
\documentclass[final]{siamltex}


 \topmargin =0mm \headheight=12mm \headsep=5mm \textheight =220mm
 \textwidth =165mm \oddsidemargin=0mm\evensidemargin =0mm
\usepackage{amsmath}
\usepackage{amssymb}
\usepackage{amsfonts}
\usepackage{graphicx}
\def\div{\operatorname{div}}

\def\curl{\operatorname{curl}}

\def\claim#1{\begin{trivlist}\item[\hskip\labelsep\bf#1]\it}
\def\endclaim{\end{trivlist}}

\numberwithin{equation}{section}


\newtheorem{Case}{Case}
\newtheorem{SCase}{Case}

 \newtheorem{remark}[theorem]{Remark}



\title{Reconstruction of interfaces from the elastic farfield measurements using CGO solutions}


\author{
Manas Kar\thanks{RICAM, Austrian Academy of Sciences,
Altenbergerstrasse 69, A-4040, Linz, Austria.
(Email:manas.kar@oeaw.ac.at)
\newline
Supported by the Austrian Science Fund (FWF): P22341-N18.} 
\and { Mourad Sini}
\thanks{RICAM, Austrian Academy of Sciences,
Altenbergerstrasse 69, A-4040, Linz, Austria.
(Email:mourad.sini@oeaw.ac.at)
\newline
Partially supported by the Austrian Science Fund (FWF): P22341-N18.}
}

\begin{document}

\maketitle

\begin{abstract}
In this work, we are concerned with the inverse scattering by interfaces for the linearized and isotropic elastic model at a fixed frequency.
First, we derive complex geometrical optic solutions with linear or spherical phases having a computable dominant part and an 
$H^\alpha$-decaying remainder term with $\alpha <3$, where $H^{\alpha}$ is the classical Sobolev space. Second, based on these properties, we estimate the convex hull as well as non convex parts of the interface using 
the farfields of only one of the two reflected body waves (pressure waves or shear waves) as measurements. The results are given for both the impenetrable obstacles, with traction boundary conditions, and the penetrable
obstacles. In the analysis, we require the surfaces of the obstacles to be Lipschitz regular and, for the penetrable obstacles, the Lam\'e coefficients to be measurable and bounded with the usual jump conditions across
the interface.
\end{abstract}

\begin{keywords} 
Scattering, elasticity, farfields, complex geometrical optic solutions, integral equations. 
\end{keywords}

\begin{AMS}
35P25, 35R30, 78A45.
\end{AMS}

\pagestyle{myheadings}
\thispagestyle{plain}
\markboth{Reconstruction of interfaces from the elastic farfield}{M. Kar and M. Sini}

\section{Introduction and statement of the result}

\label{sec1} Let $D$ be a bounded and open set of $\mathbb{R}^3$ such that $%
\mathbb{R}^3\setminus \overline{D}$ is connected. The boundary $\partial D$
of $D$ is Lipschitz. We denote by $\lambda$ and $\mu$ the Lam\'e
coefficients and $\kappa$ the frequency. We assume that those
coefficients are measurable, bounded and satisfy the conditions $\mu>0$, $2\mu +3\lambda>0$ and $\mu = \mu_0,$ $\lambda = \lambda_0$ for $x\in \mathbb{R}^3\setminus\overline{D}$ 
with $\mu_0$ and $\lambda_0$ being constants. 
In addition, we set $\lambda_D :=\lambda - \lambda_0$ and $\mu_D := \mu-\mu_0$ and assume that $2\mu_D+3\lambda_D\geq 0$ and $\mu_D>0.$\footnote{ The assumptions 
on the jumps can be relaxed. It is needed only at the vicinity of the points on the interface $\partial D$. In addition, we can also consider the case $2\mu_D+3\lambda_D\leq 0$ and $\mu_D<0$, see Remark \ref{rem}.}

The direct scattering problem can be formulated as follows. Let $u^i$ be an incident field, i,e. a vector field
satisfying $\mu_0 \Delta u^i +(\lambda_0 +\mu_0)\nabla \div u^i+ \kappa^2 u^i=0$ in $\mathbb{R}^3$ and $u^s(u^i)$ be the scattered field associated to the incident field $u^i$. 
In the impenetrable case, the scattering problem reads as follows
\begin{equation}  \label{Lame}
\begin{cases}
\mu_0 \Delta u^s +(\lambda_0 +\mu_0)\nabla \div u^s +\kappa^2 u^s =0, \; \mbox{ in }
\mathbb{R}^3\setminus \overline{D}  \\
\sigma(u^s)\cdot \nu=-\sigma(u^i)\cdot \nu,\; \mbox{ on } \partial D  \\
\lim_{\vert x \vert \rightarrow \infty}\vert x \vert (\frac{\partial u_{p}^{s}}{%
\partial {\vert x \vert}}-i\kappa_p u_{p}^{s})=0 \mbox{ and } \lim_{\vert x \vert
\rightarrow \infty}\vert x \vert (\frac{\partial u_{s}^{s}}{\partial {\vert x \vert%
}}-i\kappa_s u_{s}^{s})=0,
\end{cases}
\end{equation}
where the last two limits are uniform in all the directions $\hat{x}:=\frac{x}{\vert x \vert} \in
\mathbb{S}^2$ with $\sigma(u^s)\cdot \nu:=(2\mu \partial \nu+\lambda \nu \div
+\mu \nu \times \curl)u^s$ and the unit normal vector $\nu$ is directed into
the exterior of $D$.
 In the penetrable obstacle case, the total field $u^t :=u^s + u^i$ satisfies
\begin{equation}  \label{Lame_Penetra}
\begin{cases}
\nabla\cdot(\sigma(u^t)) + \kappa^2 u^t = 0, \; \mbox{ in } \mathbb{R}^3  \\
\lim_{\vert x \vert \rightarrow \infty}\vert x \vert (\frac{\partial u_{p}^{s}}{%
\partial {\vert x \vert}}-i\kappa_p u_{p}^{s})=0 \mbox{ and } \lim_{\vert x \vert
\rightarrow \infty}\vert x \vert (\frac{\partial u_{s}^{s}}{\partial {\vert x \vert%
}}-i\kappa_s u_{s}^{s})=0.
\end{cases}
\end{equation}
The two limits in \eqref{Lame} and \eqref{Lame_Penetra} are called the Kupradze radiation conditions.
For any displacement field $v$, taken as a column vector, the corresponding stress tensor $\sigma(v)$ can be represented as a $3\times 3$ matrix:
$
 \sigma(v) = \lambda(\nabla\cdot v)I_3 + 2\mu\epsilon(v),
$
where $I_3$ is the $3\times 3$ identity matrix and $\epsilon(v) = \frac{1}{2}(\nabla v + (\nabla v)^T)$ denotes the infinitesimal strain tensor. Note that for $v = (v_1,v_2,v_3)^T$, $\nabla v$ denotes the $3\times 3$ matrix whose
$j$-th row is $\nabla v_j$ for $j= 1,2,3.$ Also for a $3\times 3$ matrix function $A$, $\nabla\cdot A$ denotes the column vector whose $j$-th component is the divergence of the $j$-th row of $A$ for $j=1,2,3.$ \\
In both \eqref{Lame} and \eqref{Lame_Penetra}, we denoted $u_{p}^{s}:=-\kappa^{-2}_p \nabla
\div\; u^s$ to be the longitudinal (or the pressure) part of the field $u^s$ and
$u_{s}^{s}:=\kappa^{-2}_s \mathrm{curl \;} \mathrm{curl \;}\; u^s$ to be the
transversal (or the shear) part of the field $u^s$. The constants $\kappa_p:=%
\frac{\kappa}{\sqrt{2\mu_0+\lambda_0}}$ and $\kappa_s:=\frac{\kappa}{\sqrt{\mu_0}}$
are known as the longitudinal and the transversal wave numbers respectively. From the first equality of \eqref{Lame} (and equally the one of \eqref{Lame_Penetra} for $x\in\mathbb{R}^3\setminus\overline{D}$),
 we obtain the well known decomposition of the scattered field $u^s$ as the sum of its
longitudinal and transversal parts, i.e. $u^s=u_{p}^{s}+u_{s}^{s}$. It is well known that
the scattering problems (\ref{Lame}) and \eqref{Lame_Penetra} are well posed, see for instance (\cite%
{H-S},\cite{Kup1}, \cite{Kup2}).

The scattered field $u^s$ has 
the following asymptotic expansion at infinity:
\begin{equation}
u^s(x):=\frac{e^{i\kappa_p \vert x\vert}}{\vert x \vert} u^\infty_p(\hat{x})+%
\frac{e^{i\kappa_s \vert x\vert}}{\vert x \vert} u^\infty_s(\hat{x})+ O(%
\frac{1}{\vert x \vert^2}), \; \vert x \vert \rightarrow \infty
\end{equation}
uniformly in all the directions $\hat{x}\in \mathbb{S}^2$, see
\cite{A-K} for instance.
The fields $u^\infty_p(\hat{x})$ and $u^\infty_s(\hat{x})$ defined on $%
\mathbb{S}^2$ are called correspondingly the longitudinal and transversal
parts of the far field pattern. The longitudinal part $u^\infty_p(\hat{x})$
is normal to $\mathbb{S}^2$ while the transversal part $u^\infty_s(\hat{x})$
is tangential to $\mathbb{S}^2$. It is well known that scattering problems
in linear elasticity occur when we excite one of the two types of incident
plane waves, pressure (or longitudinal) waves and shear (or transversal)
waves. They have the analytic forms $u^p_i(x, d):=d e^{i\kappa_p d\cdot x}$
and $u^s_i(x, d):=d^{\perp}e^{i\kappa_s d\cdot x}$ respectively, where $d^{\perp}$
is any vector in $\mathbb{S}^2$ orthogonal to $d$. Remark that $%
u^p_i(\cdot, d)$ is normal to $\mathbb{S}^2$ and $u^s_i(\cdot, d)$ is
tangential to $\mathbb{S}^2$. As in earlier studies of problems
in elasticity we use superpositions of incident pressure and shear waves
given by
\begin{equation}  \label{plane-incident-waves}
u_i(x, d):=\alpha d e^{i\kappa_p d\cdot x}+\beta d^{\perp}e^{i\kappa_s
d\cdot x}\;
\end{equation}
where $\alpha, \beta \in \mathbb{C}$, $d\in \mathbb{S}^2$.

We denote by $u^{\infty}(\cdot, d, \alpha, \beta)$ the far field pattern
associated with the incident waves of the form (\ref{plane-incident-waves}).
We also denote by $u_p^{\infty}(\cdot, d, \alpha, \beta)$ and $%
u_s^{\infty}(\cdot, d, \alpha, \beta)$ the corresponding longitudinal and
transversal parts of the far field $u^{\infty}(\cdot, d, \alpha, \beta)$,
i.e.
$
u^{\infty}(\cdot, d, \alpha, \beta)=(u_p^{\infty}(\cdot, d, \alpha, \beta),
u_s^{\infty}(\cdot, d, \alpha, \beta)).
$
Hence, we obtain the far field measurements
\begin{equation}  \label{Farfield-matrix}
 (u^p_i, u^s_i)\mapsto F(u^p_i, u^s_i):=\left[%
\begin{array}{ccc}
u_p^{\infty, p}(\cdot, d) & u_p^{\infty, s}(\cdot, d)   \\
u_s^{\infty, p}(\cdot, d) & u_s^{\infty, s}(\cdot, d)
\end{array}%
\right]
\end{equation}
where:

1. $(u_p^{\infty, p}(\cdot, d), u_s^{\infty, p}(\cdot, d))$ is the far field
pattern associated with the pressure incident field $u^p_i(\cdot, d)$.

2. $(u_p^{\infty, s}(\cdot, d), u_s^{\infty, s}(\cdot, d))$ is the far field
pattern associated with the shear incident field $u^s_i(\cdot, d)$. 
\par 
Our concern now is to investigate the following \textbf{\texttt{geometrical
inverse problem:}}

\textit{{From the knowledge of $u^{\infty}(\cdot, d, \alpha, \beta)$ for all
directions $\hat{x}$ and $d$ in $\mathbb{S}^2$ and a couple $(\alpha,\beta)\neq (0, 0)$ in $\mathbb{C}$, determine $D$.}} 
\newline
We can also restate this problem as follows: \textit{{From the knowledge of
the matrix (\ref{Farfield-matrix}) for all directions $\hat{x}$ and $d$ in $%
\mathbb{S}^2$ determine $D$.}}

The first uniqueness result was proved by Hahner and Hsiao, for the model \eqref{Lame}, see \cite{H-S}. It says that every \textit{column} of the
matrix (\ref{Farfield-matrix}) for all directions $\hat{x}$ and $d$ in $%
\mathbb{S}^2$, determines $D$. Later Alves and
Kress \cite{A-K}, Arens \cite{A}, A. Charalambopoulos, D. Gintides
and K. Kiriaki \cite{C-G-K1, C-G-K2, G-K} proposed sampling types methods to solve the inverse problem using the full matrix
(\ref{Farfield-matrix}) for all directions $\hat{x}$ and $d$ in $\mathbb{S}
^2 $. We also mention the works by Guzina and his collaborators using the full near fields \cite{B-G-C-M, G-M, N-G}. We remark that one not only needs the information over all directions of
incidence and measurements, but also both pressure and shear far fields
are necessary. In recent works we proved that it is possible to reduce
the amount of data for detecting $D$ as follows: \\ 
\begin{it}
The knowledge of $u_p^{\infty}(\hat{x}, d, \alpha,
\beta)$ (or respectively $u_s^{\infty}(\hat{x}, d, \alpha, \beta)$) for all
directions $\hat{x}$ and $d$ in $\mathbb{S}^2$ and the couple $(\alpha,
\beta)=(1, 0)$ or $(\alpha, \beta)=(0, 1)$ uniquely determines the obstacle $
D $.
\end{it} \\
In other words, this result says that every \textit{component}
of the matrix (\ref{Farfield-matrix}) for all directions $\hat{x}$ and $d$
in $\mathbb{S}^2$, determines $D$.  In \cite{G-S}, we assumed a $C^4$-regularity of $\partial D$ to prove this result for the
impenetrable obstacle with free boundary conditions, the model \eqref{Lame}.  This $C^4$-regularity is used to derive explicitly the first order term in the asymptotic of the
indicator functions of the probe or singular sources method in terms of the source points.
This regularity is reduced to Lipschitz in \cite{K-S} for both the models \eqref{Lame} and \eqref{Lame_Penetra}, see also \cite{H-K-S} for the rigid obstacles. \\
It is known that these probe/singular sources methods are based on the use of approximating domains isolating the source point of the used point sources, see \cite{Potthast}. 
Since this source point has to move around and near the interface, it creates extra instabilities of the method. 
To overcome this difficulty, Ikehata \cite{Ikeha} proposed the enclosure method  which has the same principle as the probe/singular sources 
methods but instead of using point sources, he uses complex geometric optics type solutions with linear phases, CGOs in short. 
The price to pay is that, in contrast to the point sources with which we obtain the whole interface $\partial D$, we can only obtain the 
intersection of the level-curves of the CGOs with $\partial D$. In the case of linear phases, we can estimate the convex hull, see \cite{Ikeha2} 
for an overview of this method. Later, in the work \cite{K-S-U} by Kenig, Sj{\"o}strand and Uhlmann, other CGOs have been proposed to solve the EIT problem using the localized Dirichlet-Neumann map.
These CGOs have a phase of quadratic form, i.e. behaving as spherical waves. Inspired from these CGOs, Nakamura and Yoshida proposed in \cite{N-Y} 
an enclosure method based on CGOs with spherical waves with which they could estimate, in addition to the convex hull of $D$, some non convex parts of $\partial D$. 
Another family of CGOs is proposed by Uhlmann and Wang \cite{U-W-2} where the phases are the harmonic polynomials in the 2D case. With these CGO, one can recover all the visible 
part, by straight rays, of the interface $\partial D$. We refer the reader to \cite{D-K-S-U} for a classification of these CGOs. 
Regarding the Lam\'e model, the CGOs with linear or spherical phases are constructed in \cite{U-W}, for the stationary case, i.e. $k=0$, while in \cite{Ku} the ones with harmonic 
polynomial phases in the 2D case are investigated. The general form of these CGO's is
\begin{equation}\label{general-form}
 u:=e^{-\tau (\phi+i\psi)}(a +r)
\end{equation}
where $\phi$ is the known and explicit phase we were talking about, $\psi$ is also explicitly known, 
$a$ is either known explicitly or computable and the remainder $r$ is small in the classical Sobolev $H^{\alpha}-$norm, $\alpha <1$, in term of the parameter $\tau$, 
i.e. precisely one has the estimate $\Vert r\Vert_{H^{\alpha}}=O(\tau^{\alpha-1})$, $\alpha \leq 1$.

In our work, we use $p$-parts or $s$-parts of the farfield measurements. The CGOs of the form (\ref{general-form}) are not enough, in particular when 
we use mixed measurements, i.e. $p$-incident (respectively, $s$-incident) waves and $s$-parts (respectively, $p$-parts) of the corresponding farfield patterns.
Instead, we construct CGOs of the form  
\begin{equation}\label{general-form1}
 u:=e^{-\tau (\phi+i\psi)}(a_0+\tau^{-1}a_1 +a_2\tau^{-2} +r),
\end{equation}
with linear or logarithmic phases,
where now $a_0, a_1$ and $a_2$ are either known explicitly or computable and the remainder $r$ is small with the $H^{\alpha}-$norm, $\alpha <3$, 
in term of the parameter $\tau$, i.e. $\Vert r\Vert_{H^{\alpha}}=O(\tau^{\alpha-3})$, $\alpha \leq 3$. Having these CGOs at hand, we state  
the indicator function of the  enclosure method directly from the farfield measurements and use only one of the two body waves (pressure or shear waves). 
Then we justify the enclosure method with no geometrical assumptions on the interface $\partial D$. The analysis is based
on the use of integral equation methods on Sobolev spaces $H^s(\partial D), s \in \mathbb{R}$, for the impenetrable case and $L^p$ estimates of 
the gradients of the solutions of the Lam\'e system with discontinuous Lam\'e coefficients for the penetrable case. This is a generalization to the Lam{\'e} system of the previous works 
\cite{K-S-2} and \cite{S-Y} concerning the Maxwell and acoustic cases respectively.

The paper is organized as follows. In section 2, we define the indicator functions via the farfield pattern. Then using the denseness property of the Herglotz wave functions and the well posedness of the forward problem, 
we link the far field measurements with the CGO solutions. In section 3, the construction of the CGOs is 
discussed while in section 4, we state the main theorem and describe the reconstruction scheme.
In section 5, we prove the main theorem and postpone to section 6 and the appendix, the justification of the needed estimates for the CGOs. 

\section{The indicator functions linking the used farfield parts to the CGOs}
In this section, we follow the procedure of \cite{K-S-1} where we showed the link between farfield and CGOs in the scalar Helmholtz case. So, we start with the following
identity, see for instance Lemma 3.1 in \cite{A-K}:
\begin{equation}\label{Farfield-Nearfield}
\int_{\partial D} \left( U\cdot \overline{\sigma(w_h)\cdot \nu} -\overline{%
w_h} \cdot \sigma(U) \cdot \nu \right) ds(x)=4\pi\int_{\mathbb{S}%
^2}\left(U^{\infty}_p\overline{h_p}+U^{\infty}_s\overline{h_s} \right)ds(\hat x)
\end{equation}
for all radiating fields $U$ and $w_h$ where $w_h$ is the scattered field
associated with the Herglotz field $v_h$ for $h=(h_p, h_s) \in L^2_p(\mathbb{S}%
^2)\times L^2_s(\mathbb{S}^2)$, i.e. $v_h(x):=\int_{\mathbb{S}^2}[e^{i\kappa_p x
\cdot d}h_p(d) +e^{i\kappa_s x \cdot d}h_s(d)]ds(d)$
with $L_{p}^{2}(\mathbb{S}^2) := \{U\in (L^2(\mathbb{S}^2))^3; U(d)\times d=0\}$ while $L_{s}^{2}(\mathbb{S}^2) := \{U\in (L^2(\mathbb{S}^2))^3; U(d)\cdot d=0\}.$\\
Let $v$ be a CGO solution for the Lam{\'e} system and state $\Omega :=\Omega_{cgo}$ to be its domain of definition with $D\subset\subset\Omega$. Examples of these
CGOs will be described in Section 3.
We take its $p$-part $v_p$ and its $s$-part $v_s$. We can find sequences of densities $(h^n_p)_n$ and $(h^n_s)_n$ such that the Herglotz waves
$v_{h^n_p}$ and $v_{h^n_s}$ converge to $v_p$ and $v_s$ respectively on any domain $\tilde\Omega$ containing $D$ and contained in $\Omega$.
These sequences can be obtained as follows. \\
We define $H : (L^2(\mathbb{S}^2))^3 \rightarrow (L^2(\partial\Omega))^3$ as 
$(Hg)(x) := v_g(x).$ We know that $H$ is injective and has a dense range if $\kappa^2$ is not an eigenvalue of the Dirichlet-Lam{\'e} operator on $\Omega$. Due to the monotonicity of these eigenvalue 
in terms of the domains, we change, if needed, $\Omega$ slightly so that $\kappa^2$ is not an eigenvalue anymore. Hence, we can find a sequence $g_n\in(L^2(\mathbb{S}^2))^3$ such that
$Hg_n \rightarrow v$ in $(L^2(\partial\Omega))^3$. Recall that both $Hg_n$ and $v$ satisfy the interior Lam{\'e} problem. By the well-posedness of the interior problem and the interior estimates,
we deduce that $Hg_n \rightarrow v$ in $C^{\infty}(\tilde\Omega)$, since $D\subset\subset\tilde\Omega\subset\subset\Omega.$
Hence, $-\kappa_{p}^{-2}\nabla\nabla\cdot Hg_n \rightarrow v_p$ and $\kappa_{s}^{-2}\curl\curl Hg_n \rightarrow v_s$ in $C^{\infty}(\tilde\Omega).$
But $-\kappa_{p}^{-2}\nabla\nabla\cdot Hg_n = Hh_{p}^{n}$ and $\kappa_{s}^{-2}\curl\curl Hg_n = Hh_{s}^{n}$, where $h_{p}^{n} := d(d\cdot g_n)$ and $h_{s}^{n} := -d\wedge(d\wedge g_n)$. \\
We set $u^s(v_s)$ to be the scattered field corresponding to the $s$ incident wave $v_s$. The $p$ and $s$ parts of the scattered field $u^s(v_s)$ are $u_{p}^{s}(v_s)$ and $u_{s}^{s}(v_s)$ respectively. 
Similarly, we set $u^s(v_p)$ to be the scattered field corresponding to the $p$ incident wave $v_p$. The $p$ and $s$ parts of the scattered field $u^s(v_p)$ are $u_{p}^{s}(v_p)$ and $u_{s}^{s}(v_p)$ respectively.
\subsection{Using longitudinal waves}
Let $(u_{p}^{\infty,p},u_{s}^{\infty,p})$ be the farfield associated to the incident field $de^{i\kappa_pd\cdot x}.$
By the principle of superposition, the farfield associated to the incident field
$
v_g(x):=\int_{\mathbb{S}^2}de^{i\kappa_p d \cdot x}(d\cdot g(d))ds(d)
$
is given by
\begin{equation*}
u_g^{\infty}(\hat x):=(u_g^{\infty, p}(\hat x), u_g^{\infty,
s}(\hat x))=(\int_{\mathbb{S}^2}u_p^{\infty,p}(\hat x, d)(d\cdot g(d))ds(d), \int_{%
\mathbb{S}^2}u_s^{\infty,p}(\hat x, d)(d\cdot g(d))ds(d))
\end{equation*}
where each component is a vector. Replacing in (\ref{Farfield-Nearfield}), using the fact that $Hh_{p}^{n}$ converges to $v_p$ in $C^{\infty}(\tilde\Omega)$, with $D\subset\subset\tilde\Omega$, the trace theorem and the well-posedness of the scattering problem, we obtain:
\begin{align}\
&4\pi \lim_{m,n\rightarrow \infty}\int_{\mathbb{S}^2}\int_{\mathbb{S}%
^2}[u_p^{\infty,p}(\hat x, d)d\cdot g_n(d)]\cdot [\hat x
(\hat x\cdot \overline{g_m(\hat x)})]ds(\hat x)ds(d)&\hspace{2cm} \nonumber\\
&=\int_{\partial D}[u^s(v_p)\cdot (\overline{\sigma(v_p)}\cdot \nu)-\overline{v_p}\cdot(\sigma(u^s(v_p))\cdot \nu)]ds(x)&\hspace{2cm}
\label{log1}\\ 
\text{and}\hspace{2cm}& 4\pi\lim_{m,n\rightarrow \infty}\int_{\mathbb{S}^2}\int_{\mathbb{S}%
^2}[u^{\infty,p}_s(\hat x, d)d\cdot g_n(d)]\cdot [\hat x
\wedge(\hat x\wedge\overline{g_m(\hat x)})]ds(\hat x)ds(d)&\hspace{2cm} \nonumber\\
&=\int_{\partial D}[u^s(v_p)\cdot (\overline{\sigma(v_s)}\cdot \nu)-\overline{v_s}\cdot(\sigma(u^s(v_p))\cdot \nu)]ds(x).&\hspace{2cm}\label{log2}
\end{align}
\subsection{Using shear incident waves}
Let $i_1$ and $i_2$ be two vectors linearly independent and tangent to $\mathbb{S}^2$. Then $d\wedge(d\wedge i_1)$ and $d\wedge(d\wedge i_2)$ are obviously also tangent to $\mathbb{S}^2$ and in addition
they are linear independent. 
Indeed,
$
 \alpha_1d\wedge(d\wedge i_1) + \alpha_2d\wedge(d\wedge i_2) = 0 
  \Leftrightarrow d\wedge(d\wedge(\alpha_1i_1 + \alpha_2i_2)) = 0 
 \Leftrightarrow\alpha_1 = \alpha_2 = 0.
$
Let now $u_{g}^{\infty}(\hat x, d):= (u_p^{\infty,s}(\hat x, d), u_s^{\infty,s}(\hat x, d))$ be the farfield associated with the incident plane wave $d\wedge(d\wedge i_j)e^{i\kappa_s x\cdot d}, j= 1,2.$
Hence the farfield associated with the Herglotz wave
$
 v_g(x) := \int_{\mathbb{S}^2}d\wedge(d\wedge g)e^{i\kappa_sx\cdot d}ds(d) 
$
 is
\[
 v_{g}^{\infty}(\hat x) := \sum_{j=1}^{2}\left(\int_{\mathbb{S}^2}u_{p}^{\infty,s}(\hat x, d)(i_j\cdot g)ds(d), \int_{\mathbb{S}^2}u_{s}^{\infty,s}(\hat x, d)(i_j\cdot g)ds(d)\right),
\]
since 
$
 d\wedge(d\wedge g) = d\wedge(d\wedge i_1)(i_1\cdot g) + d\wedge(d\wedge i_2)(i_2\cdot g).
$
Hence,
\begin{eqnarray}\label{she1}
 &4\pi\lim_{m,n\rightarrow\infty}\sum_{j=1}^{2}\int_{\mathbb{S}^2}\int_{\mathbb{S}^2}[u_{s}^{\infty,s}(\hat x, d)(i_j\cdot g_n)(d)]\cdot[\hat x\wedge\hat x\wedge \overline{g_m(\hat x)}]ds(\hat x)ds(d)\nonumber\\
& = \int_{\partial D} [u^s(v_s)\cdot (\overline{\sigma(v_s)}\cdot\nu)
-\overline{v_s}\cdot(\sigma(u^s(v_s))\cdot\nu)]ds(x), 
\end{eqnarray}
\begin{eqnarray}\label{she2}
\text{and}\hspace{2cm} &4\pi\lim_{m,n\rightarrow\infty}\sum_{j=1}^{2}\int_{\mathbb{S}^2}\int_{\mathbb{S}^2}[u_{p}^{\infty,s}(\hat x, d)(i_j\cdot g_n)(d)]\cdot[\hat x(\hat x\cdot \overline{g_m(\hat x)})]ds(\hat x)ds(d) \nonumber\\
&= \int_{\partial D}[u^s(v_s)\cdot (\overline{\sigma(v_p)}\cdot\nu)
-\overline{v_p}\cdot(\sigma(u^s(v_s))\cdot\nu)]ds(x).&\hspace{2cm}
\end{eqnarray}
\begin{subsection}{\textbf{The indicator functions}}\label{indi_sub}
We set
\begin{eqnarray}\label{PP}
I_{pp}&:=&4\pi\lim_{m,n\rightarrow \infty}\int_{\mathbb{S}^2}\int_{\mathbb{S}
^2}[u_p^{\infty,p}(\hat x, d)d\cdot g_n(d)]\cdot [\hat x
(\hat x\cdot \overline{g_m(\hat x)})]ds(\hat x)ds(d),
\\ \label{PS}
I_{ps}&:=&4\pi\lim_{m,n\rightarrow \infty}\int_{\mathbb{S}^2}\int_{\mathbb{S}%
^2}[u^{\infty,p}_s(\hat x, d)d\cdot g_n(d)]\cdot [\hat x
\wedge(\hat x\wedge \overline{g_m(\hat x)})]ds(\hat x)ds(d),
 \\ \label{SS}
I_{ss}&:=&4\pi\lim_{m,n\rightarrow\infty}\sum_{j=1}^{2}\int_{\mathbb{S}^2}\int_{\mathbb{S}^2}[u_{s}^{\infty,s}(\hat x, d)(i_j\cdot g_n)(d)]\cdot[\hat x\wedge\hat x\wedge \overline{g_m(\hat x)}]ds(\hat x)ds(d), 
\\ \label{SP} 
\text{and}\hspace{2cm} I_{sp}&:=&4\pi\lim_{m,n\rightarrow\infty}\sum_{j=1}^{2}\int_{\mathbb{S}^2}\int_{\mathbb{S}^2}[u_{p}^{\infty,s}(\hat x, d)(i_j\cdot g_n)(d)]\cdot[\hat x(\hat x\cdot \overline{g_m(\hat x)})]ds(\hat x)ds(d).\hspace{2cm}
\end{eqnarray}
\end{subsection}
Therefore, the indicator function $I_{pp}$ is defined based on $p$-parts of the far field associated to $p$-incident wave. Correspondingly $I_{ps}$ depends on $s$-part of the far field associated to $p$-incident wave, 
$I_{ss}$ depends on $s$-part of the far field associated to the $s$-incident wave and finally $I_{sp}$ depends on $p$-part of the far field associated to the $s$-incident wave.
\begin{section}{Construction of CGO solutions}\label{ppssty}
\begin{subsection}{CGO solutions for $I_{pp}$ and $I_{ss}$}\label{cgo_PP}
Let us assume $u$ to be a solution for the following Lam\'e system
\begin{equation}\label{CGO_1}
  \mu_0\Delta u + (\lambda_0 + \mu_0)\nabla \div u + \kappa^2 u = 0.
\end{equation}
Applying the identity $\curl\curl = \nabla\nabla\cdot - \Delta$ in \eqref{CGO_1}, we have
\[
  u
 = - \frac{\lambda_0 + 2\mu_0}{\kappa^2} \nabla\nabla\cdot u + \frac{\mu_0}{\kappa^2} \curl\curl u 
   =: u_p + u_s 
\]
where $u_p$ and $u_s$ are the $p$-part and $s$-part of the solution $u$ respectively.
\subsubsection{$p$-type CGOs for $I_{pp}$}\label{subpp}
Taking $\nabla\cdot$ in both sides in \eqref{CGO_1}, we obtain
\[
 \mu_0\Delta(\nabla\cdot u) + (\lambda_0 + \mu_0)\Delta(\nabla\cdot u) = - \kappa^2(\nabla\cdot u).
\]
Define $U := \nabla\cdot u$. Therefore $U$ satisfies the Helmholtz equation
$
 \Delta U + \kappa_{p}^{2} U = 0.
$
Hence the $p$-part of $u$ is $-\frac{\lambda_0 + 2\mu_0}{\kappa^2} \nabla U$. This suggest to take the CGO solution for the Lam\'e system 
of the form
\begin{equation}\label{pp}
 \nabla U
\end{equation}
 where
$U$ is the CGO solution for scalar Helmholtz equation, i.e, $U$ satisfies 
\begin{equation}\label{pHel}
 \Delta U + \kappa_{p}^{2} U = 0.
\end{equation}
The resulting vector field $\nabla U$, in \eqref{pp}, is also a solution of \eqref{CGO_1} and it is of $p$-type since its $s$-part is zero. 
\subsubsection{$s$-type CGOs for $I_{ss}$}\label{cgo_SS}
  Define $V =: \curl u$, where $u$ satisfies \eqref{CGO_1}. It satisfies the vector Helmholtz equation
\begin{equation}\label{sHel}
 \Delta V + \kappa_{s}^{2} V = 0.
\end{equation}
The $s$-part of the solution $u$ is $\frac{\mu_0}{\kappa^2}\curl V.$ This suggests to take 
the CGO solution for the Lam\'e system of the form
\begin{equation}\label{ss}
 \curl V.
\end{equation}
The resulting vector field $\curl V$, in \eqref{ss}, is also a solution of \eqref{CGO_1} and it is of $s$-type since its $p$-part is zero.
\subsubsection{The forms of the corresponding CGOs}
In the following proposition, we provide the forms of the CGOs for the Helmholtz type equations in \eqref{pHel} and \eqref{sHel}. The CGOs we use for $I_{pp}$ and $I_{ss}$ are then deduce using \eqref{pp} and \eqref{ss} respectively.
\begin{proposition}\label{ll}
 \begin{enumerate}
  \item 
(Linear phase.)\\
Let $\rho,\rho^{\perp}\in \mathbb{S}^2,$ with $\rho\cdot\rho^{\perp}=0$ and $t,\tau>0.$ The function
$
 U(x;\tau,t) := e^{\tau(x\cdot\rho-t)+i\sqrt{\tau^2+\kappa_{l}^{2}}x\cdot\rho^{\perp}}
$
satisfies $(\Delta + \kappa_{l}^{2})U=0$ in $\mathbb{R}^3,$ for $l=s$ or $l=p.$
\item
(Logarithmic phase.)\\
Let $\Omega$ be a $C^2$-smooth domain and set $ch(\Omega)$ to be its convex hull.
Choose $x_0 \in\mathbb{R}^3\setminus\overline{ch(\Omega)}$ and let $\omega_0\in \mathbb{S}^2$ be a vector such that
$
 \{x\in\mathbb{R}^3 ; x-x_0 = \lambda\omega_0, \lambda\in \mathbb{R}\}\cap \partial\Omega = \emptyset.
$
Then there exists a solution of $(\Delta + \kappa_{l}^{2})U=0$ of the following form
\begin{equation}\label{keti}
U(x;\tau,t) := e^{\tau(t-\log\vert x-x_0\vert)-i\tau\psi(x)}(a_0 + \tau^{-1}a_1 + r)
\end{equation}
with $a_0$ and $a_1$ are smooth and computable functions which depend on $\kappa_l, l=s$ or $l=p,$ and $r$ has the following behavior
\begin{equation}\label{es_r}
  \Vert r\Vert_{H_{scl}^{2}(\Omega)} \leq C\tau^{-2}, \ \text{$C>0$ is a universal constant,}\
\end{equation}
where $H_{scl}^{2}(\Omega)$ is the semi-classical Sobolev space defined by
$
 H_{scl}^{2}(\Omega) := \{V\in L^2(\Omega)/ (\tau^{-1}\partial)^{\alpha}V\in L^2(\Omega), |\alpha|\leq 2\},
$
equipped with the norm
$
\|V\|_{H_{scl}^{2}(\Omega)}^{2} :=\sum_{|\alpha|\leq 2}\|(\tau^{-1}\partial)^{\alpha}V\|_{L^2(\Omega)}^{2}.
$
From \eqref{es_r}, we have in particular
\begin{equation}\label{winkl}
  \Vert r\Vert_{H^{s}(\Omega)} \leq C\tau^{-(2-s)}, \ 0 \leq s \leq 2.
\end{equation}
\end{enumerate}
\end{proposition}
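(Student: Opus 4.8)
The plan is to treat the two cases separately, as they are genuinely different in character. For the linear-phase case (part 1), the argument is a direct computation: writing $U(x;\tau,t) = e^{\tau(x\cdot\rho-t)} e^{i\sqrt{\tau^2+\kappa_l^2}\,x\cdot\rho^\perp}$, we note that $U = e^{-t\tau} e^{\zeta\cdot x}$ with the complex vector $\zeta := \tau\rho + i\sqrt{\tau^2+\kappa_l^2}\,\rho^\perp$. Since $e^{\zeta\cdot x}$ is an eigenfunction of the Laplacian with eigenvalue $\zeta\cdot\zeta$, it suffices to check that $\zeta\cdot\zeta = -\kappa_l^2$. Using $\rho\cdot\rho = 1$, $\rho^\perp\cdot\rho^\perp = 1$ and $\rho\cdot\rho^\perp = 0$, we get $\zeta\cdot\zeta = \tau^2 - (\tau^2+\kappa_l^2) = -\kappa_l^2$, hence $(\Delta+\kappa_l^2)U = 0$ in all of $\mathbb{R}^3$. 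This is essentially a one-line verification and presents no obstacle.

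For the logarithmic-phase case (part 2), the plan is to follow the now-standard Kenig–Sjöstrand–Uhlmann-type construction of complex geometric optics solutions with a Carleman-weight phase, adapted to the positive lower-order term $\kappa_l^2$. First I would recall that $\varphi(x) := -\log|x-x_0|$ is a (limiting) Carleman weight for the Laplacian on any domain not containing $x_0$, and that after a change of variables centered at $x_0$ one may pair $\varphi$ with a suitable conjugate phase $\psi$ solving the eikonal equation $|\nabla\varphi|^2 = |\nabla\psi|^2$, $\nabla\varphi\cdot\nabla\psi = 0$; the hypothesis that the ray $\{x_0+\lambda\omega_0\}$ misses $\partial\Omega$ guarantees that $\psi$ can be chosen smooth on a neighborhood of $\overline\Omega$. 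Then I would substitute the ansatz $U = e^{\tau(t-\log|x-x_0|)-i\tau\psi}(a_0+\tau^{-1}a_1+r)$ into $(\Delta+\kappa_l^2)U=0$ and conjugate: setting $P_\tau := e^{-\tau(\varphi+i\psi)}\circ(\Delta+\kappa_l^2)\circ e^{\tau(\varphi+i\psi)}$ and expanding in powers of $\tau$, the leading $O(\tau^2)$ term vanishes by the eikonal equation, the $O(\tau)$ term is a first-order transport equation for $a_0$ along the integral curves of the relevant vector field, which I solve explicitly (this is where the explicit/computable nature of $a_0$ comes from, and the dependence on $\kappa_l$ enters through the zeroth-order contribution); similarly the next order gives a transport equation for $a_1$ with a right-hand side built from $a_0$, again solvable explicitly. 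The remainder $r$ is then required to satisfy $P_\tau r = f_\tau$ with $\|f_\tau\|_{L^2} = O(\tau^{-1})$ (what is left after cancelling the $a_0, a_1$ terms), and one invokes the standard Carleman estimate for the conjugated operator — of the form $\tau\|v\|_{H^1_{scl}} \lesssim \|P_\tau v\|_{L^2}$, valid for $\tau$ large since $\kappa_l^2$ is a bounded zeroth-order perturbation absorbed into the estimate — to solve for $r$ with the gain $\|r\|_{H^1_{scl}} = O(\tau^{-2})$; elliptic regularity for the conjugated operator then upgrades this to the $H^2_{scl}$ bound \eqref{es_r}. Finally \eqref{winkl} follows from \eqref{es_r} by interpolation, since $\|r\|_{H^s} \leq \tau^s\|r\|_{H^s_{scl}} \leq \tau^s\|r\|_{H^2_{scl}}^{s/2}\|r\|_{L^2}^{1-s/2}$ combined with $\|r\|_{H^2_{scl}} = O(\tau^{-2})$ and the $L^2$ part, giving $O(\tau^{-(2-s)})$.

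The main obstacle I expect is bookkeeping rather than conceptual: getting the conjugated operator $P_\tau$ expanded cleanly into its $O(\tau^2)$, $O(\tau)$, $O(1)$, $O(\tau^{-1})$ pieces, identifying precisely the transport operator and verifying that $a_0$ and $a_1$ are not merely existent but explicitly computable (which requires a convenient parametrization of the characteristics of $\nabla\varphi - i\nabla\psi$, most transparent in polar-type coordinates adapted to $x_0$), and carrying the $\kappa_l^2$ term correctly through all orders. The Carleman estimate itself and the interpolation step are routine once the right semiclassical functional framework is fixed; the delicate accounting is in the two transport equations and in checking that the leftover source for $r$ really is only $O(\tau^{-1})$ in $L^2$ after the two correction terms are subtracted.
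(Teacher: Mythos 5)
Your proposal is correct and follows essentially the same route as the paper, whose own ``proof'' of Proposition \ref{ll} is merely a citation (the linear phase to Ikehata, the logarithmic phase to Kenig--Sj\"ostrand--Uhlmann for $\kappa=0$, and to Sini--Yoshida for the refined $H^{2}_{scl}$ remainder estimate \eqref{es_r}). Your direct verification $\zeta\cdot\zeta=-\kappa_l^2$ for the linear phase, and the WKB expansion with eikonal/transport equations for $a_0,a_1$, Carleman-estimate solvability for $r$, and semiclassical interpolation yielding \eqref{winkl}, are exactly the content of those cited constructions.
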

\begin{proof}
 The CGOs with linear phases are given in \cite{Ikeha}. The CGOs with log-phases are given in \cite{K-S-U}
for $\kappa=0$ of the form 
$
 e^{\tau(t-\log\vert x-x_0\vert)-i\tau\psi(x)}(a_0 + r)
$
and 
$
 \|r\|_{H_{scl}^{1}(\Omega)}\leq c\tau^{-1}.
$
The CGOs stated in this proposition, with the corresponding estimate \eqref{es_r} of the remainder term, are given in \cite{S-Y}.
\end{proof}
\end{subsection}
\begin{subsection}{CGO solutions for $I_{sp}$ and $I_{ps}$}\label{cgo_SPPS}
The natural CGOs introduced in section \ref{subpp} are useful for $I_{pp}$. However, with such functions as incident waves the other indicator functions $I_{ps}, I_{ss}$ and $I_{sp}$ 
vanish, see \eqref{log2}, \eqref{she1} and \eqref{she2} respectively, since the $s$-part, $v_s$, of the
CGO solution is nulle, and hence are not useful.  Similarly, the CGOs constructed in section \ref{cgo_SS} are useful for $I_{ss}$ but not for
$I_{pp}$, $I_{sp}$ and $I_{ps}$. To construct CGOs useful for $I_{sp}$ and $I_{ps}$, we need to consider the full Lam{\'e}
 system, i.e. solutions with non vanishing $p$ and $s$ parts. To construct such CGOs, we follow the approach by Uhlmann-Wang \cite{U-W}. 
The non-divergence form of the isotropic elasticity system can be written as
\begin{equation}\label{4.1.3.1}
 \mu_0\Delta u + (\lambda_0 + \mu_0)\nabla(\nabla\cdot u) + \kappa^2 u = 0 \ \ \text{on} \ \Omega.
\end{equation}
Let 
 $ W = \left
     (\begin{array}{c} 
               w \\ 
               g
     \end{array}
      \right)$
   satisfy 
\begin{equation}\label{4.1.3.2}
PW := \Delta \left
     (\begin{array}{c} 
               w \\ 
               g
     \end{array}
      \right) 
 + A \left
     (\begin{array}{c} 
               \nabla g \\ 
               \nabla\cdot w
     \end{array}
      \right)
+ Q \left
     (\begin{array}{c} 
               w \\ 
               g
     \end{array}
      \right) = 0,     
\end{equation} 
where 
   $ A = \left
     (\begin{array}{cc} 
               0 & 0 \\ 
               0 & \frac{\lambda_0 + \mu_0}{\lambda_0 + 2\mu_0} \mu_{0}^{\frac{1}{2}}
     \end{array}
      \right) 
\text{and}\ \
 Q = \kappa^2\left
     (\begin{array}{cc} 
               \frac{1}{\mu_0} & 0 \\ 
               0 & \frac{1}{\lambda_0 + 2\mu_0} 
     \end{array}
      \right). $  
  Then 
$ 
u := \mu_{0}^{-\frac{1}{2}}w + {\mu_0}^{-1}\nabla g
$
 satisfies \eqref{4.1.3.1}. Consider now the matrix operator $P_{\tau^{-1}} = -\tau^{-2}P.$ Then the operator $P$ in \eqref{4.1.3.2} turns out to be the following operator 
\[
 P_{\tau^{-1}} = (\tau^{-1}D)^2 + i\tau^{-1}A_1\left
                                                   (\begin{array}{c} 
                                                          \tau^{-1}D \\ 
                                                          \tau^{-1}D\cdot
                                                    \end{array}
                                                    \right) 
    + \tau^{-2} A_0,\]
where $D := -i\nabla, A_1 := -A $ and $ A_0 := -Q.$ Later on we shall also denote the matrix operator
\[
         i A_1\left
                    (\begin{array}{c} 
                     \tau^{-1}D \\ 
                     \tau^{-1}D\cdot
                     \end{array}
                      \right) = A_1(\tau^{-1}D).
\]
 
            Using semi classical Weyl calculus, the derivation of the Carleman estimate with semiclassical $H^{-2}$ norm for $P_{\tau^{-1}}$ can be found in \cite{U-W}. We state it in the following proposition.
\begin{proposition}\label{4.1.3p1}
       Let $\varphi(x)$ be a linear or logarithmic phase. If $\tau$ is large enough, then for any $F\in L^2(\Omega)$, there exists $V\in H_{scl}^{2}(\Omega)$ such that
\[
 e^{\tau\varphi} P_{\tau^{-1}}(e^{-\tau\varphi}V) = F, \ \ \Vert V\Vert_{H_{scl}^{2}(\Omega)} \leq C \tau \Vert F\Vert_{L^2(\Omega)}
\]
with some constant $C > 0.$ 
\end{proposition}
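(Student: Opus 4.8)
We sketch the argument, which is that of \cite{U-W} for the stationary case $\kappa=0$; the only new feature is the zeroth order term $\kappa^2$, which is contained in $Q$ (hence in $A_0=-Q$) and enters the conjugated operator as a bounded perturbation carrying an extra factor $\tau^{-2}$, so it does not affect the estimates. The plan is, first, to reduce the solvability statement to an a priori Carleman estimate for the (formal transpose of the) conjugated operator by a standard duality argument, and, second, to prove that estimate via the semiclassical calculus. For the first step, Hahn--Banach together with the Riesz representation theorem, in the semiclassical duality $\big(H^{-2}_{scl}(\Omega)\big)'=H^{2}_{scl}(\Omega)$, shows that the assertion is equivalent to the a priori bound
\[
\|\psi\|_{L^2(\Omega)}\ \le\ C\,\tau\,\big\|\,e^{-\tau\varphi}\,P_{\tau^{-1}}^{t}\!\big(e^{\tau\varphi}\psi\big)\big\|_{H^{-2}_{scl}(\Omega)},\qquad \psi\in C_c^\infty(\Omega),
\]
for $\tau$ large, where $P_{\tau^{-1}}^{t}$ is the formal transpose (which has the same structure as $P_{\tau^{-1}}$). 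Indeed, once this holds, given $F\in L^2(\Omega)$ one defines on the subspace $\{e^{-\tau\varphi}P_{\tau^{-1}}^{t}(e^{\tau\varphi}\psi):\psi\in C_c^\infty(\Omega)\}$ of $H^{-2}_{scl}(\Omega)$ the linear functional sending $e^{-\tau\varphi}P_{\tau^{-1}}^{t}(e^{\tau\varphi}\psi)$ to $\int_\Omega F\cdot\psi$; the displayed estimate makes it well defined with norm $\le C\tau\|F\|_{L^2}$, hence it extends and is represented by some $V\in H^{2}_{scl}(\Omega)$ with $\|V\|_{H^{2}_{scl}}\le C\tau\|F\|_{L^2}$, and unwinding the pairing gives $e^{\tau\varphi}P_{\tau^{-1}}(e^{-\tau\varphi}V)=F$. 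Since $-\varphi$ is again a linear or logarithmic phase with the same structural properties, it suffices to prove the displayed estimate.

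For the Carleman estimate, recall $P_{\tau^{-1}}=(\tau^{-1}D)^2+\tau^{-1}A_1(\tau^{-1}D)+\tau^{-2}A_0$, whose principal part is the diagonal semiclassical Laplacian $(\tau^{-1}D)^2$ acting on each scalar component. Both the linear phase $x\cdot\rho$ and the logarithmic phase $t-\log|x-x_0|$ are limiting Carleman weights for the Euclidean Laplacian, and on $\overline\Omega$ their gradients are smooth and bounded (for the logarithmic phase this uses $x_0\notin\overline{ch(\Omega)}$ and the choice of $\omega_0$). After convexifying the weight in the spirit of \cite{D-K-S-U}, the semiclassical Weyl calculus together with a sharp G\aa rding inequality yields, componentwise, the estimate
\[
\|u\|_{L^2(\Omega)}+\|(\tau^{-1}D)u\|_{L^2(\Omega)}\ \le\ C\,\tau\,\big\|\,e^{-\tau\varphi}(\tau^{-1}D)^2\big(e^{\tau\varphi}u\big)\big\|_{H^{-2}_{scl}(\Omega)},\qquad u\in C_c^\infty(\Omega),
\]
for $\tau$ large. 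The remaining terms $\tau^{-1}A_1(\tau^{-1}D)$ and $\tau^{-2}A_0$ are, relative to the principal part, lower order and carry a positive power of $\tau^{-1}$ uniformly, using the explicit block structure of $A$ and of $Q$; conjugation by $e^{\pm\tau\varphi}$ only replaces $\tau^{-1}D$ by $\tau^{-1}D\pm i\nabla\varphi$ with $\nabla\varphi$ bounded, so this feature is preserved. Hence these two terms are absorbed for $\tau$ large, giving the estimate of the first step for a vector-valued $\psi$, and therefore the proposition.

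The heart of the matter is the Carleman estimate. Because the linear and logarithmic phases are only \emph{limiting} Carleman weights, the H\"ormander--Sj\"ostrand subellipticity condition is borderline, so one must convexify the weight to recover a usable positivity; combining this with the genuine ellipticity (for each fixed $\tau$) of the conjugated operator away from its codimension-two characteristic set is what produces the gain of two derivatives, and hence the membership $V\in H^{2}_{scl}(\Omega)$ with the correct $\tau$-power rather than merely $V\in L^2$. I also expect the bookkeeping of the $\tau$-weights in the absorption of $\tau^{-1}A_1(\tau^{-1}D)$ to require some care, which is why one keeps the gradient term on the left-hand side above. All of this is carried out in detail in \cite{U-W}; see also \cite{S-Y} for the analogous scalar second order estimate underlying \eqref{es_r}.
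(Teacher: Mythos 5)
Your argument is essentially the paper's: the paper gives no proof of Proposition \ref{4.1.3p1} beyond citing \cite{U-W} for the semiclassical $H^{-2}$ Carleman estimate, and your sketch is exactly that route (Carleman estimate for the conjugated operator, then the Hahn--Banach/Riesz duality step in the pairing of $H^{-2}_{scl}(\Omega)$ with $H^{2}_{scl}(\Omega)$), together with the correct observation that the frequency term only enters through $\tau^{-2}A_0$ and is harmless. The one imprecision is the absorption of $\tau^{-1}A_1(\tau^{-1}D)$: after multiplying by the factor $\tau$, its conjugated version contributes a term of size $O(1)\Vert\psi\Vert_{L^2}$ rather than a positive power of $\tau^{-1}$, so it is absorbed by the smallness of the convexification parameter in the weight (equivalently, by treating the full operator $P_{\tau^{-1}}$ directly in the Weyl calculus as in \cite{U-W}), not merely by taking $\tau$ large or by keeping the gradient term on the left as your sketch suggests.
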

\begin{remark}
 In \cite{U-W}, the functions $w$ and $g$ are represented in the form 
\[
 w = e^{-\tau(\varphi+i\psi)}(l+r) \ \text{and} \ \  g = e^{-\tau(\varphi+i\psi)}(d+s),
\]
where  $l,r \in {\mathbb{C}}^3$, $l, d$ are smooth and $(r, ~s)^\top$
satisfy the estimate
\[
 \Vert \partial^{\alpha}R\Vert_{L^2(\Omega)} \leq C \tau^{\vert\alpha\vert-1} \ \text{for} \ \vert\alpha\vert\leq 2.
\]
However, this estimate is not enough for our analysis of $I_{sp}$ and $I_{ps}$ cases. Indeed, we need the boundedness of the remainder term in the norm of the Sobolev space $H^3(\Omega)$. In the following proposition the representation of the CGO solutions with stronger estimate is given.
\end{remark}
\begin{proposition}\label{CGO_lin}
\begin{enumerate}
 \item (Linear phase).
     Choose $\rho,\rho^{\bot} \in {\mathbb{S}}^2 :=\{x\in {\mathbb{R}}^3 ; \vert x\vert = 1\}$ with $\rho\cdot\rho^{\bot} = 0.$ Then there exists $w$ and $g$ of the forms
\begin{equation}\label{monro}
 \begin{split}
& w = e^{\tau((x\cdot\rho - t)+ix\cdot\rho^{\bot})}(a_0 + \tau^{-1}a_1 + \tau^{-2}a_2 + r),\\
& g = e^{\tau((x\cdot\rho - t)+ix\cdot\rho^{\bot})}(b_0 + \tau^{-1}b_1 + \tau^{-2}b_2 + s),
\end{split}
\end{equation}
respectively, satisfying \eqref{4.1.3.2}, where $\tau(> 0)$ and $t\in \mathbb{R}$ are parameters, $a_0(x)$ is a smooth non-vanishing $3\times 1$ vector valued complex function on $\overline{\Omega}$ and $ a_1(x), a_2(x), b_0(x), b_1(x), b_2(x)$ are all
smooth functions in $\overline{\Omega}$ with $a_1, a_2 \in {\mathbb{C}}^3$ and $b_0, b_1, b_2 \in \mathbb{C}.$ The coefficients $a_j$ and $b_j, j=0,1,2,$ are all computable. 
\item
(Logarithmic phase.)
Choose $x_0 \in\mathbb{R}^3\setminus\overline{ch(\Omega)}$ and let $\omega_0\in \mathbb{S}^2$ be a vector such that
\begin{equation}\label{mcon}
 \{x\in\mathbb{R}^3 ; x-x_0 = \lambda\omega_0, \lambda\in \mathbb{R}\}\cap \partial\Omega = \emptyset.
\end{equation}
Then there exists $w$ and $g$ of the form \footnote{The parameter $a_j$ and $b_j, j=0,1,2,$ are not necessary the same as in the case of CGOs with linear phases. However, we
keep the same symbols to avoid heavy notations in the sections 6 and 7, in particular. }
\begin{equation}\label{meri}
\begin{split}
& w = e^{\tau(t-\log\vert x-x_0\vert)-i\tau\psi(x)}(a_0 + \tau^{-1}a_1 + \tau^{-2}a_2 + r),\\
& g = e^{\tau(t-\log\vert x-x_0\vert)-i\tau\psi(x)}(b_0 + \tau^{-1}b_1 + \tau^{-2}b_2 + s),
\end{split}
\end{equation}
satisfying \eqref{4.1.3.2} where $\tau(> 0), t\in \mathbb{R}$ are parameters, $a_0(x)$ is a smooth non-vanishing $3\times 1$ vector valued complex function on $\overline{\Omega}$ and $ a_1(x), a_2(x), b_0(x), b_1(x), b_2(x)$ are all
smooth functions on $\overline{\Omega}$ with $a_1, a_2 \in {\mathbb{C}}^3$ and $b_0, b_1, b_2 \in \mathbb{C}.$ 
$\psi(x)$ is defined by
\begin{equation*}
 \psi(x) := d_{\mathbb{S}^2}\left(\frac{x-x_0}{\vert x-x_0\vert}, \omega_0\right)
\end{equation*}
with the distance function $d_{\mathbb{S}^2}(\cdot,\cdot)$ on $\mathbb{S}^2$. The coefficients $a_j, b_j, j=0,1,2,$ are all computable. 
\end{enumerate}
In addition, for both linear and logarithmic phases, the remainder term $R :=(r, ~s)^\top$
 enjoys the estimates: 
\begin{equation}\label{try}
 \Vert \partial^{\alpha}R\Vert_{L^2(\Omega)} \leq C \tau^{\vert\alpha\vert-3} \ \text{for} \ \vert\alpha\vert\leq 2
\end{equation}
and
\begin{equation}\label{korc}
 \Vert \nabla R\Vert_{H^2(\Omega_0)} \leq C \ \text{for any $\Omega_0\subset\subset \Omega$} \ \text{as} \ \tau\rightarrow\infty.
\end{equation}
Finally, for both linear and logarithmic phases
$
u := {\mu_0}^{-\frac{1}{2}}w + {\mu_0}^{-1}\nabla g
$
is a complex geometrical optics solution for \eqref{4.1.3.1}. The p-part and s-part of these CGO solutions, denoted by $u_p$ and $u_s$ respectively, are 
represented by
\begin{eqnarray}\label{lp}
 u_p &=& -\kappa_{p}^{-2}[\mu_{0}^{-\frac{1}{2}}\nabla(\nabla\cdot w) + \mu_{0}^{-1}\nabla(\Delta g)],
\\ \label{ls}
 \text{and}\hspace{2cm} u_s &=& -\kappa_{s}^{-2}\mu_{0}^{-\frac{1}{2}}[\nabla(\nabla\cdot w) - \Delta w].\hspace{2cm}
\end{eqnarray}
\end{proposition}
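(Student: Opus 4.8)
The plan is to run a three-term WKB (geometric optics) construction for $P_{\tau^{-1}}$ conjugated by the complex phase, and then to absorb the residual with the Carleman estimate of Proposition \ref{4.1.3p1}; the stronger bound \eqref{korc} will follow from an interior elliptic bootstrap. Write $h=\tau^{-1}$ and let $\Phi$ denote the complex phase of \eqref{monro}, \eqref{meri}, so that the CGO is sought in the form $W=e^{\tau\Phi}(V_0+hV_1+h^2V_2+R)$ with $V_j=(a_j,b_j)^{\top}$. First I would conjugate: since $e^{-\tau\Phi}(hD)e^{\tau\Phi}=hD-i\nabla\Phi$, one computes
\[
e^{-\tau\Phi}P_{\tau^{-1}}e^{\tau\Phi}=-(\nabla\Phi\cdot\nabla\Phi)\,I_4+h\,\mathcal{L}_1+h^2\mathcal{L}_2,
\]
where $\mathcal{L}_1$ is a first order operator whose principal part is a multiple of $\nabla\Phi\cdot\nabla$, $\mathcal{L}_2$ is a second order operator with principal part $-\Delta$, and the coefficients of $\mathcal{L}_1,\mathcal{L}_2$ are smooth and $\tau$-independent. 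The crucial point is that $\nabla\Phi\cdot\nabla\Phi\equiv0$ for both phases: for the linear phase $\nabla\Phi$ is proportional to $\rho+i\rho^{\perp}$, which is isotropic because $\rho\cdot\rho^{\perp}=0$ and $|\rho|=|\rho^{\perp}|=1$; for the logarithmic phase, $(\log|x-x_0|,\,d_{\mathbb{S}^2}(\widehat{x-x_0},\omega_0))$ is a Euclidean limiting Carleman weight pair, so its two members have equal gradient norm $|x-x_0|^{-1}$ and orthogonal gradients, whence again $\nabla\Phi\cdot\nabla\Phi=0$. Thus $e^{-\tau\Phi}P_{\tau^{-1}}e^{\tau\Phi}=h(\mathcal{L}_1+h\mathcal{L}_2)$.

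Next I would solve the transport hierarchy obtained by inserting the ansatz into $P_{\tau^{-1}}W=0$ and collecting powers of $h$:
\[
\mathcal{L}_1V_0=0,\qquad \mathcal{L}_1V_1=-\mathcal{L}_2V_0,\qquad \mathcal{L}_1V_2=-\mathcal{L}_2V_1,
\]
together with the remainder equation $(\mathcal{L}_1+h\mathcal{L}_2)R=-h^3\mathcal{L}_2V_2$. Because the matrix $A$ in \eqref{4.1.3.2} has only its $(g,g)$-entry non-zero, the $w$-block of $\mathcal{L}_1$ is a decoupled transport operator; hence $a_0$ solves a homogeneous transport equation and can be taken smooth and non-vanishing, $b_0$ then solves an inhomogeneous transport equation with source built from $a_0$, and $a_1,b_1,a_2,b_2$ are obtained successively, all smooth, $\tau$-independent and computable. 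After the natural change of variables (affine in the linear case; logarithmic about $x_0$ and conformal in the logarithmic case), the principal part of $\mathcal{L}_1$ becomes a $\bar{\partial}$-operator in two of the variables, so each equation of the hierarchy is solved by a Cauchy transform exactly as in \cite{U-W} and \cite{K-S-U}; the hypotheses $x_0\notin\overline{ch(\Omega)}$ and \eqref{mcon} guarantee that the two real phases are smooth in a neighborhood of $\overline{\Omega}$.

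It then remains to construct and estimate $R$. By the choice of $V_0,V_1,V_2$ one has $(\mathcal{L}_1+h\mathcal{L}_2)(V_0+hV_1+h^2V_2)=h^3\mathcal{L}_2V_2$, hence $e^{-\tau\Phi}P_{\tau^{-1}}e^{\tau\Phi}R=-h^4\mathcal{L}_2V_2$. Factoring out the oscillatory part $e^{\pm i\tau\,\mathrm{Im}\,\Phi}$ of the exponential (a smooth unimodular function), this is equivalent to an equation of the type treated in Proposition \ref{4.1.3p1} with right-hand side of $L^2(\Omega)$-norm $O(\tau^{-4})$; that proposition therefore yields $R'\in H_{scl}^{2}(\Omega)$ with $\|R'\|_{H_{scl}^{2}(\Omega)}\le C\tau\cdot O(\tau^{-4})=O(\tau^{-3})$, and multiplying back by the unimodular factor gives $\|R\|_{H_{scl}^{2}(\Omega)}=O(\tau^{-3})$, which unwinds to \eqref{try}. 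It is essential here to carry three amplitude terms: with $V_0$ alone the residual is only $O(h^2)$ in $L^2$ and the Carleman estimate returns merely $\|\partial^\alpha R\|_{L^2}=O(\tau^{|\alpha|-1})$, which (as noted in the Remark above) is insufficient for the $I_{sp},I_{ps}$ analysis.

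Finally, for \eqref{korc} I would bootstrap by interior elliptic regularity. Writing $\mathcal{L}_2=-\Delta+\mathcal{L}_2'$ with $\mathcal{L}_2'$ of order at most one, the remainder equation gives $\Delta R=h^{-1}\mathcal{L}_1R+\mathcal{L}_2'R+h^2\mathcal{L}_2V_2$; by \eqref{try}, $\|R\|_{H^2(\Omega)}=O(\tau^{-1})$, so the right-hand side is bounded in $H^1(\Omega)$ uniformly in $\tau$ (the term $h^{-1}\mathcal{L}_1R$ being $O(1)$ and the other two $O(\tau^{-1})$ and $O(\tau^{-2})$), whence interior $H^3$-estimates for the componentwise Laplacian give $\|R\|_{H^3(\Omega_0)}\le C(\|\Delta R\|_{H^1(\Omega_1)}+\|R\|_{L^2(\Omega_1)})=O(1)$ for $\Omega_0\subset\subset\Omega_1\subset\subset\Omega$, which is \eqref{korc}. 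That $u:=\mu_0^{-1/2}w+\mu_0^{-1}\nabla g$ solves \eqref{4.1.3.1} was already recorded after \eqref{4.1.3.2}, and the representations \eqref{lp}--\eqref{ls} of $u_p,u_s$ follow by a direct computation using $\nabla\cdot\nabla g=\Delta g$ and $\curl\nabla g=0$. The step I expect to be the main obstacle is the logarithmic case: checking that $(\log|x-x_0|,\,d_{\mathbb{S}^2}(\widehat{x-x_0},\omega_0))$ is a limiting Carleman weight pair and then solving the resulting variable-coefficient first order transport system while keeping all amplitudes smooth and $a_0$ non-vanishing. The linear case is elementary by comparison, and the rest is careful bookkeeping of powers of $\tau$.
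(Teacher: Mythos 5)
Your proposal is correct and follows essentially the same route as the paper's proof: a three-term WKB ansatz with the eikonal equation for the complex phase, the transport hierarchy (the paper's \eqref{csw2}--\eqref{csw4}, resp. \eqref{C_R1}--\eqref{C_R3}) reduced to Cauchy--Riemann equations after the same changes of variables, the remainder absorbed via Proposition \ref{4.1.3p1} with the $O(\tau^{-4})$ residual to get \eqref{try}, and \eqref{korc} obtained by interior elliptic regularity applied to the remainder equation. No gaps to report.
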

\begin{proof}
 First, remark that we can get rid of the constant terms $e^{-\tau t}$ and $e^{\tau t}$ in \eqref{monro} and \eqref{meri}. We construct the solution of \eqref{4.1.3.2} of the form
\[
 U = e^{-\tau(\varphi+i\psi)}(L_0 + \tau^{-1} L_1 + \tau^{-2} L_2 + R),
\]
where $(\varphi + i\psi)$ is a phase function, $L_0, L_1, L_2$ are smooth functions in $\overline{\Omega}$ and $R\in H^2(\Omega)$ is the remainder. 
Applying the WKB method for the conjugate operator $e^{\tau\varphi}\tau^{-2}Pe^{-\tau\psi}$, we obtain
\begin{equation}\label{conju_op}
\begin{split}
& e^{\tau\varphi}\tau^{-2}Pe^{-\tau(\varphi+i\psi)}(L_0 + \tau^{-1} L_1 + \tau^{-2} L_2) \\
& = e^{-i\tau\psi}\left[(-(\nabla(\varphi+i\psi))^2 + \tau^{-1}{\tilde Q} + \tau^{-2}P)(L_0 + \tau^{-1} L_1 + \tau^{-2} L_2) \right] \\
& = e^{-i\tau\psi}
[-(\nabla(\varphi+i\psi))^2L_0 
  + \tau^{-1}\{-(\nabla(\varphi+i\psi))^2 L_1 + {\tilde Q}L_0\}          \\   
& + \tau^{-2}\{-(\nabla(\varphi+i\psi))^2 L_2 + {\tilde Q}L_1 + PL_0\} 
 + \tau^{-3}\{{\tilde Q}L_2 + PL_1 \} 
 + \tau^{-4}\{PL_2\}], 
\end{split}
\end{equation}
where $\tilde Q := -\nabla\psi\cdot D - D\cdot\nabla\psi + i\nabla\varphi\cdot D + iD\cdot\nabla\varphi + A_1(i\nabla\varphi-\nabla\psi).$
We choose $\varphi, \psi, L_0, L_1$ and $L_2$ such that
\begin{eqnarray}\label{csw1}
 (\nabla(\varphi+i\psi))^2 = 0,
\\ \label{csw2}
 -(\nabla(\varphi+i\psi))^2 L_1 + {\tilde Q}L_0 = 0,
\\ \label{csw3}
 -(\nabla(\varphi+i\psi))^2 L_2 + {\tilde Q}L_1 + PL_0 = 0,
\\ \label{csw4}
 {\tilde Q}L_2 + PL_1 = 0.
\end{eqnarray}
Therefore, \eqref{csw1} is the well known eikonal equation for $\varphi$ and $\psi$ can be written as
\begin{equation}\label{eikonal_csw}
 (\nabla \psi)^2 = (\nabla \varphi)^2, \\ \nabla\varphi\cdot\nabla\psi = 0. 
\end{equation}
\begin{Case}{Logarithmic Phase.}
\end{Case}
For the case of logarithmic phase, we have $\varphi(x) = \log\vert x-x_0\vert$. So we can find a solution of \eqref{eikonal_csw} of the form
\[
 d_{\mathbb{S}^2}\left( \frac{x-x_0}{\vert x-x_0\vert}, \omega_0\right) = \frac{\pi}{2} - \tan^{-1}\frac{\omega_0\cdot(x-x_0)}{\sqrt{(x-x_0)^2-(\omega_0\cdot(x-x_0))^2}},
\]
see \cite{S-Y} for instance.
If we choose $\psi$ as above, \eqref{csw2}, \eqref{csw3} and \eqref{csw4} are the transport equations for $L_0, L_1$ and $L_2.$ Now by the change of coordinates so that $x_0 = 0, \overline{\Omega} \subset\{x_3 > 0\},$ and $w_0 = e_1,$
we obtain $\varphi + i\psi = \log z,$ where $z = x_1 + i\vert x'\vert$ is a complex variable with $x' := (x_2,x_3).$ Therefore, if we write $L_0 :=(a_0, ~b_0)^\top$
then \eqref{csw2} gives
\begin{eqnarray}\label{trans_1}
\left[(\nabla\log z)\cdot\nabla + \nabla\cdot\nabla\log z \right]a_0 = 0
\\ \label{trans_2}
 \text{and}\hspace{2cm}\left[(\nabla\log z)\cdot\nabla + \nabla\cdot\nabla\log z \right]b_0 + \left(\frac{\lambda_0 + \mu_0}{\lambda_0 + 2\mu_0} \right){\mu_0}^{\frac{1}{2}}(\nabla\log z)\cdot a_0 = 0.&\hspace{2cm}
\end{eqnarray}
The equation \eqref{trans_1} reduces to the following Cauchy-Riemann equation in the $z$ variable
\[
 \left(\partial_{\bar z} - \frac{1}{2(z-\bar z)} \right)a_0(z,\theta) = 0,
\]
where $\theta = \frac{x'}{\vert x'\vert}.$ This last equation has the following solution
\begin{equation}\label{solCR}
 a_0(z,\theta) = (z-\bar z)^{-\frac{1}{2}} (\beta_1, ~\beta_2, ~\beta_3)^\top
\end{equation}
where $(\beta_1, \beta_2, \beta_3) \in\mathbb{S}^2$ are arbitrary.\footnote{In section 6.1, see the proof of Lemma \ref{ne1}, we choose $(\beta_1, \beta_2, \beta_3) = \omega_0$, where $\omega_0$ is given in \eqref{mcon}.}
Replacing $a_0$ in \eqref{trans_2}, we obtain smooth solution $b_0$ in $\Omega.$ Similarly, we get smooth solutions $L_1$ and $L_2$ of the equations
\eqref{csw3} and \eqref{csw4} on $\overline{\Omega}$, see \cite{A-F} for instance.
Finally, to derive the estimate of the remainder term $R$, from Proposition \ref{4.1.3p1}, we have
\begin{eqnarray}\label{csw_es_remainder0}
 e^{\tau\varphi}\tau^{-2} Pe^{-\tau(\varphi + i\psi)}R = -e^{-i\tau\psi}\tau^{-4}(PL_2), \\ \nonumber
 \Vert R\Vert_{H_{scl}^{2}(\Omega)}\leq C \tau\Vert e^{-i\tau\psi}\tau^{-4}(PL_2)\Vert_{L^2(\Omega)} = O(\tau^{-3}).
\end{eqnarray}

Now, to estimate the gradient of $R$ in $H^2(\Omega_0)$ for sub-domain $\Omega_0\subset\subset\Omega,$ we use interior estimate.
The equation \eqref{csw_es_remainder0} can be rewritten in terms of $r, s$ as 
\begin{equation*}\label{csw_es_remainder1}
\begin{split}
-\Delta r + 2\tau\nabla(\varphi+i\psi)\cdot\nabla r + \tau(\Delta(\varphi + i\psi)) r - \frac{\kappa^2}{\mu_0} r =\tau^{-2}P a_2, \ \text{and} \\
-\Delta s + 2\tau\nabla(\varphi+i\psi)\cdot\nabla s + \tau(\Delta(\varphi + i\psi)) s  
+ \tau\frac{\lambda_0+\mu_0}{\lambda_0+2\mu_0}{\mu_0}^{\frac{1}{2}}\left[\nabla(\varphi+i\psi)\cdot r
- \nabla\cdot r\right]- \frac{\kappa^2}{\lambda_0+2\mu_0} s= \tau^{-2}P b_2.
\end{split}
\end{equation*}
Taking the $\nabla$ on the both sides of the above equations and applying the interior estimate on
\[
\begin{split}
-\Delta (\nabla r) = &- 2\tau\nabla(\nabla(\varphi+i\psi)\cdot\nabla r) - \tau\nabla((\Delta(\varphi + i\psi)) r) + \frac{\kappa^2}{\mu_0}\nabla r + \tau^{-2}\nabla(P a_2),  \\
\text{and}\hspace{1cm} -\Delta (\nabla s) 
 =& - 2\tau\nabla(\nabla(\varphi+i\psi)\cdot\nabla s) - \tau\nabla((\Delta(\varphi + i\psi)) s) \\
& - \tau\frac{\lambda_0+\mu_0}{\lambda_0+2\mu_0}{\mu_0}^{\frac{1}{2}}\left[\nabla(\nabla(\varphi+i\psi)\cdot r)
- \nabla(\nabla\cdot r)\right] + \frac{\kappa^2}{\lambda_0+2\mu_0}\nabla s + \tau^{-2}\nabla(P b_2),
\end{split}
\]
we obtain the estimate for the remainder term
$
 \Vert\nabla R\Vert_{H^2(\Omega_0)} \leq C,
$
where $C>0$ is a constant.
\begin{Case}{Linear Case.}\end{Case}
In the linear case, we choose $\varphi := -x\cdot\rho$ and $\psi := -x\cdot\rho^{\perp},$ where 
$\rho, \rho^{\perp}\in\mathbb{S}^2$ with $\rho\cdot\rho^{\perp} = 0.$
Then \eqref{conju_op} reduces to
\[
\begin{split}
 e^{-\tau(x\cdot\rho)}\tau^{-2}Pe^{\tau x\cdot(\rho+i\rho^{\bot})}(L_0 + \tau^{-1}L_1 +\tau^{-2}L_2) 
 = e^{i\tau(x\cdot\rho^{\bot})}
& [ \tau^{-1} T_{\rho}L_0 
 + \tau^{-2}\{T_{\rho}L_1 + PL_0\}   \\
& + \tau^{-3} \{T_{\rho}L_2 + PL_1\}  
 + \tau^{-4}(PL_2)] 
\end{split}
\]
where $T_{\rho} := -i(\rho + i\rho^{\bot})\cdot D + A_1(-i\rho+\rho^{\bot}).$ 
We choose $L_0, L_1$ and $L_2$ such that 
\begin{eqnarray}\label{C_R1}
  T_{\rho} L_0 = 0, 
\\ \label{C_R2}
 T_{\rho} L_1 + PL_0 = 0, 
\\ \label{C_R3}
   T_{\rho} L_2 + PL_1 = 0.
\end{eqnarray}
Here the equations \eqref{C_R1}, \eqref{C_R2}, \eqref{C_R3} are the system of Cauchy-Riemann type. Introducing new variable $z = (z_1, z_2) = (\rho + i\rho^{\bot})\cdot x$, the equation \eqref{C_R1}
becomes
\begin{equation}\label{C_R11}
 -2\partial_{\bar{z}} L_0 + A_1(-i\rho+\rho^{\bot}) L_0 = 0.
\end{equation}
Now we denote $L_0 :=(a_0, ~b_0)^\top$,
where $a_0 := (a_0^{1}, ~a_0^{2}, ~a_0^{3})^\top$
  $\in {\mathbb{C}}^3 $ and $b_0 \in \mathbb{C}. $
Therefore the equation \eqref{C_R11} reduces to
\begin{equation}\label{C_R12}
 \begin{cases}
  \partial_{\bar{z}}a_0 = 0 \\
  -2\partial_{\bar{z}}b_0 + \frac{\lambda_0 + \mu_0}{\lambda_0 + 2\mu_0}\mu_0^{\frac{1}{2}}(\rho+i\rho^{\bot})\cdot a_0 = 0. 
 \end{cases}
\end{equation}
In particular, any analytic function satisfies $\partial_{\bar{z}}a_0 = 0$. Take any non-zero constant vector $\rho$, then $\rho$ satisfies $\partial_{\bar{z}}\rho = 0$. Again if we replace $a_0$ by $\rho$ in the second equation 
of \eqref{C_R12} then we get a smooth solution $b_0$ on $\overline{\Omega}$. \\
Next, we look for $L_1$. The transport equation \eqref{C_R2} becomes
\begin{equation}\label{C_R21}
 -2\partial_{\bar{z}} L_1 + A_1(-i\rho+\rho^{\bot}) L_1 = -PL_0,
\end{equation}
where we denote $L_1 := (a_1, ~b_1)^\top$,
$a_1 :=(a_1^{1}, ~a_1^{2}, ~a_1^{3})^\top$
$\in {\mathbb{C}}^3 $ and $b_1 \in \mathbb{C}. $
Therefore the equation \eqref{C_R21} reduces to
\begin{equation}\label{stupid}
 -2\partial_{\bar{z}}  \left
     (\begin{array}{c} 
               a_1 \\ 
               b_1
     \end{array}
      \right) 
+  \left
     (\begin{array}{c} 
               0 \\ 
              \frac{\lambda_0 + \mu_0}{\lambda_0 + 2\mu_0}\mu_0^{\frac{1}{2}}(\rho+i\rho^{\bot})\cdot a_1
     \end{array}
      \right) 
= -P \left
     (\begin{array}{c} 
               a_0 \\ 
               b_0
     \end{array}
      \right).
\end{equation}

The equation \eqref{stupid} is solvable and the solution is a smooth function on $\overline{\Omega}$ since the right hand side is smooth, see for instance \cite{A-F}. 
We look for $L_2$ in a similar way. The transport equation \eqref{C_R3} becomes
\begin{equation}\label{C_R31}
 -2\partial_{\bar{z}} L_2 + A_1(-i\rho+\rho^{\bot}) L_2 = -PL_1
\end{equation} 
and then we obtain $L_2$ as a smooth function on $\overline{\Omega}$, 
where we denote $L_2 :=(a_2, ~b_2)^\top$,
with 
$a_2 :=(a_2^{1}, ~a_2^{2}, ~a_2^{3})^\top$
$ \in {\mathbb{C}}^3 $ and $b_2 \in \mathbb{C}. $  
Finally, to get the remainder term of the complex geometric optics solution, we choose $R \in{\mathbb{C}}^4,$ by Proposition \ref{4.1.3p1},
such that 
\begin{equation}\label{es_remainder0}
 e^{-\tau(x\cdot\rho)}\tau^{-2}Pe^{\tau x\cdot(\rho+i\rho^{\bot})}R =-e^{i\tau(x\cdot\rho^{\bot})}\tau^{-4}(PL_2), 
\end{equation}
hence
$
 \Vert R\Vert_{H_{scl}^{2}(\Omega)}\leq C \tau\Vert e^{i\tau(x\cdot\rho^{\bot})}\tau^{-4}(PL_2)\Vert_{L^2(\Omega)} = O(\tau^{-3}).
$
We can write the estimates for the remainder term as
\begin{equation}\label{es_rem1}
 \begin{cases}
  \Vert R\Vert_{L^2(\Omega)} \leq C\tau^{-3} \\
  \Vert \nabla R\Vert_{L^2(\Omega)} \leq C\tau^{-2} \\
  \Vert \nabla\nabla R\Vert_{L^2(\Omega)} \leq C\tau^{-1}. 
 \end{cases}
\end{equation}
To derive the estimate in $H^3(\Omega_0),$ $\Omega_0\subset\subset\Omega$ we proceed as in Case 1 using interior regularity estimates.
\end{proof}
\end{subsection}
\end{section}
\begin{section}{\textbf{Reconstruction Scheme}}
 In this section, we show how one can reconstruct some features of the obstacle using only one part of the farfield pattern. 
These features are extracted from the behavior of the indicator functions defined in section \ref{indi_sub} for large $\tau.$
Precisely, we can reconstruct the convex hull of the obstacle if we use CGOs with linear phases and some parts of its non-convex part if we use CGOs with logarithmic phase. 
Let us introduce the following two functions:
\begin{eqnarray}\label{linz}
h_D(\rho) &:=& \sup_{x\in D} x\cdot\rho, \ (\rho\in \mathbb{S}^2), \ \text{and}\
 \\ \label{vienna}
 d_D(x_0) &:=& \inf_{x\in D}\log|x-x_0|,\  (x_0 \in \mathbb{R}^3\setminus\overline{ch(\Omega)}).
\end{eqnarray}
\textbf{I. Use of purely $p$ or $s$ type CGOs} \\
 Recall that the indicator function $I_{ss}$ represents the energy 
when we use $s$ incident field and the $s$-part of the farfield data. Similarly, the indicator function $I_{pp}$ represents the energy when we use the $s$ incident field and the $p$-part of the farfield data.
So for this case we choose the $p$-part and $s$-part of the CGO solution discussed in subsection \ref{cgo_PP}. We
have the following theorem.
\begin{theorem}\label{theorem1}
\textbf{(CGOs with linear phase.)}
Let $\rho \in \mathbb{S}^{2}$ and take $v$ to be the $p$-type CGO of linear phase introduced in section \ref{cgo_PP}. Let $I_{pp}(\tau,t)$ be the corresponding indicator functions defined in \eqref{PP}. 
 For both the penetrable and the impenetrable cases \footnote{For the penetrable case, we further assume that $k^2$ is not the Dirichlet eigenvalue for the Lam\'e operator in $D$.
This is needed in section 5.2 where we use the single layer potential to represent the solution of the scattered field. This condition can be avoided by using combined single and double
layer potentials, as it is done for the acoustic case \cite{K-S-1}.}, we have the following characterizations of $h_{D}(\rho)$.
\begin{eqnarray}\label{main_behavior_1_pp}
 \vert \tau^{-1}I_{pp}(\tau,t)\vert \leq Ce^{-c\tau},\; \tau >>1, \; c, C>0, \mbox{and in particular,}  \lim_{\tau \to \infty}\vert I_{pp}(\tau,t)\vert = 0 \quad (t>h_D(\rho)),
\\ \label{main_behavior_2_pp}
 \liminf_{\tau \to \infty}|\tau^{-1}I_{pp}(\tau, h_{D}(\rho))| >0 \;, \mbox{ and precisely, }
 c \leq \tau^{-1}\vert I_{pp}(\tau,h_D(\rho))\vert \leq C \tau^2, 
\; \tau >>1,\; c, C>0,
\\ \label{main_behavior_3_pp}
 \vert \tau^{-1}I_{pp}(\tau,t) \vert \geq Ce^{c\tau}, \; \tau >>1,\; c, C>0
 , \mbox{ and in particular, } \lim_{\tau \to \infty}|I_{pp}(\tau,t)| = \infty \quad
 (t<h_D(\rho)).
\end{eqnarray}
\textbf{(CGOs with logarithmic phase.)}
Let $x_0\in \mathbb{R}^3\setminus\overline{ch(\Omega)}$ and set $v$ to be the $p$-type CGO with logarithmic phase introduced in section \ref{cgo_SS}. Let $I_{pp}(\tau,t)$ the
corresponding indicator functions defined in \eqref{PP}.
 For both the penetrable and the impenetrable cases, we have the following characterizations of $d_D(x_0)$.
\begin{eqnarray}\label{main_behavior_1_ppi}
 \vert \tau^{-1}I_{pp}(\tau,t)\vert \leq Ce^{-c\tau},\; \tau >>1, \; c, C>0, \mbox{and in particular,}  \lim_{\tau \to \infty}\vert I_{pp}(\tau,t)\vert = 0 \quad (t<d_D(x_0)),
\\ \label{main_behavior_2_ppi}
 \liminf_{\tau \to \infty}|\tau^{-1}I_{pp}(\tau, d_D(x_0))| >0 \;,  \mbox{ and precisely, }
 c \leq \tau^{-1}\vert I_{pp}(\tau,d_D(x_0))\vert \leq C \tau^2, 
\; \tau >>1,\; c,C>0,
\\ \label{main_behavior_3_ppi}
 \vert \tau^{-1}I_{pp}(\tau,t) \vert \geq Ce^{c\tau}, \; \tau >>1,\; c, C>0
 , \mbox{ and in particular, } \lim_{\tau \to \infty}|I_{pp}(\tau,t)| = \infty \quad
 (t>d_D(x_0)).
\end{eqnarray}
The above estimates are also valid if we replace $I_{pp}$ by $I_{ss}$ and the $p$-type CGOs by the $s$-type CGOs introduced in section \ref{cgo_PP} and section \ref{cgo_SPPS}.
\end{theorem}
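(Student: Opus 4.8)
The plan is to start from the identity for the indicator function $I_{pp}$ derived in Section~2, namely that, after passing to the limit in the densities $(g_n)$, one has
\[
 I_{pp}(\tau,t)=\int_{\partial D}\bigl[u^s(v_p)\cdot(\overline{\sigma(v_p)}\cdot\nu)-\overline{v_p}\cdot(\sigma(u^s(v_p))\cdot\nu)\bigr]\,ds(x),
\]
where $v=v_p$ is the $p$-type CGO of Section~\ref{cgo_PP}, i.e. $v=\nabla U$ with $U$ the scalar Helmholtz CGO of Proposition~\ref{ll}. The first step is to rewrite this boundary integral, via Green's formula and the transmission (or traction) conditions, as a coercive quadratic form in the scattered field plus a lower-order term; this is exactly the reduction carried out in \cite{S-Y,K-S-2} in the acoustic and Maxwell cases and in \cite{K-S} for the probe method, so I would invoke those computations. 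Concretely, one shows that $\mathrm{Re}\,I_{pp}(\tau,t)$ (or a fixed component of it) is, up to controlled errors, comparable to the elastic energy $\int_D \mathbb{C}\epsilon(w):\overline{\epsilon(w)}$ of an auxiliary interior field $w$ together with a term involving $\int_D |v_p|^2$ — the jump assumptions $\mu_D>0$, $2\mu_D+3\lambda_D\ge 0$ are what make this form sign-definite near $\partial D$.

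The second step is the asymptotic analysis of the dominant quantities as $\tau\to\infty$. Here the key is the explicit structure $v_p=\nabla U$ with, in the linear-phase case, $U=e^{\tau(x\cdot\rho-t)+i\sqrt{\tau^2+\kappa_p^2}\,x\cdot\rho^\perp}$, so that $|v_p(x)|^2 \sim \tau^2 e^{2\tau(x\cdot\rho-t)}$ pointwise in $D$; integrating over $D$ and using $h_D(\rho)=\sup_{x\in D}x\cdot\rho$, a Laplace-type estimate gives $\int_D|v_p|^2 \asymp \tau^2 e^{2\tau(h_D(\rho)-t)}$ up to polynomial factors coming from the geometry of $D$ near the supporting hyperplane. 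This immediately yields: exponential decay $e^{-c\tau}$ when $t>h_D(\rho)$, the polynomial-in-$\tau$ two-sided bound $c\le \tau^{-1}|I_{pp}(\tau,h_D(\rho))|\le C\tau^2$ at the critical value, and exponential blow-up $e^{c\tau}$ when $t<h_D(\rho)$. For the logarithmic phase one repeats the argument with the weight $e^{2\tau(t-\log|x-x_0|)}$, replacing $h_D(\rho)$ by $d_D(x_0)=\inf_{x\in D}\log|x-x_0|$ and reversing the roles of $t$ large/small; the needed remainder estimate $\|r\|_{H^2_{scl}}\le C\tau^{-2}$ from Proposition~\ref{ll} (equivalently \eqref{winkl}) guarantees that $r$ and its derivatives are swallowed by the dominant $a_0$-term.

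The third step is to control the error terms: one must show that the contribution of the scattered field $u^s(v_p)$ on $\partial D$, and of the remainder $r$ in the CGO, do not destroy the leading behavior. For the impenetrable case this is done by the integral-equation estimates on $H^s(\partial D)$: representing $u^s(v_p)$ by a single-layer potential (using the footnote hypothesis that $\kappa^2$ is not a Dirichlet eigenvalue, or combined layers otherwise), the relevant boundary operators are bounded on Sobolev scales on the Lipschitz surface, so $\|u^s(v_p)\|_{H^{s}(\partial D)}\lesssim \|\sigma(v_p)\cdot\nu\|_{H^{s-1}(\partial D)}\lesssim \tau^{1/2}\|v_p\|_{H^{\cdot}}$, and the traces of $v_p$ are estimated directly from the explicit exponential. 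For the penetrable case one uses instead the $L^p$-estimates for gradients of solutions of the Lamé system with bounded measurable coefficients (Meyers-type estimates). The upshot in both cases is that the "cross" and "scattered" contributions are of the same exponential order $e^{2\tau(h_D(\rho)-t)}$ but with a smaller power of $\tau$, hence absorbed.

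The main obstacle I expect is making the coercivity/energy-comparison step rigorous at Lipschitz regularity and with merely bounded measurable Lamé coefficients inside $D$: one cannot integrate by parts freely, and the sign-definiteness of the quadratic form has to be extracted only from the behavior near $\partial D$ (as the footnote to the statement emphasizes). Everything else — the Laplace asymptotics for $\int_D|v_p|^2$, the propagation of the $H^\alpha$, $\alpha<3$, smallness of $r$, and the bookkeeping of polynomial factors in the two-sided bound \eqref{main_behavior_2_pp} — is, given Proposition~\ref{ll} and the layer-potential/$L^p$ machinery, essentially routine and parallels \cite{S-Y,K-S-2}. The final remark, that the same estimates hold with $I_{ss}$ and the $s$-type CGOs $\mathrm{curl}\,V$, follows by the identical argument using \eqref{she1} and the representation $v_s=\mathrm{curl}\,V$ with $V$ a vector Helmholtz CGO, since $|v_s|^2$ carries the same exponential weight.
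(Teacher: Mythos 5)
Your proposal is correct and follows essentially the same route as the paper: reduction to the critical case $t=h_D(\rho)$ (resp. $t=d_D(x_0)$) via the trivial exponential rescaling, a coercive lower bound for $-I(v,v)$ in terms of $\|\nabla v\|_{L^2(D)}^2$ extracted from the jump conditions (penetrable case, Lemma \ref{ss_1}) or from ellipticity plus Korn's inequality (impenetrable case), absorption of the scattered-field contributions by layer potentials on negative-order Sobolev spaces over the Lipschitz boundary and by Meyers-type $L^q$ gradient estimates, and finally the Laplace-type asymptotics of $\int_D|\nabla U|^2$ with the remainder $r$ controlled by \eqref{winkl}. The only imprecision is in the bookkeeping of the polynomial factors (the lower bound $\int_D e^{-2\tau(h_D(\rho)-x\cdot\rho)}dx\gtrsim\tau^{-3}$ from the Lipschitz graph bound $l_j(y')\leq C|y'|$ is what yields the two-sided estimate in \eqref{main_behavior_2_pp}), which you acknowledge and which matches Lemmas \ref{ne1}--\ref{ne2}.
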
 \\
\textbf{II. Use of CGOs with both non-vanishing $p$ and $s$ parts} \\
Regarding the indicator function $I_{sp},$ which depends on $s$-incident wave and $p$-part of the farfield and similarly for $I_{ps},$ which
depends on $p$-incident wave and $s$-part of the farfield, we cannot use the expression of the CGO solution as in \eqref{pp} and \eqref{ss}, since the $s$-part of \eqref{pp} and the $p$-part of \eqref{ss} are zero.
Instead, we use the CGOs we discussed in subsection \ref{cgo_SPPS}. Using these CGOs, we have the following theorem. 
\begin{theorem}\label{theorem2}
\textbf{(CGOs with linear phases.)}
Let $\rho \in \mathbb{S}^{2}$.
 For both the penetrable and the impenetrable cases \footnote{Same comments as for Theorem \ref{theorem1}.}, we have the following characterizations of $h_{D}(\rho)$.
\begin{eqnarray}\label{main_behavior_1_sp}
 \vert \tau^{-3}I_{sp}(\tau,t)\vert \leq Ce^{-c\tau},\; \tau >>1, \; c, C>0, \mbox{ and in particular, }  \lim_{\tau \to \infty}\vert I_{sp}(\tau,t)\vert = 0 \quad (t>h_D(\rho)),
\\ \label{main_behavior_2_sp}
 \liminf_{\tau \to \infty}|\tau^{-3}I_{sp}(\tau, h_{D}(\rho))| >0 \;,  \mbox{ and precisely, }
 c \leq \tau^{-1}\vert I_{sp}(\tau,h_D(\rho))\vert \leq C \tau^4, 
\; \tau >>1,\; c, C>0,
\\ \label{main_behavior_3_sp}
 \vert \tau^{-3}I_{sp}(\tau,t) \vert \geq Ce^{c\tau}, \; \tau >>1,\; c, C>0
 , \mbox{ and in particular, } \lim_{\tau \to \infty}|I_{sp}(\tau,t)| = \infty \quad
 (t<h_D(\rho)).
\end{eqnarray}
\textbf{(CGOs with logarithmic phases.)}
Let $x_0\in \mathbb{R}^3\setminus\overline{ch(\Omega)}$.
 For both the penetrable and the impenetrable cases, we have the following characterizations of $d_D(x_0)$.
\begin{eqnarray}\label{main_behavior_1_spi}
 \vert \tau^{-3}I_{sp}(\tau,t)\vert \leq Ce^{-c\tau}, \tau >>1, c, C>0, \mbox{ and in particular, }  \lim_{\tau \to \infty}\vert I_{sp}(\tau,t)\vert = 0 \quad (t<d_D(x_0)),
\\ \label{main_behavior_2_spi}
 \liminf_{\tau \to \infty}|\tau^{-3}I_{sp}(\tau, d_D(x_0))| >0 \;,  \mbox{ and precisely, }
 c \leq \tau^{-1}\vert I_{sp}(\tau,d_D(x_0))\vert \leq C \tau^4, 
\; \tau >>1,\; c, C>0,
\\ \label{main_behavior_3_spi}
 \vert \tau^{-3}I_{sp}(\tau,t) \vert \geq Ce^{c\tau}, \; \tau >>1,\; c, C>0
 , \mbox{ and in particular, } \lim_{\tau \to \infty}|I_{sp}(\tau,t)| = \infty \quad
 (t>d_D(x_0)).
\end{eqnarray}
The above estimates are valid if we replace $I_{sp}$ by $I_{ps}$.
\end{theorem}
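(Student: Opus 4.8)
The plan is to reduce the analysis of $I_{sp}$ (respectively $I_{ps}$) to an integral over $\partial D$ of the CGO solution tested against the scattered field it generates, exactly as in the identities \eqref{log2} and \eqref{she2}, and then to extract the asymptotics of this boundary integral in $\tau$. First I would use \eqref{she2}, which says that $I_{sp}$ equals $\int_{\partial D}[u^s(v_s)\cdot(\overline{\sigma(v_p)}\cdot\nu) - \overline{v_p}\cdot(\sigma(u^s(v_s))\cdot\nu)]\,ds(x)$, where now $v = u = \mu_0^{-1/2}w + \mu_0^{-1}\nabla g$ is the full-Lam\'e CGO from Proposition \ref{CGO_lin}, $v_s$ and $v_p$ are its $s$- and $p$-parts given by \eqref{ls} and \eqref{lp}, and $u^s(v_s)$ is the scattered field with $v_s$ as incident wave. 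Using the symmetry of the elasticity bilinear form (Betti's identity) and the fact that $v=v_p+v_s$ solves the Lam\'e equation in a neighborhood $\Omega$ of $\overline D$, I would rewrite this boundary integral as a coercive volume quantity: for the impenetrable case one integrates by parts to obtain, up to lower order terms, an expression of the form $\int_{\partial D}\sigma(u^s(v_s))\cdot\nu\cdot \overline{v_p}$ which, combined with the traction boundary condition $\sigma(u^s)\cdot\nu = -\sigma(v_s)\cdot\nu$ on $\partial D$, and the well-posedness and mapping properties of the exterior traction problem on $H^s(\partial D)$, is controlled from above and below by $\int_{\partial D}$ of products of the CGO with itself; for the penetrable case one uses instead the variational formulation over $\mathbb{R}^3$ and the $L^p$ bounds on $\nabla u^s$ under the jump conditions $2\mu_D+3\lambda_D\ge 0$, $\mu_D>0$, as in \cite{K-S-2}, \cite{S-Y}.

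The heart of the argument is then the asymptotic behaviour in $\tau$ of the CGO quantities on $\partial D$. Here I would plug in the explicit expansions \eqref{monro}–\eqref{meri}: $w = e^{\tau(\varphi_\tau)}(a_0+\tau^{-1}a_1+\tau^{-2}a_2+r)$ and similarly for $g$, where $\varphi_\tau = (x\cdot\rho-t)+ix\cdot\rho^\perp$ in the linear case and $\varphi_\tau = (t-\log|x-x_0|)-i\psi(x)$ in the logarithmic case. The key new feature, compared to Theorem \ref{theorem1}, is that $v_s = -\kappa_s^{-2}\mu_0^{-1/2}[\nabla(\nabla\cdot w)-\Delta w]$ and $v_p$ in \eqref{lp} involve \emph{two} derivatives of $w$ and $g$, each derivative landing either on the exponential (producing a factor $\tau$) or on the amplitude. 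Thus $v_p$ and $v_s$ are themselves of the form $e^{\tau\varphi_\tau}\tau^{2}(\text{computable leading amplitude}+O(\tau^{-1})+\text{remainder})$, and the remainder is controlled precisely because Proposition \ref{CGO_lin} gives the strengthened estimates \eqref{try} and \eqref{korc} — i.e. $\|\partial^\alpha R\|_{L^2(\Omega)}=O(\tau^{|\alpha|-3})$ for $|\alpha|\le 2$ and $\|\nabla R\|_{H^2(\Omega_0)}=O(1)$ — which is exactly why the weaker $H^2$-estimate from \cite{U-W} was not sufficient and why three terms $a_0,a_1,a_2$ were needed. Feeding these into the boundary integral, I would show $I_{sp}(\tau,t) = e^{2\tau(\,\text{Re}\,\varphi\,\text{-term})}\tau^{3}(\text{nonzero computable constant}+o(1))$ modulo the contribution of the full obstacle, with the exponential weight governed by $\sup_{x\in D}(x\cdot\rho) - t = h_D(\rho)-t$ in the linear case and by $t - \inf_{x\in D}\log|x-x_0| = t - d_D(x_0)$ in the logarithmic case. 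The sign of this exponent gives the three regimes \eqref{main_behavior_1_sp}–\eqref{main_behavior_3_sp} (and \eqref{main_behavior_1_spi}–\eqref{main_behavior_3_spi}): decay when $t>h_D(\rho)$ (resp.\ $t<d_D(x_0)$), blow-up in the opposite range, and at the critical value $t=h_D(\rho)$ (resp.\ $t=d_D(x_0)$) the exponential factor is of order one and one is left with the polynomial-in-$\tau$ two-sided bounds, the upper bound $\tau^4$ coming from $\tau^{-1}\cdot\tau^{2}\cdot\tau^{2}\cdot(\text{geometric measure factor})$ and the lower bound from the non-degeneracy of the leading amplitude $a_0$ together with the coercivity established in the first step.

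The main obstacle I anticipate is the lower bound at the critical value $t=h_D(\rho)$ (resp.\ $t=d_D(x_0)$): one must show the leading-order boundary contribution does not cancel. This requires (i) that the principal amplitude $a_0$ (chosen non-vanishing, e.g.\ $a_0 = (z-\bar z)^{-1/2}\omega_0$ in the log case) produces a strictly signed quadratic form on the portion of $\partial D$ touching the supporting hyperplane or sphere, which is delicate because $\partial D$ is only Lipschitz, so the "contact set" may be merely a set of positive surface measure rather than a point, and one must argue via a local flattening and a stationary-phase / Laplace-type estimate that the oscillatory factor $e^{i\tau\sqrt{\tau^2+\kappa_l^2}\,x\cdot\rho^\perp}$ (resp.\ $e^{-i\tau\psi}$) does not kill the main term; and (ii) matching this with the coercivity estimate for the scattered field, where for the penetrable case the discontinuity of the Lam\'e coefficients forces one to work with $L^p$ rather than $L^2$ gradient bounds and to track the jump conditions carefully near the contact set. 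Everything else — the denseness/Herglotz approximation linking $I_{sp}$ to the boundary integral, the symmetry manipulations, and the routine term-by-term expansion — follows the template of Section 2 and of \cite{K-S-2}, \cite{S-Y}, and I would relegate the CGO remainder estimates \eqref{try}–\eqref{korc} to Section 6 and the appendix as already announced.
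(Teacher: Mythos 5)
Your overall template (Section~2 identities, CGOs of Proposition \ref{CGO_lin} with the strengthened remainder estimates \eqref{try}--\eqref{korc}, order count $v_p,v_s\sim\tau^2 e^{-\tau\varphi}$, reduction of the three regimes to the critical value via the exponential weight) matches the paper, but there is a genuine gap at the one step that makes the mixed indicators work: you never explain why the bilinear form $I(v_s,v_p)$, which couples two \emph{different} fields and has no sign a priori, is bounded from below. The paper's device is the decomposition $I(u_s,u_p)=-I(u_s,u_s)+I(u_s,u)$ with $u=u_p+u_s$ the full CGO: the diagonal term $-I(u_s,u_s)$ is coercive (Lemma \ref{ss_1} plus Korn in the penetrable case, the layer-potential estimates in the impenetrable case), and the cross term $I(u_s,u)$ is absorbed because $u$ involves one derivative fewer than $u_s=-\kappa_s^{-2}\mu_0^{-1/2}[\nabla(\nabla\cdot w)-\Delta w]$, so that $\|\nabla u\|_{L^2(D)}^2/\|\nabla u_s\|_{L^2(D)}^2=O(\tau^{-2})$ (Lemmas \ref{analysis_lin}, \ref{linear_u}, \ref{end_lem}); this is exactly what \eqref{sp_linear} encodes. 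Your phrase ``controlled from above and below by $\int_{\partial D}$ of products of the CGO with itself'' asserts the conclusion without this mechanism, and without it the lower bound in \eqref{main_behavior_2_sp} does not follow.

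Relatedly, the route you propose for the critical value $t=h_D(\rho)$ --- an exact asymptotic $I_{sp}=e^{2\tau(\cdot)}\tau^3(\mathrm{const}+o(1))$ obtained by a stationary-phase or Laplace-type analysis of the oscillatory boundary integral --- is both stronger than what the theorem asserts (note the statement only gives $c\tau^3\le|I_{sp}|\le C\tau^5$, a gap of $\tau^2$, precisely because no such asymptotic is extracted) and problematic on a merely Lipschitz interface, where the contact set and the phase have no smoothness to exploit. The paper avoids oscillation entirely: after the decomposition, the lower bound rests on volume integrals of non-negative quantities, $\|\nabla u_s\|_{L^2(D)}^2\ge C\tau^5\sum_j\iint_{|y'|<\delta}e^{-2\tau l_j(y')}dy'-o(1)$, combined with $\sum_j\iint_{|y'|<\delta}e^{-2\tau l_j(y')}dy'\ge C\tau^{-2}$, which uses only $l_j(y')\le C|y'|$, i.e.\ the Lipschitz character of $\partial D$ (see \eqref{low_v}). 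So the ``main obstacle'' you identify is an artifact of your formulation rather than of the problem; replacing your non-cancellation argument by the decomposition and the $L^2$ lower bounds above is what is needed to close the proof.
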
\\
From the above two theorems, we see that, in case of linear phase for a fixed direction $\rho$ (accordingly, in case of logarithmic phase for a fixed 
direction $x_0$), the behavior of the indicator function $I_{ij}$, where $ij = pp, ss, sp$ or $ps$, 
changes drastically
in terms of $\tau$: exponentially decaying if $t>h_D(\rho)$ (accordingly $t>d_D(x_0)$ for the logarithmic phase),
 polynomially behaving if $t=h_D(\rho)$ (accordingly $t=d_D(x_0)$ for logarithmic phase)
and exponentially growing if $t<h_D(\rho)$ (accordingly $t<d_D(x_0)$ for the logarithmic phase).
Using this property of the indicator functions we can reconstruct the support function $h_D(\rho), \rho\in\mathbb{S}^2$
(accordingly the distance $d_D(x_0), x_0\in\mathbb{R}^3\setminus\overline{ch(\Omega)}$, for the logarithmic phase)
from the farfield measurement. Finally, from this support function for linear phase, we can reconstruct the convex hull of D
and from the distance function for the logarithmic phase, we can, in addition to the convex hull, reconstruct parts of the non-convex part of the obstacle D.
Let us finish this section by rephrasing the formulas in Theorem \ref{theorem1} and Theorem \ref{theorem2} as follows:
$h_{D}(\rho)-t= \lim_{\tau \to \infty}
\frac{\log|I_{ij}(\tau,t)|}{2\tau},$ for $ij=pp,ss,sp,ps,$ when we use CGOs with linear phase and 
$t-d_D(x_0)= \lim_{\tau \to \infty}
\frac{\log|I_{ij}(\tau,t)|}{2\tau},$ for $ij=pp,ss,sp,ps,$ when we use CGOs with logarithmic phase. The formulas are easily deduced from \eqref{main_behavior_2_pp},\eqref{main_behavior_2_ppi},\eqref{main_behavior_2_sp} and \eqref{main_behavior_2_spi}
and the following identities
\begin{eqnarray}\label{lond}
 I_{ij}(\tau,t) = e^{2\tau(h_{D}(\rho)-t)} I_{ij}(\tau,h_{D}(\rho)),
\\ \label{newyo}
I_{ij}(\tau,t) = e^{2\tau(t-d_D(x_0))} I_{ij}(\tau,d_D(x_0)).
\end{eqnarray}
\end{section}
\section{Justification of the reconstruction schemes.}
In this section, we prove the above two theorems using all the CGOs for both the penetrable and impenetrable obstacle cases. 
For that we only focus the four points
\eqref{main_behavior_2_pp},\eqref{main_behavior_2_ppi},\eqref{main_behavior_2_sp} and \eqref{main_behavior_2_spi} since we have \eqref{lond} and \eqref{newyo}. In addition, the lower estimates in 
\eqref{main_behavior_2_pp},\eqref{main_behavior_2_ppi},\eqref{main_behavior_2_sp} and \eqref{main_behavior_2_spi} are the most difficult part since the upper bounds are easily
obtain using the Cauchy-Schwartz inequality, the well posedness of the forward problems and the upper estimate of the $H^1$-norms of the CGOs given in section 6.
So, we mainly focus in our proofs on the lower estimates.
\subsection{The penetrable obstacle case}
We consider $w$ as an incident field and $u^s(w)$ the scattered field, therefore the total field $\tilde w = w + u^s(w)$ satisfies the following problem
\begin{equation}  \label{Lame_Penetrable}
\begin{cases}
\nabla\cdot(\sigma(\tilde w)) + \kappa^2 \tilde w = 0, \; \mbox{ in } \mathbb{R}^3  \\
u^s(w) \ \ \text{satisfies the Kupradze radiation condition},
\end{cases}
\end{equation}
recalling that
$
 \sigma(\tilde w) = \lambda(\nabla\cdot\tilde w)I_3 + \mu(\nabla\tilde w + (\nabla\tilde w)^T).
$
The incident CGO field satisfies 
\begin{equation}\label{cgo_p}
 \nabla\cdot(\sigma_0(w)) + \kappa^2 w = 0 \ \ \text{in} \ \ \Omega,
\end{equation}
where 
$
 \sigma_0(v) = \lambda_0(\nabla\cdot v)I_3 + 2\mu_0\epsilon(v)
$
for any displacement field $v$. Accordingly, we will use $\sigma_D(v)$ to denote $\sigma(v)- \sigma_0(v),$ i.e.
$
 \sigma_D(v) = \lambda_D(\nabla\cdot v)I_3 + 2\mu_D\epsilon(v).
$
Note that for a matrix $A=(a_{ij})$, we use $\vert A\vert$ to denote $(\sum_{i,j}|a_{ij}|^{2})^{\frac{1}{2}}$. 
For any matrices $A=(a_{ij})$ and $B=(b_{ij})$ we define the product as follows 
\begin{equation}\label{trace}
 A\cdot B^T := tr(AB) 
\end{equation}
where $tr(A)$ is the trace of the matrix $A$.
Also frequently we will use the following basic identity:
\begin{equation}\label{trace_identity}
 \sigma(u)\cdot (\nabla v)^T = \sigma(v)\cdot (\nabla u)^T
\end{equation}
and Betti's identity
\begin{equation}\label{betti_identity}
 \int_{\Omega}(\nabla\cdot\sigma(u))\cdot v dx = -\int_{\Omega} \sigma(u)\cdot (\nabla v)^T dx + \int_{\partial \Omega}(\sigma(u)\cdot\nu)\cdot v ds(x).
\end{equation}
\begin{lemma}\label{lemma 4.1}
 We have the following identity
\[
 tr(\sigma(u)\nabla u) = \frac{3\lambda + 2\mu}{3}\vert\nabla\cdot u\vert^2 + 2\mu\vert\epsilon(u)-\frac{\nabla\cdot u}{3}I_3\vert^2.
\]
\end{lemma}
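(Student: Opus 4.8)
The plan is to prove this as a pure linear-algebra identity for the symmetric stress tensor $\sigma(u) = \lambda(\nabla\cdot u)I_3 + 2\mu\epsilon(u)$, valid pointwise. First I would note that, because $\sigma(u)$ is symmetric, $tr(\sigma(u)\nabla u) = \sigma(u)\cdot(\nabla u)^T = \sigma(u)\cdot(\epsilon(u))^T$, since the antisymmetric part of $\nabla u$ contributes nothing when contracted against the symmetric $\sigma(u)$. Hence it suffices to compute $tr(\sigma(u)\,\epsilon(u))$, which by definition of $\sigma(u)$ equals $\lambda(\nabla\cdot u)\,tr(\epsilon(u)) + 2\mu\,tr(\epsilon(u)^2)$. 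Using $tr(\epsilon(u)) = \nabla\cdot u$, this is $\lambda|\nabla\cdot u|^2 + 2\mu\,tr(\epsilon(u)^2)$.

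Next I would introduce the deviatoric part of the strain, $\epsilon^{\mathrm{dev}}(u) := \epsilon(u) - \tfrac{\nabla\cdot u}{3}I_3$, which is trace-free. The point of this decomposition is the orthogonality $tr(\epsilon^{\mathrm{dev}}(u)\cdot \tfrac{\nabla\cdot u}{3}I_3) = \tfrac{\nabla\cdot u}{3}\,tr(\epsilon^{\mathrm{dev}}(u)) = 0$. Expanding $\epsilon(u) = \epsilon^{\mathrm{dev}}(u) + \tfrac{\nabla\cdot u}{3}I_3$ and using $tr(I_3) = 3$, I get
\[
tr(\epsilon(u)^2) = tr\big((\epsilon^{\mathrm{dev}}(u))^2\big) + \frac{|\nabla\cdot u|^2}{9}\cdot 3 = |\epsilon^{\mathrm{dev}}(u)|^2 + \frac{|\nabla\cdot u|^2}{3},
\]
where I have identified $tr((\epsilon^{\mathrm{dev}}(u))^2) = |\epsilon^{\mathrm{dev}}(u)|^2$ using the symmetry of $\epsilon^{\mathrm{dev}}(u)$ and the matrix-norm convention from \eqref{trace}. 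Substituting back yields
\[
tr(\sigma(u)\nabla u) = \lambda|\nabla\cdot u|^2 + 2\mu\Big(|\epsilon^{\mathrm{dev}}(u)|^2 + \frac{|\nabla\cdot u|^2}{3}\Big) = \frac{3\lambda + 2\mu}{3}|\nabla\cdot u|^2 + 2\mu\Big|\epsilon(u) - \frac{\nabla\cdot u}{3}I_3\Big|^2,
\]
which is exactly the claimed identity.

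There is no real obstacle here; the only subtlety is the bookkeeping of conventions — being careful that $A\cdot B^T = tr(AB)$ from \eqref{trace}, that $|A|^2 = A\cdot \bar A^T$ with the Frobenius norm, and that for a symmetric matrix $S$ one genuinely has $tr(S\,\nabla u) = tr(S\,\epsilon(u))$ because the symmetric and antisymmetric parts of $\nabla u$ split the contraction. If $u$ is complex-valued one should interpret $|\,\cdot\,|^2$ with the complex modulus, but the algebraic manipulation is identical since it only uses bilinearity of the trace and the trace-free property of the deviatoric tensor; no conjugation enters the chain of equalities as written. The identity is then purely formal and holds at every point $x$.
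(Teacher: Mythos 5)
Your proof is correct and follows essentially the same route as the paper: both rest on splitting $\epsilon(u)$ into its trace part and its trace-free (deviatoric) part and exploiting their orthogonality under the trace pairing. The only difference is that the paper quotes the resulting algebraic identity $\alpha\vert tr(A)\vert^2 + 2\beta\vert \mathrm{Sym}\, A\vert^2 = \frac{3\alpha + 2\beta}{3}\vert tr(A)\vert^2 + 2\beta\vert \mathrm{Sym}\, A - \frac{tr(A)}{3}I_3\vert^2$ from Ikehata and substitutes $A=\nabla u$, $\alpha=\lambda$, $\beta=\mu$, whereas you derive it from scratch (also making explicit the symmetric/antisymmetric reduction and the real-versus-complex conjugation convention, which the paper leaves implicit and which matches how the lemma is actually applied in \eqref{p8}).
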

\begin{proof}
 Define $Sym \nabla u := \frac{1}{2}(\nabla u + (\nabla u)^T) = \epsilon(u).$ For any $\alpha, \beta$ and a matrix $A$, we have the following
identity, see \cite{Ikeha},
\begin{equation}\label{0p1}
 \alpha\vert tr(A)\vert^2 + 2\beta\vert Sym A\vert^2 = \frac{3\alpha + 2\beta}{3}\vert tr(A)\vert^2 + 2\beta\vert Sym A - \frac{tr(A)}{3}I_3\vert^2.
\end{equation}
 Substituting $A = \nabla u, \alpha = \lambda, \beta = \mu$ in \eqref{0p1}, we obtain
\[
 \begin{split}
  tr(\sigma(u)\nabla u)
& = \lambda \vert\nabla\cdot u\vert^2 + 2\mu\vert\epsilon(u)\vert^2 \\
& = \frac{3\lambda + 2\mu}{3}\vert\nabla\cdot u\vert^2 + 2\mu\vert\epsilon(u)-\frac{\nabla\cdot u}{3}I_3\vert^2.
 \end{split}
\]
\end{proof}\\
Let $v$ and $w$ be two incident waves. We set $I(v,w):=\int_D[u^s(v)\cdot(\overline{\sigma(w)}\cdot\nu)-\overline{v}\cdot(\sigma(u^s(w))\cdot\nu)]ds(x).$
Hence from \eqref{log1}, \eqref{log2}, \eqref{she1} and \eqref{she2}, we have 
\begin{equation}
I_{ss}(\tau,t)=I(v_s,v_s), \ I_{pp}(\tau,t)=I(v_p,v_p), \ I_{ps}(\tau,t)=I(v_p,v_s) \ \text{and}\ \ I_{sp}(\tau,t)=I(v_s,v_p).
\end{equation}
\subsubsection{Some key inequalities}
\ ~ \
\vspace{1mm}
\begin{lemma}\label{ss_1} Let $v$ and $w$ be two incident waves.
 We have the following estimates for the indicator function
\[
 I(v,w) = - \int_{\Omega}\sigma_D(w)\cdot (\nabla \bar v)^T dx - \int_{\Omega}\sigma_D(u^s(w))\cdot (\nabla \bar v)^T dx
\]
and
\begin{equation}\label{inss}
 \begin{split}
 -I(v,v) \geq
& \int_D \frac{4\mu_0\mu_D}{3\mu} \vert \epsilon(v)\vert^2 dx - \int_D \frac{4\mu_0\mu_D}{9\mu} \vert (\nabla\cdot v)I_3\vert^2 dx - \kappa^2\int_{\Omega} \vert u^s(v)\vert^2dx \\
& - \int_{\partial\Omega} (\sigma(u^s(v))\cdot \nu)\cdot\overline{u^s(v)}ds(x). \\
\end{split}
\end{equation}
\end{lemma}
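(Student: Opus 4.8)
The plan is to manipulate the boundary integral $I(v,w)$ by reducing it to volume integrals over $\Omega$ (equivalently over $D$, since outside $D$ the Lamé coefficients are constant and the jumps $\lambda_D,\mu_D$ vanish). First I would apply Betti's identity \eqref{betti_identity} to the total field $\tilde w = w + u^s(w)$, which satisfies $\nabla\cdot\sigma(\tilde w)+\kappa^2\tilde w=0$ in $\mathbb{R}^3$, and to the incident CGO field $w$, which satisfies $\nabla\cdot\sigma_0(w)+\kappa^2 w=0$ in $\Omega$ by \eqref{cgo_p}. Pairing these equations against $\bar v$ and subtracting, the $\kappa^2$-terms cancel because both fields carry the same frequency, the boundary terms on $\partial\Omega$ cancel since $w$ and $\tilde w$ have the same Cauchy data there (recall $u^s(w)$ has no sources inside $\Omega\setminus D$), and what survives is exactly $\int_{\partial D}[u^s(w)\cdot(\overline{\sigma(w)}\cdot\nu)-\dots]$ rewritten via \eqref{trace_identity} as $-\int_\Omega \sigma_D(w)\cdot(\nabla\bar v)^T dx - \int_\Omega \sigma_D(u^s(w))\cdot(\nabla\bar v)^T dx$. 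This is the first displayed identity in the lemma; the key points are that $\sigma-\sigma_0=\sigma_D$ is supported in $\overline D$ and that the symmetry identity \eqref{trace_identity} lets me trade the strain of $v$ against the stress of $w$.

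Next, for the coercivity estimate \eqref{inss} I specialize to $w=v$ and split $I(v,v)=-\int_\Omega\sigma_D(v)\cdot(\nabla\bar v)^T - \int_\Omega\sigma_D(u^s(v))\cdot(\nabla\bar v)^T$. For the first term I use $\sigma_D(v)\cdot(\nabla\bar v)^T = \lambda_D|\nabla\cdot v|^2 + 2\mu_D|\epsilon(v)|^2$ and then the algebraic identity \eqref{0p1} from Lemma \ref{lemma 4.1} with $\alpha=\lambda_D,\beta=\mu_D$, which rewrites this as $\frac{3\lambda_D+2\mu_D}{3}|\nabla\cdot v|^2 + 2\mu_D|\epsilon(v)-\frac{\nabla\cdot v}{3}I_3|^2 \ge 0$ using the standing assumptions $2\mu_D+3\lambda_D\ge 0$ and $\mu_D>0$; this gives a nonnegative lower bound contribution but not yet the clean $|\epsilon(v)|^2$ term I want. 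To produce the displayed right-hand side I instead need to express things through the full coefficients: writing $\mu_D = \mu - \mu_0$ one has identities of the type $\frac{\mu_0\mu_D}{\mu} = \mu_D - \frac{\mu_D^2}{\mu}$, and more to the point the combination $\frac{4\mu_0\mu_D}{3\mu}|\epsilon(v)|^2 - \frac{4\mu_0\mu_D}{9\mu}|(\nabla\cdot v)I_3|^2$ should arise as a lower bound for $\mathrm{Re}\big(\sigma_D(v)\cdot(\nabla\bar v)^T\big)$ after completing squares and discarding the manifestly nonnegative deviatoric piece weighted by $\mu$ rather than $\mu_0$; the factor $\mu_0/\mu$ enters precisely because we want the constant in front to be controlled by quantities that will later survive taking $v$ to be a CGO. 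For the second term, $-\int_\Omega\sigma_D(u^s(v))\cdot(\nabla\bar v)^T$, I would again use Betti's identity but now moving the derivatives onto $u^s(v)$: since $\sigma_D(v) = \sigma(v)-\sigma_0(v)$ and using the equations satisfied by $v$ and $u^s(v)$ one converts this into $-\kappa^2\int_\Omega|u^s(v)|^2 - \int_{\partial\Omega}(\sigma(u^s(v))\cdot\nu)\cdot\overline{u^s(v)}$ plus a term that recombines with the first piece — this is where the $-\kappa^2\int_\Omega|u^s(v)|^2$ and the $\partial\Omega$ boundary term in \eqref{inss} come from.

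The main obstacle I anticipate is bookkeeping the cross terms between the two pieces $-\int\sigma_D(v)\cdot(\nabla\bar v)^T$ and $-\int\sigma_D(u^s(v))\cdot(\nabla\bar v)^T$ so that, after using $\tilde w = v + u^s(v)$ and the symmetry \eqref{trace_identity}, the quadratic form in $\nabla v$ and $\nabla u^s(v)$ can be rearranged into "a manifestly nonnegative part in $\epsilon(v)$ with the stated $\mu_0\mu_D/\mu$ weights" plus "a discarded nonnegative part" plus "the two explicit error terms $-\kappa^2\int|u^s(v)|^2$ and the $\partial\Omega$ boundary integral." Concretely, one writes $\sigma_D(\tilde w)\cdot(\nabla\overline{\tilde w})^T = \sigma_D(v)\cdot(\nabla\bar v)^T + (\text{cross}) + \sigma_D(u^s(v))\cdot(\nabla\overline{u^s(v)})^T$, applies Lemma \ref{lemma 4.1}–type completion of squares to the $\tilde w$ form, and then expresses the unwanted $u^s(v)$-quadratic and cross contributions back in terms of boundary data and the $\kappa^2$ term via Betti's identity on $\Omega$. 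Getting all signs right — in particular checking that the deviatoric term one throws away is indeed weighted by a nonnegative coefficient under the hypotheses $2\mu_D+3\lambda_D\ge 0$, $\mu_D>0$, $\mu>0$ — is the delicate part; the rest is a sequence of integrations by parts that are routine once the algebraic identity \eqref{0p1} is in hand. I would present this as: (i) derive the volume form of $I(v,w)$; (ii) set $w=v$, apply \eqref{0p1}; (iii) handle the $u^s(v)$ term by Betti on $\Omega$; (iv) collect terms and drop the nonnegative deviatoric remainder to obtain \eqref{inss}.
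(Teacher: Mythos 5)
Your derivation of the volume identity for $I(v,w)$ matches the paper's: two applications of Betti's identity (to $\tilde w$ paired with $\bar v$, and to $v$ paired with $\tilde w$), cancellation of the incident-field boundary terms on $\partial\Omega$ by reciprocity between the two background solutions, and the observation that $\sigma-\sigma_0=\sigma_D$ is supported in $\overline D$. That half is sound.

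The gap is in where the weight $\mu_0\mu_D/\mu$ comes from. You assert that $\frac{4\mu_0\mu_D}{3\mu}|\epsilon(v)|^2-\frac{4\mu_0\mu_D}{9\mu}|(\nabla\cdot v)I_3|^2$ "should arise as a lower bound for $\mathrm{Re}(\sigma_D(v)\cdot(\nabla\bar v)^T)$ after completing squares." It cannot: that term by itself is exactly $\frac{3\lambda_D+2\mu_D}{3}|\nabla\cdot v|^2+2\mu_D|\epsilon(v)-\frac{\nabla\cdot v}{3}I_3|^2$ and no factor $\mu_0/\mu$ can appear from it alone; more importantly, the dangerous piece of $I(v,v)$ is the sign-indefinite cross term $-\int_\Omega\sigma_D(u^s(v))\cdot(\nabla\bar v)^T$, and your proposed conversion of it into only $-\kappa^2\int_\Omega|u^s(v)|^2$ plus the $\partial\Omega$ boundary integral omits the essential extra piece. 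The paper's mechanism (after Ikehata) is to symmetrize so that
\[
-I(v,v)=\int_\Omega\overline{\sigma_D(\tilde v)}\cdot(\nabla\tilde v)^T\,dx+\int_\Omega\sigma_0(u^s(v))\cdot(\nabla\overline{u^s(v)})^T\,dx-\kappa^2\int_\Omega|u^s(v)|^2dx-\int_{\partial\Omega}(\sigma(u^s(v))\cdot\nu)\cdot\overline{u^s(v)}\,ds,
\]
where the second, \emph{background-coefficient} quadratic form of the scattered field is generated by the Betti identity for $\kappa^2\int_\Omega|u^s(v)|^2$; it is not an unwanted contribution to be pushed back to the boundary but the indispensable partner of the $\sigma_D(\tilde v)$ form. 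Applying Lemma \ref{lemma 4.1} to both forms (with $(\lambda_D,\mu_D)$ and $(\lambda_0,\mu_0)$ respectively), writing $u^s(v)=\tilde v-v$, and completing squares in the pair $(\tilde v,v)$, the coupling $2\mu_D|B_2|^2+2\mu_0|B_1-B_2|^2$ with $B_1=\epsilon(v)-\frac{\nabla\cdot v}{3}I_3$, $B_2=\epsilon(\tilde v)-\frac{\nabla\cdot\tilde v}{3}I_3$ leaves the residual $\bigl(2\mu_0-\tfrac{(2\mu_0)^2}{2\mu}\bigr)|B_1|^2=\tfrac{2\mu_0\mu_D}{\mu}|B_1|^2$ — this is where $\mu_0/\mu$ is born, and similarly for the divergence terms. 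Your closing paragraph gestures at this structure, but as literally stated your plan would either leave a lower bound in terms of the unknown total field $\tilde v$ (if you complete squares in the $\sigma_D(\tilde v)$ form alone) or an uncontrolled cross term; the step you are missing is bringing in $\sigma_0(u^s(v))\cdot(\nabla\overline{u^s(v)})^T$ before completing squares.
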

\begin{proof}
 We have $\tilde w= w + u^s(w)$ and $\tilde v= v + u^s(v)$. So, from Betti's identity we obtain
\begin{equation}\label{p1}
\begin{split}
  \int_{\partial \Omega}(\sigma(\tilde w)\cdot\nu)\cdot \overline{v} ds(x) 
& = \int_{\Omega}(\nabla\cdot\sigma(\tilde w))\cdot \overline{v} dx + \int_{\Omega} \sigma(\tilde w)\cdot (\nabla\overline{v})^T dx \\
 & = -\kappa^2\int_{\Omega} \tilde w\cdot\overline{v} + \int_{\Omega} \sigma(\tilde w)\cdot (\nabla \overline{v})^T dx.
\end{split}
\end{equation}
On the other hand, again applying Betti's identity we have
\begin{equation}\label{p2}
 \begin{split}
\int_{\partial\Omega}(\overline{\sigma(v)}\cdot\nu)\cdot\tilde w ds(x)
& = \int_{\partial\Omega} (\overline{\sigma_0(v)}\cdot\nu)\cdot\tilde w ds(x)
 = \int_{\Omega}(\nabla\cdot\overline{\sigma_0(v)})\cdot\tilde w dx + \int_{\Omega}\overline{\sigma_0(v)}\cdot(\nabla\tilde w)^T dx \\
& = -\kappa^2 \int_{\Omega} \overline{v}\cdot\tilde w dx + \int_{\Omega}\overline{\sigma_0(v)}\cdot (\nabla \tilde w)^T dx 
\end{split}
\end{equation}
Subtracting \eqref{p1} and \eqref{p2} gives
\begin{equation}\label{p3}
 \begin{split}
  \int_{\partial \Omega}(\sigma(\tilde w)\cdot\nu)\cdot \overline{v} ds(x) - \int_{\partial\Omega}(\overline{\sigma(v)}\cdot\nu)\cdot\tilde w ds(x)
& = \int_{\Omega}(\sigma(\tilde w)- \sigma_0(\tilde w)) \cdot (\nabla \overline{v})^T dx\\
& = \int_{\Omega} \sigma_D(\tilde w)\cdot(\nabla \overline{v})^T dx
 = \int_{\Omega} \overline{\sigma_D(v)}\cdot(\nabla \tilde w)^T dx.
 \end{split}
\end{equation}
Therefore replacing $\tilde w$ by $w + u^s(w)$ in \eqref{p3}, we obtain
\begin{equation}\label{p4}
\begin{split}
& I(v,w) \\
& := \int_{\partial\Omega}u^s(w)\cdot(\overline{\sigma(v)}\cdot\nu)ds(x) - \int_{\partial\Omega} \overline{v}\cdot(\sigma(u^s(w))\cdot\nu)ds(x) \\
& = \int_{\partial\Omega}(\sigma(w)\cdot\nu)\cdot \overline{v}ds(x)-\int_{\partial\Omega}(\overline{\sigma(v)}\cdot\nu)\cdot w ds(x)-\int_{\Omega}\sigma_D(w)\cdot (\nabla \overline{v})^Tdx-\int_{\Omega}\sigma_D(u^s(w))\cdot(\nabla \overline{v})^Tdx.
\end{split}
 \end{equation}
Also $v$ and $w$ satisfies the following equations
\begin{eqnarray}\label{1L4.2}
 \nabla\cdot\sigma_0(v) + \kappa^2 v = 0,
\\ \label{2L4.2}
 \nabla\cdot\sigma_0(w) + \kappa^2 w = 0.
\end{eqnarray}
Now, multiplying \eqref{1L4.2} by $w$, \eqref{2L4.2} by $\bar v$ and doing integration by parts we end up with
\begin{eqnarray}\label{3L4.2}
 -\int_{\Omega}\overline{\sigma_0(v)}\cdot(\nabla w)^T dx + \int_{\partial\Omega}(\overline{\sigma_0(v)}\cdot\nu)\cdot w ds(x) + \kappa^2\int_{\Omega}\overline{v}\cdot w dx &=& 0, \ \text{and}\
\\ \label{4L4.2}
 -\int_{\Omega}\overline{\sigma_0(v)}\cdot(\nabla w)^T dx + \int_{\partial\Omega}(\sigma_0(w)\cdot\nu)\cdot \overline{v} ds(x) + \kappa^2\int_{\Omega}\overline{v}\cdot w dx &=& 0.
\end{eqnarray} 
Subtracting \eqref{3L4.2} and \eqref{4L4.2} we obtain
\begin{equation}\label{5L4.2}
\int_{\partial\Omega}(\sigma_0(w)\cdot\nu)\cdot \overline{v} ds(x)- \int_{\partial\Omega}(\overline{\sigma_0(v)}\cdot\nu)\cdot w ds(x)= 0.
\end{equation}
Therefore substituting \eqref{5L4.2} in \eqref{p4} we have the formula for $I(v,w)$
\begin{equation}\label{6L4.2}
 I(v,w) = -\int_{\Omega}\sigma_D(w)\cdot (\nabla \overline{v})^Tdx-\int_{\Omega}\sigma_D(u^s(w))\cdot(\nabla \overline{v})^Tdx.
\end{equation}
 Now we look for the estimate of $I(v,v).$ Replacing $w$ by $v$, we have
\begin{equation}\label{p5}
 \begin{split}
I(v,v) 
& = -\int_{\Omega}\sigma_D(v)\cdot (\nabla \overline{v})^Tdx-\int_{\Omega}\sigma_D(u^s(v))\cdot(\nabla \overline{v})^Tdx 
 = -\int_{\Omega}\sigma_D(\tilde v)\cdot (\nabla \overline{v})^Tdx \\
 &= -\int_{\Omega}\overline{\sigma_D(v)}\cdot (\nabla \tilde v)^Tdx 
 = -\int_{\Omega}\overline{\sigma_D(\tilde v)}\cdot (\nabla \tilde v)^Tdx+\int_{\Omega}\overline{\sigma_D(u^s(v))}\cdot(\nabla \tilde v)^Tdx  \\
& = -\int_{\Omega}\overline{\sigma_D(\tilde v)}\cdot (\nabla\tilde v)^Tdx+\int_{\Omega}\sigma_D(\tilde v)\cdot(\nabla \overline{u^s(v)})^Tdx.
\end{split}
\end{equation}
On the other hand we have
\[
  \kappa^2\int_{\Omega}\vert u^s(v)\vert^2 dx 
 = \int_{\Omega}(\sigma(\tilde v)-\sigma_0(v))\cdot(\nabla \overline{u^s(v)})^Tdx - \int_{\partial\Omega}((\sigma(\tilde v)-\sigma_0(v))\cdot\nu)\cdot \overline{u^s(v)}ds(x).  
\]
Substituting $v=\tilde v- u^s(v)$ in the 1st term and $\tilde v= v+ u^s(v)$ in the 2nd term in right side of the above identity, we obtain
\begin{equation}\label{p6}
\begin{split}
  \kappa^2\int_{\Omega}\vert u^s(v)\vert^2 dx 
& = \int_{\Omega}(\sigma(\tilde v)-\sigma_0(\tilde v)+ \sigma_0(u^s(v)))\cdot(\nabla \overline{u^s(v)})^Tdx - \int_{\partial\Omega}(\sigma(u^s(v))\cdot\nu)\cdot \overline{u^s(v)}ds(x) \\
& = \int_{\Omega}\sigma_D(\tilde v)\cdot(\nabla \overline{u^s(v)})^Tdx + \int_{\Omega}\sigma_0(u^s(v))\cdot(\nabla \overline{u^s(v)})^Tdx - \int_{\partial\Omega}(\sigma(u^s(v)\cdot\nu))\cdot \overline{u^s(v)}ds(x).
\end{split}
\end{equation}
Combining \eqref{p5} and \eqref{p6}, we obtain
\begin{equation}\label{p7}
\begin{split}
 -I(v,v) 
& = \int_{\Omega}\overline{\sigma_D(\tilde v)}\cdot(\nabla \tilde v)^T dx + \int_{\Omega}\sigma_0(u^s(v))\cdot(\nabla \overline{u^s(v)})^Tdx - \kappa^2\int_{\Omega}\vert u^s(v)\vert^2dx \\
&- \int_{\partial\Omega}(\sigma(u^s(v))\cdot\nu)\cdot \overline{u^s(v)}ds(x). 
\end{split}
\end{equation}
Therefore from Lemma \ref{lemma 4.1}, we have
\begin{equation}\label{p8}
\begin{split}
 -I(v,v) =
 &  \int_{\Omega}\frac{3\lambda_D + 2\mu_D}{3}\vert\nabla\cdot \tilde v\vert^2dx + \int_{\Omega}2\mu_D\vert\epsilon(\tilde v)-\frac{\nabla\cdot \tilde v}{3}I_3\vert^2dx 
  + \int_{\Omega}\frac{3\lambda_0 + 2\mu_0}{3}\vert\nabla\cdot u^s(v)\vert^2dx \\
&+ \int_{\Omega}2\mu_0\vert\epsilon(u^s(v))-\frac{\nabla\cdot u^s(v)}{3}I_3\vert^2dx 
  - \kappa^2\int_{\Omega}\vert u^s(v)\vert^2dx - \int_{\partial\Omega}(\sigma(u^s(v))\cdot\nu)\cdot \overline{u^s(v)}ds(x).
\end{split}
\end{equation}
To estimate the first four integrals in the right hand side of \eqref{p8}, we follow few steps from [\cite{Ikeh}, Proposition 5.1].
Set $B_1 = \epsilon(v) - \frac{\nabla\cdot v}{3}I_3$ and $B_2 = \epsilon(\tilde v) - \frac{\nabla\cdot\tilde v}{3}I_3$.
So,
\[
 \begin{split}
& \frac{3\lambda_D + 2\mu_D}{3}\vert\nabla\cdot \tilde v\vert^2 + 2\mu_D\vert\epsilon(\tilde v)-\frac{\nabla\cdot \tilde v}{3}I_3\vert^2 
 + \frac{3\lambda_0 + 2\mu_0}{3}\vert\nabla\cdot u^s(v)\vert^2 + 2\mu_0\vert\epsilon(u^s(v))-\frac{\nabla\cdot u^s(v)}{3}I_3\vert^2 \\
& = \frac{3\lambda_D + 2\mu_D}{3}\vert\nabla\cdot\tilde v\vert^2 + 2\mu_D\vert B_2\vert^2 + \frac{3\lambda_0 + 2\mu_0}{3}\vert\nabla\cdot u^s(v)\vert^2 + 2\mu_0\vert B_1-B_2\vert^2 \\
& = \frac{3\lambda + 2\mu}{3}\vert\nabla\cdot\tilde v\vert^2-\frac{2(3\lambda_0 + 2\mu_0)}{3}(\nabla\cdot v)(\nabla\cdot\tilde v)
+ 2\mu\vert B_2\vert^2 - 2\mu_0 B_1\cdot B_2 + \frac{3\lambda_0 + 2\mu_0}{3}\vert\nabla\cdot v\vert^2 + 2\mu_0\vert B_1\vert^2 \\
& = \vert\sqrt{\frac{3\lambda + 2\mu}{3}}\nabla\cdot\tilde v - \sqrt{\frac{3}{3\lambda + 2\mu}}\frac{3\lambda_0 + 2\mu_0}{3}\nabla\cdot v\vert^2  + \vert\sqrt{2\mu}B_2 - \frac{2\mu_0}{\sqrt{2\mu}}B_1\vert^2 \\
& + \left\{ \frac{3\lambda_0 + 2\mu_0}{3} - \frac{3}{3\lambda + 2\mu}\left(\frac{3\lambda_0 + 2\mu_0}{3}\right)^2\right\}\vert \nabla\cdot v\vert^2 + \left\{2\mu_0 - \frac{(2\mu_0)2}{2\mu}\right\}\vert B_1\vert^2 \\
& \geq \frac{3\lambda_0 + 2\mu_0}{3(3\lambda + 2\mu)}\left\{3\lambda_D + 2\mu_D\right\} \vert \nabla\cdot v\vert^2 + \frac{2\mu_0\mu_D}{\mu}\vert \epsilon(v) - \frac{\nabla\cdot v}{3}I_3\vert^2.
\end{split}
\]
%
%
%
Hence
\begin{equation} \label{mou}
\begin{split}
 -I(v,v) 
& \geq \int_D \left\{\frac{3\lambda_0 + 2\mu_0}{3(3\lambda + 2\mu)}(3\lambda_D + 2\mu_D) \vert \nabla\cdot v\vert^2 + \frac{2\mu_0\mu_D}{\mu}\vert \epsilon(v) - \frac{\nabla\cdot v}{3}I_3\vert^2
\right\}dx \\
& -\kappa^2\int_{\Omega}\vert u^s(v)\vert^2dx - \int_{\partial\Omega}(\sigma(u^s(v))\cdot \nu)\cdot \overline{u^s(v)}ds(x).
\end{split}
\end{equation}
Therefore,
\begin{equation*}
 \begin{split}
&-I(v,v) \\
& \geq \int_D\frac{2\mu_0\mu_D}{\mu}\vert \epsilon(v) - \frac{\nabla\cdot v}{3}I_3\vert^2dx - \kappa^2\int_{\Omega}\vert u^s(v)\vert^2dx - \int_{\partial\Omega}(\sigma(u^s(v))\cdot \nu)\cdot \overline{u^s(v)}ds(x) \\
& \geq \int_D \frac{4\mu_0\mu_D}{3\mu} \vert \epsilon(v)\vert^2 dx - \int_D \frac{4\mu_0\mu_D}{9\mu} \vert (\nabla\cdot v)I_3\vert^2 dx - \kappa^2\int_{\Omega} \vert u^s(v)\vert^2dx - \int_{\partial\Omega} (\sigma(u^s(v))\cdot \nu)\cdot \overline{u^s(v)}ds(x).
\end{split}
\end{equation*}
\end{proof}
\begin{lemma}\label{4.3lem}
 We have 
\begin{equation}\label{4.3lem0}
 |\int_{\partial\Omega}(\sigma(u^s(v))\cdot\nu)\cdot \overline{u^s(v)}ds(x)| \leq C\mathcal{F}\Vert\nabla v\Vert_{L^2(D)}^{2}
\end{equation}
where $v$ is the incident field, $u^s(v)$ is the scattered field and $\mathcal{F}$ is defined by 
\[
 \mathcal{F} := \int_{B\setminus\overline{\Omega}}\Vert(\nabla\Phi(x,\cdot))^T\Vert_{L^2(D)}^{2}dx + \int_{B\setminus\overline{\Omega}}\Vert\nabla(\nabla\Phi(x,\cdot))^T\Vert_{L^2(D)}^{2}dx,
\]
with $B$ as any smooth domain containing $\overline{\Omega}$.
\end{lemma}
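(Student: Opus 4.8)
The plan is to represent the scattered field $u^s(v)$ as a volume potential supported in $\overline D$, exploit that $\partial\Omega$ lies at a positive distance from $D$ so that the relevant kernels and their derivatives are bounded there, absorb the boundary integral into the $H^1$-norm of $u^s(v)$ over $B\setminus\overline\Omega$ via the trace theorem, and finally control that norm by $\|\nabla v\|_{L^2(D)}$ through the well-posedness of the penetrable problem. Since $\lambda_D$ and $\mu_D$ vanish outside $D$, the matrix field $\sigma_D(\tilde v)$, extended by zero, is an $L^2$-field supported in $\overline D$, and subtracting $\nabla\cdot\sigma_0(v)+\kappa^2 v=0$ from $\nabla\cdot\sigma(\tilde v)+\kappa^2\tilde v=0$ shows that $u^s(v)$ is a radiating solution of $\nabla\cdot\sigma_0(u^s(v))+\kappa^2 u^s(v)=-\nabla\cdot\sigma_D(\tilde v)$. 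Hence, for $x\in\mathbb R^3\setminus\overline D$,
\[
 u^s(v)(x)=\int_D \sigma_D(\tilde v)(y):\nabla_y\Phi(x,y)\,dy ,
\]
where $\Phi$ is the fundamental matrix of $v\mapsto\nabla\cdot\sigma_0(v)+\kappa^2 v$. Because $v$, and hence $\tilde v$, is defined only on $\Omega$ and $u^s(v)$ is defined through the Herglotz limiting procedure of Section~2, this identity is first obtained for the entire solutions $v_{h^n}$ and then passed to the limit, using $v_{h^n}\to v$ in $C^\infty(\tilde\Omega)$ with $D\subset\subset\tilde\Omega$ together with the continuous dependence of the scattered field, which yield $\sigma_D(\tilde v_{h^n})\to\sigma_D(\tilde v)$ in $L^2(D)$ and $u^s(v_{h^n})\to u^s(v)$ locally uniformly away from $\overline D$.

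For $x\in B\setminus\overline\Omega$ one has $\mathrm{dist}(x,D)\ge\mathrm{dist}(\partial\Omega,D)>0$, so $\Phi(x,\cdot)$ and its $x$-derivatives are smooth on $\overline D$ and differentiation under the integral sign is legitimate. The Cauchy--Schwarz inequality gives, for such $x$,
\[
 |u^s(v)(x)|\le \|\nabla_y\Phi(x,\cdot)\|_{L^2(D)}\,\|\sigma_D(\tilde v)\|_{L^2(D)},\qquad
 |\nabla_x u^s(v)(x)|\le \|\nabla(\nabla_y\Phi(x,\cdot))\|_{L^2(D)}\,\|\sigma_D(\tilde v)\|_{L^2(D)} ,
\]
and squaring and integrating in $x$ over $B\setminus\overline\Omega$ produces $\|u^s(v)\|_{H^1(B\setminus\overline\Omega)}^2\le \mathcal F\,\|\sigma_D(\tilde v)\|_{L^2(D)}^2$, with $\mathcal F$ exactly the constant in the statement.

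Since $\partial\Omega\subset B\setminus\overline D$, where $\lambda=\lambda_0$, $\mu=\mu_0$, we have $\sigma(u^s(v))\cdot\nu=\sigma_0(u^s(v))\cdot\nu$ on $\partial\Omega$. As $u^s(v)\in H^1(B\setminus\overline\Omega)$ and $\nabla\cdot\sigma_0(u^s(v))=-\kappa^2 u^s(v)\in L^2(B\setminus\overline\Omega)$, the conormal derivative lies in $H^{-1/2}(\partial\Omega)$ with $\|\sigma_0(u^s(v))\cdot\nu\|_{H^{-1/2}(\partial\Omega)}\le C\,\|u^s(v)\|_{H^1(B\setminus\overline\Omega)}$, while the trace theorem gives $\|u^s(v)\|_{H^{1/2}(\partial\Omega)}\le C\,\|u^s(v)\|_{H^1(B\setminus\overline\Omega)}$. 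The $H^{-1/2}$--$H^{1/2}$ duality then yields
\[
 \Big|\int_{\partial\Omega}(\sigma(u^s(v))\cdot\nu)\cdot\overline{u^s(v)}\,ds(x)\Big|
 \le \|\sigma_0(u^s(v))\cdot\nu\|_{H^{-1/2}(\partial\Omega)}\,\|u^s(v)\|_{H^{1/2}(\partial\Omega)}
 \le C\,\mathcal F\,\|\sigma_D(\tilde v)\|_{L^2(D)}^2 .
\]

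It remains to pass from $\tilde v$ to $v$. First $\|\sigma_D(\tilde v)\|_{L^2(D)}\le C\,\|\nabla\tilde v\|_{L^2(D)}$ because $\lambda_D,\mu_D\in L^\infty$. Next, writing $\sigma(\tilde v)=\sigma_0(u^s(v))+\sigma_0(v)+\sigma_D(\tilde v)$ and using $\nabla\cdot\sigma_0(v)+\kappa^2 v=0$ in $\Omega$, one sees that $u^s(v)$ is the radiating solution of the transmission problem with interior forcing $-\nabla\cdot\sigma_D(v)$ supported in $\overline D$; since this forcing pairs against a test field $\phi$ as $-\int_D\sigma_D(v):\nabla\overline\phi$, it is controlled by $\|\nabla v\|_{L^2(D)}$, so the well-posedness of the penetrable problem gives $\|u^s(v)\|_{H^1(D)}\le C\,\|\nabla v\|_{L^2(D)}$ and hence $\|\nabla\tilde v\|_{L^2(D)}\le\|\nabla v\|_{L^2(D)}+\|\nabla u^s(v)\|_{L^2(D)}\le C\,\|\nabla v\|_{L^2(D)}$; combining the inequalities gives \eqref{4.3lem0}. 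The main obstacle is precisely this last point together with the representation above: because $v$ lives only on $\Omega$ and $u^s(v)$ is only defined as a Herglotz limit, the Lippmann--Schwinger identity and the a priori bound must be established along the approximating sequence with constants independent of $n$ and then passed to the limit, and one must be careful that the a priori estimate is stated in the gradient seminorm $\|\nabla v\|_{L^2(D)}$ — which is possible exactly because the forcing is in divergence form $\nabla\cdot\sigma_D(v)$, so that the constant part of $v$ does not contribute — rather than in a fuller Sobolev norm of $v$.
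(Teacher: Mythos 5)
Your argument is correct and rests on the same mechanism as the paper's proof: represent $u^s(v)$ as a volume potential over $D$ with an $L^2$ density built from $\sigma_D$, apply Cauchy--Schwarz in $y$ and integrate in $x$ over $B\setminus\overline{\Omega}$ to produce exactly the factor $\mathcal{F}$, and control the boundary pairing through the $H^{1/2}(\partial\Omega)$--$H^{-1/2}(\partial\Omega)$ duality and the trace theorem via $\Vert u^s(v)\Vert_{H^1(B\setminus\overline{\Omega})}$. The one genuine difference is the splitting of the operator. You keep the constant-coefficient operator $\nabla\cdot\sigma_0+\kappa^2$ on the left, so your density is $\sigma_D(\tilde v)$ with the \emph{total} field and your kernel is the explicit Kupradze tensor; this forces the extra closing step $\Vert\nabla u^s(v)\Vert_{L^2(D)}\leq C\Vert\nabla v\Vert_{L^2(D)}$, which you correctly obtain from well-posedness of the radiating transmission problem with the divergence-form source $-\nabla\cdot\sigma_D(v)$ (and your observation that only the gradient seminorm of $v$ enters is right). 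The paper instead keeps the full operator $\nabla\cdot\sigma+\kappa^2$ acting on $u^s(v)$, so the forcing is $-\nabla\cdot\sigma_D(v)$ with the \emph{incident} field only, the representation reads $u^s(v)(x)=\int_D\sigma_D(v(y))\cdot(\nabla\Phi(x,y))^T\,dy$ with $\Phi$ the fundamental tensor of the full operator, and the bound $\Vert\sigma_D(v)\Vert_{L^2(D)}\leq C\Vert\nabla v\Vert_{L^2(D)}$ follows immediately, with no forward a priori estimate; the constant in front of $\mathcal{F}$ then involves only trace/duality constants, which is slightly cleaner for the later step where $B$ is shrunk toward $\Omega$ to make $\mathcal{F}$ small, though your additional well-posedness constant is independent of $B$ and $\tau$, so the conclusion is unaffected. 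Your extra care with the Herglotz limiting procedure defining $u^s(v)$ is a point the paper glosses over; it is sound but not needed for the inequality itself.
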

\begin{proof}
 Let us take a ball $\tilde B$ such that $D\subset\subset\tilde B \subset\subset\Omega$. The scattering field $u^s(v)$ satisfies
\begin{equation}\label{4.3lem1}
 \begin{cases}
  \nabla\cdot\sigma(u^s(v)) + \kappa^2 u^s(v) = -\nabla\cdot\sigma_D(v), \; \mbox{ in } \mathbb{R}^3 \\
   u^s(v) \ \text{satisfies radiation condition} 
 \end{cases}
\end{equation}
and $\Phi(\cdot,\cdot)$ be the fundamental tensor for elasticity satisfies
\begin{equation}\label{4.3lem2}
 \nabla\cdot\sigma(\Phi(x,y)) + \kappa^2 \Phi(x,y) = \delta_x(y), \; \mbox{ in } \mathbb{R}^3
\end{equation}
where $\delta_x$ is the Dirac measure at $x$.\\
  \textbf{Step 1}\\
  First we show that for $x\in\mathbb{R}^3\setminus \overline{\tilde B}$, we have
\begin{equation}\label{4.3lem6}
 u^s(v)(x) = \int_D\sigma_D(v(y))\cdot(\nabla\Phi(x,y))^Tdy.
\end{equation}
 Indeed, take $x\in \mathbb{R}^3\setminus \overline{\tilde B}$. Then multiplying by $\Phi(x,y)$ in \eqref{4.3lem1} and doing integration by parts we obtain,
\begin{equation}
\begin{split}
& \int_{\tilde B} \sigma_D(v)\cdot (\nabla\Phi(x,y))^T dy  \\ 
& = - \int_{\tilde B}\sigma(u^s(v))\cdot(\nabla\Phi(x,y))^Tdy + \int_{\partial {\tilde B}} (\sigma(u^s(v))\cdot\nu)\cdot \Phi(x,y) ds(y) + \kappa^2\int_{\tilde B} u^s(v)\cdot\Phi(x,y)dy \\
& = - \int_{\tilde B}\sigma(\nabla\Phi(x,y))\cdot(u^s(v))^Tdy + \int_{\partial {\tilde B}} (\sigma(u^s(v))\cdot\nu)\cdot \Phi(x,y) ds(y) + \kappa^2\int_{\tilde B} u^s(v)\cdot\Phi(x,y)dy.
\end{split}
\end{equation}
Applying integration by parts and from \eqref{4.3lem2}, we have
\begin{equation}\label{4.3lem3}
 \int_{\tilde B} \sigma_D(v)\cdot (\nabla\Phi(x,y))^T dy = \int_{\partial {\tilde B}}(\sigma(u^s(v))\cdot\nu)\cdot\Phi(x,y)ds(y) - \int_{\partial {\tilde B}}(\sigma(\Phi(x,y))\cdot\nu)\cdot u^s(v)ds(y).
\end{equation}
On the other hand, consider a ball $B_R$ such that ${\tilde B} \subset B_R$ and take $x\in B_R\setminus\overline{\tilde B}.$ Now multiplying both sides in \eqref{4.3lem1} by $\Phi(x,y)$ and applying integration by parts, we obtain
\begin{equation}\label{4.3lem4}
\begin{split}
& -\int_{B_R\setminus(\overline{\tilde B}\cup\overline{B_{\epsilon}})} \sigma(\Phi(x,y))\cdot(\nabla u^s(v))^Tdy \\
 & + \int_{\partial {\tilde B} \cup \partial B_{\epsilon}} (\sigma(u^s(v))\cdot\nu)\cdot\Phi(x,y)ds(y)
+ \kappa^2 \int_{B_R\setminus(\overline{\tilde B}\cup\overline{B_{\epsilon}})} u^s(v)\cdot\Phi(x,y)dy = 0.
\end{split}
\end{equation}
Again doing integration by parts on the 1st term of \eqref{4.3lem4} and from \eqref{4.3lem2}, we have
\begin{equation}\label{4.3lem5}
 \int_{\partial {\tilde B}\cup\partial B_{\epsilon}}(\sigma(u^s(v))\cdot\nu)\cdot\Phi(x,y)ds(y) - \int_{\partial{\tilde B}\cup\partial B_{\epsilon}} (\sigma(\Phi(x,y))\cdot\nu)\cdot u^s(v)ds(y) = 0.
\end{equation}
Combining \eqref{4.3lem3} and \eqref{4.3lem5} together with the following relation, see for instance \cite{CK}
\[
 \lim_{\epsilon\rightarrow 0}\left[\int_{\partial B_{\epsilon}} (\sigma(\Phi(x,y))\cdot\nu)\cdot u^s(v)ds(y) - \int_{\partial B_{\epsilon}}(\sigma(u^s(v))\cdot\nu)\cdot\Phi(x,y)ds(y)\right] = u^s(v)(x)
\]
Hence, we obtain \eqref{4.3lem6}.\\
 \textbf{Step 2}\\
    We justify the estimate \eqref{4.3lem0}. Consider a smooth domain $B$ such that $\Omega\subset\subset B.$ Using Cauchy-Schwartz inequality and trace theorem we have
\begin{eqnarray}\label{4.3lem7}
 \vert \int_{\partial\Omega}(\sigma(u^s(v))\cdot\nu)\cdot \overline{u^s(v)}ds(x)\vert  
 \leq \Vert u^s(v)\Vert_{H^{\frac{1}{2}}(\partial\Omega)} \Vert \sigma(u^s(v))\cdot\nu\Vert_{H^{-\frac{1}{2}}(\partial\Omega)} 
 \leq C \Vert u^s(v)\Vert_{H^1(B\setminus \overline\Omega)}^{2}.
\end{eqnarray}
On the other hand, from \eqref{4.3lem6} using Cauchy-Schwartz inequality, we obtain for $x\in B\setminus\overline{\Omega}$
\begin{eqnarray*}
 \vert u^s(v)(x)\vert &\leq& \Vert\sigma_D(v)\Vert_{L^2(D)}\Vert(\nabla\Phi(x,\cdot))^T\Vert_{L^2(D)} 
\\ \vert\nabla u^s(v)(x)\vert &\leq& \Vert\sigma_D(v)\Vert_{L^2(D)}\Vert(\nabla(\nabla\Phi(x,\cdot))^T)\Vert_{L^2(D)}.
\end{eqnarray*}
Therefore from \eqref{4.3lem7} together with the estimate
\[
 \Vert u^s(v)\Vert_{H^1(B\setminus\overline{\Omega})}^{2} \leq \Vert\sigma_D(v)\Vert_{L^2(D)}^{2} \left\{\int_{B\setminus\overline{\Omega}}\Vert(\nabla\Phi(x,\cdot))^T\Vert_{L^2(D)}^{2}dx + \int_{B\setminus\overline{\Omega}}\Vert\nabla(\nabla\Phi(x,\cdot))^T\Vert_{L^2(D)}^{2}dx\right\}
\]
 we obtain our required estimate \eqref{4.3lem0}.
\end{proof}
\begin{lemma}\label{Lpest}($L^2-L^q$-estimate). 
There exists $1\leq q_0 < 2$ such that for $q_0 < q \leq 2,$
\[
 \Vert u^s(v)\Vert_{L^2(\Omega)} \leq C \Vert \nabla v\Vert_{L^q(D)}
\]
with a positive constant $C$.
\end{lemma}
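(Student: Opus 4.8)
The plan is to prove this by a duality argument, transferring the needed regularity onto the adjoint problem. Write $U:=u^s(v)$; by \eqref{4.3lem1} it is the radiating solution of $\nabla\cdot\sigma(U)+\kappa^2 U=-\nabla\cdot\sigma_D(v)$ in $\mathbb R^3$, and $\sigma_D(v)=\lambda_D(\nabla\cdot v)I_3+2\mu_D\epsilon(v)$ is supported in $\overline D$ with $|\sigma_D(v)|\le C|\nabla v|$ since $\lambda_D,\mu_D\in L^\infty$. Because $\|U\|_{L^2(\Omega)}=\sup\{\,|\int_\Omega U\cdot g\,dx|\ :\ g\in L^2(\Omega),\ \|g\|_{L^2(\Omega)}\le 1\,\}$, it is enough to bound $|\int_\Omega U\cdot g\,dx|$ by $C\|\nabla v\|_{L^q(D)}$ uniformly in such $g$.

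First I would, for $g\in L^2(\Omega)$ extended by zero to $\mathbb R^3$, let $\phi_g$ be the (unique) radiating solution of $\nabla\cdot\sigma(\phi_g)+\kappa^2\phi_g=g$ in $\mathbb R^3$; this exists by the well-posedness of the penetrable direct problem and satisfies $\|\phi_g\|_{H^1(B)}\le C\|g\|_{L^2(\Omega)}$ on any fixed bounded ball $B\supset\supset\Omega$. Using Betti's identity \eqref{betti_identity} together with \eqref{trace_identity} on a large ball $B_R$ and letting $R\to\infty$ — the $\partial B_R$-contribution vanishes because $U$ and $\phi_g$ are both radiating, exactly as in the derivation of \eqref{Farfield-Nearfield} but with no complex conjugation — one obtains $\int_{\mathbb R^3}U\cdot g\,dx=\int_{\mathbb R^3}\big(\nabla\cdot\sigma(U)+\kappa^2 U\big)\cdot\phi_g\,dx=-\int_{\mathbb R^3}\big(\nabla\cdot\sigma_D(v)\big)\cdot\phi_g\,dx=\int_D\sigma_D(v)\cdot(\nabla\phi_g)^T\,dx$, the last equality being integration by parts with no boundary term since $\sigma_D(v)$ is an $L^2$ matrix field supported in $\overline D\subset\subset\mathbb R^3$. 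Hölder's inequality then gives, for $1<q\le2$ with conjugate exponent $q'$, $|\int_\Omega U\cdot g\,dx|\le\|\sigma_D(v)\|_{L^q(D)}\,\|\nabla\phi_g\|_{L^{q'}(D)}\le C\|\nabla v\|_{L^q(D)}\,\|\nabla\phi_g\|_{L^{q'}(D)}$.

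It then remains to find $q_0\in(1,2)$ so that $\|\nabla\phi_g\|_{L^{q'}(D)}\le C\|g\|_{L^2(\Omega)}$ for $q_0<q\le2$ (equivalently, for $2\le q'<q_0'$). Here I would invoke the Meyers--Gehring higher integrability for the Lam\'e system with merely bounded measurable, elliptic coefficients: the pointwise inequality $\lambda|\nabla\cdot\phi|^2+2\mu|\epsilon(\phi)|^2=\frac{2\mu+3\lambda}{3}|\nabla\cdot\phi|^2+2\mu|\epsilon(\phi)-\frac{\nabla\cdot\phi}{3}I_3|^2\ge c|\epsilon(\phi)|^2$ (valid under $\mu>0$, $2\mu+3\lambda>0$), combined with Korn's second inequality, yields the Caccioppoli inequality, hence by Gehring's lemma an exponent $p_0=p_0(\lambda,\mu)>2$ and, for a ball $B_{2r}$ with $D\subset\subset B_r\subset\subset B_{2r}\subset\subset\Omega$, the interior estimate $\|\nabla\phi_g\|_{L^{p}(B_r)}\le C\big(\|\nabla\phi_g\|_{L^2(B_{2r})}+\|g\|_{L^2(B_{2r})}\big)$ for $2\le p\le p_0$. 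Together with $\|\nabla\phi_g\|_{L^2(B_{2r})}\le\|\phi_g\|_{H^1(B_{2r})}\le C\|g\|_{L^2(\Omega)}$ this gives $\|\nabla\phi_g\|_{L^p(D)}\le C\|g\|_{L^2(\Omega)}$ for $2\le p\le p_0$. Choosing $q_0:=p_0/(p_0-1)$ and returning to the previous paragraph, $\|u^s(v)\|_{L^2(\Omega)}\le C\|\nabla v\|_{L^q(D)}$ for all $q_0<q\le2$, which is the claim.

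The main obstacle is this last step: having the higher-integrability exponent $p_0>2$ available for the Lam\'e operator with Lam\'e coefficients discontinuous across the (only Lipschitz) interface $\partial D$. This is exactly the point where the coefficient and jump hypotheses are used, and it plays, for the elasticity system, the role that the $L^p$-gradient estimates play in the acoustic and Maxwell treatments \cite{S-Y, K-S-2}; one may cite the corresponding elastic $L^p$-regularity result as in \cite{K-S}. The remaining ingredients — the duality identity, the Betti pairing with the vanishing radiation terms, and H\"older on the bounded set $D$ — are routine.
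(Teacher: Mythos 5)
Your proof is correct, and it is a genuine variant of the paper's argument rather than a reproduction of it. The paper first splits $u^s(v)=u_1^s(v)+u_2^s(v)$, controls the boundary part $u_1^s(v)$ through the representation formula \eqref{4.3lem6} and the well-posedness of the interior Dirichlet problem, and then runs a duality argument for $u_2^s(v)$ on the bounded domain $\Omega$: the dual problem is the homogeneous-Dirichlet Lam\'e problem with right-hand side $\overline{u_2^s(v)}$, and the needed gradient integrability $\Vert\nabla\mathbf{u}\Vert_{L^{q'}(\Omega)}\leq C\Vert u_2^s(v)\Vert_{L^2(\Omega)}$ for some $q'>2$ is imported as a global (up to $\partial\Omega$) $W^{1,q'}$ estimate from \cite{R-C-G}. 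You avoid the splitting altogether by testing $u^s(v)$ against an arbitrary $g\in L^2(\Omega)$ and taking the dual solution to be the radiating field on all of $\mathbb{R}^3$; the extra fact you need --- that the Betti boundary form of two Kupradze-radiating fields over $\partial B_R$ vanishes as $R\to\infty$, i.e. the non-conjugated analogue of \eqref{Farfield-Nearfield} --- is standard (it is the reciprocity mechanism) and you flag it correctly. The regularity input also changes in a useful way: since $D\subset\subset\Omega$ you only need $\Vert\nabla\phi_g\Vert_{L^{q'}(D)}$, so an \emph{interior} Meyers--Gehring estimate suffices, and your sketch of it (coercivity of $\lambda|\nabla\cdot\phi|^2+2\mu|\epsilon(\phi)|^2$ under $\mu>0$, $2\mu+3\lambda>0$, Korn, Caccioppoli, Gehring) is the right one. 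This buys you two things the paper's route does not: no boundary regularity of $\Omega$ is invoked, and the implicit assumption that $\kappa^2$ is not a Dirichlet eigenvalue of the variable-coefficient Lam\'e operator on $\Omega$ (needed for the paper's dual problem \eqref{Mann}) disappears, since the radiating source problem is always uniquely solvable. The cost is that the interior higher-integrability theorem is asserted rather than fully proved --- but that is exactly symmetric to the paper's citation of \cite{R-C-G}, and in both arguments $q_0$ is the conjugate exponent of the resulting integrability threshold. In short: same duality-plus-$L^p$-regularity skeleton, different and arguably cleaner implementation.
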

\begin{proof}
We recall that $u^s(v)$ satisfies
\begin{equation}  \label{Lame_imPenetrab}
\begin{cases}
\nabla\cdot\sigma(u^s(v)) + \kappa^2 u^s(v) = -\nabla\cdot\sigma_D(v), \; \mbox{ in } \Omega  \\
u^s(v) = u^s(v)|_{\partial\Omega}, \ \text{on} \ \partial\Omega.
\end{cases}
\end{equation}
We write it as $u^s(v)=u_{1}^{s}(v)+u_{2}^{s}(v)$ where $u_{1}^{s}(v)$ satisfies
\begin{equation}  \label{lili1}
\begin{cases}
\nabla\cdot\sigma(u_{1}^{s}(v)) + \kappa^2 u_{1}^{s}(v) = 0, \; \mbox{ in } \Omega  \\
u_{1}^{s}(v) = u^s(v)|_{\partial\Omega}, \ \text{on} \ \partial\Omega
\end{cases}
\end{equation}
and $u_{2}^{s}(v)$ satisfies
\begin{equation}  \label{lili2}
\begin{cases}
\nabla\cdot\sigma(u_{2}^{s}(v)) + \kappa^2 u_{2}^{s}(v) = -\nabla\cdot\sigma_D(v), \; \mbox{ in } \Omega  \\
u_{2}^{s}(v) = 0, \ \text{on} \ \partial\Omega.
\end{cases}
\end{equation}
\textbf{Step 1:}\\
There exists a positive constant $C$, such that for every $q>1,$ we have
\begin{equation}\label{lali}
\|u_{1}^{s}(v)\|_{L^2(\Omega)} \leq C \|\nabla v\|_{L^q(D)}.
\end{equation}
Indeed, from \eqref{4.3lem6}, we deduce that
$
 \|u^s(v)\|_{H^{\frac{1}{2}}(\partial\Omega)} \leq C \|\nabla v\|_{L^q(D)},
$
since $D \subset\subset\Omega$.
The estimate \eqref{lali} comes now from the well posedness of \eqref{lili1}.\\
\textbf{Step 2:}\\
There exists $1\leq q_0<2$ such that for $q_0<q\leq 2,$ we have 
\[
 \|u_{2}^{s}(v)\|_{L^2(\Omega)} \leq C \|\nabla v\|_{L^q(D)}.
\]
Indeed, let $\textbf{u} \in H_{0}^{1}(\Omega)$ be the distribution solution satisfying the elastic model
\[
 \nabla\cdot(\sigma(\textbf{u})) + \kappa^2\textbf{u} = \overline{u_{2}^{s}(v)} \ \ \text{in} \ \ \Omega.
\]
Multiplying both sides of the last equation by $u_{2}^{s}(v)$ and integrating by parts, we obtain
\begin{equation}\label{Lp_es1}
 \begin{split}
\Vert{u_{2}^{s}(v)}\Vert_{L^2(\Omega)}^{2} 
& = -\int_{\Omega} tr(\sigma(\textbf{u})\nabla u_{2}^{s}(v)) dx + \kappa^2\int_{\Omega} \textbf{u}\cdot u_{2}^{s}(v)dx \\   
& = -\int_{\Omega} tr(\sigma(u_{2}^{s}(v))\nabla \textbf{u})dx + \kappa^2 \int_{\Omega} \textbf{u}\cdot u_{2}^{s}(v)dx.
\end{split}
\end{equation}
Similarly, multiplying both sides of the equation
$
 \nabla\cdot\sigma(u_{2}^{s}(v)) + \kappa^2 u_{2}^{s}(v) = -\nabla\cdot\sigma_{D}(v)
$
by $\textbf{u}$ and integrating by parts, we have
\begin{equation}\label{Lp_es2}
 -\int_{\Omega} tr(\sigma(u_{2}^{s}(v))\nabla\textbf{u})dx + \kappa^2\int_{\Omega}u_{2}^{s}(v)\cdot\textbf{u}dx = \int_{\Omega} tr(\sigma_D(v)\nabla\textbf{u})dx.
\end{equation}
From \eqref{Lp_es1} and \eqref{Lp_es2} we get
\[
 \Vert{u_{2}^{s}(v)}\Vert_{L^2(\Omega)}^{2} = \int_{\Omega} tr(\sigma_{D}(v)\nabla\textbf{u})dx.
\]
Then applying H{\"o}lder's inequality, for any $1\leq q' <\infty$, we obtain
\begin{equation}\label{Lp_es3}
 \Vert {u_{2}^{s}(v)}\Vert_{L^2(\Omega)}^{2} \leq \Vert\sigma_D(v)\Vert_{L^q(D)}\Vert\nabla\textbf{u}\Vert_{L^{q'}(\Omega)},
\end{equation}
where $\frac{1}{q} + \frac{1}{q'} = 1.$
By definition of $\textbf{u}$ we have
\begin{equation}\label{Mann}
 \begin{cases}
 & \nabla\cdot(\sigma(\textbf{u})) = \overline{u^s(v)} - \kappa^2 \textbf{u} \ \ \text{in} \ \ \Omega, \\
 &  \textbf{u} = 0 \ \ \text{on} \ \ \partial\Omega. 
\end{cases}
\end{equation}
In \cite{R-C-G}, the problem \eqref{Mann} is studied in the $L^p(\Omega)$ spaces and it is proved that $\exists q'_{1}>2$ and $C>0$ such that
$\forall 2\leq q' < q'_{1},$ we have
\[
 \|\textbf{u}\|_{W^{1,q'}(\Omega)} \leq C \|u_{2}^{s}(v)-\kappa^2\textbf{u}\|_{W^{-1,q'}(\Omega)}.
\]
Since $L^2(\Omega) \hookrightarrow W^{-1,q'}(\Omega)$ for $2\leq q'\leq 6,$ 
then we deduce that
\begin{equation}\label{Lp_es4}
 \Vert\nabla\textbf{u}\Vert_{L^{q'}(\Omega)} \leq C \{\Vert\textbf{u}\Vert_{L^2(\Omega)} + \Vert u_{2}^{s}(v)\Vert_{L^2(\Omega)}\}
\end{equation}
for some $C > 0.$
From the well posedness in $H^1(\Omega),$ we have the $L^2-$estimate
\begin{equation}\label{Lp_es5}
 \Vert\textbf{u}\Vert_{L^2(\Omega)} \leq C \Vert u_{2}^{s}(v)\Vert_{L^2(\Omega)}.
\end{equation}
Therefore from \eqref{Lp_es4} and \eqref{Lp_es5} we have
\begin{equation}\label{Lp_es6}
 \Vert\nabla\textbf{u}\Vert_{L^{q'}(\Omega)} \leq C \Vert u_{2}^{s}(v)\Vert_{L^2(\Omega)}
\end{equation}
 for some $C > 0.$ Combining \eqref{Lp_es3} and \eqref{Lp_es6}, we obtain
$
 \Vert u_{2}^{s}(v)\Vert_{L^2(\Omega)} \leq C \Vert\nabla v\Vert_{L^q(D)}
$
for some $C = C(\kappa,\lambda,\mu)$, for $q_0<q\leq2$, where $\frac{1}{q_0}+\frac{1}{q'_0}=1$ with $q'_0=\min\{q'_1,6\}.$ 
\end{proof}\\
Applying the Korn's inequality, the 1st term of the right hand side of the inequality \eqref{inss} can be lower bounded by
the norms CGO solutions only. In $(p,p)$ case, the 2nd term behaves like the CGO solution 
of the Helmholtz equation and for $(s,s)$ case it will be zero. Therefore, using Lemma \ref{4.3lem} and Lemma \ref{Lpest}, then from the inequality \eqref{inss} 
we deduce that
\begin{equation}\label{gen_pp}
   -I(v,v) 
 \geq (C_1-\mathcal{F})\Vert \nabla v\Vert_{L^2(D)}^{2} + c_1\Vert v\Vert_{L^2(D)}^{2} - c_2\|V\|_{L^2(D)}^{2} - c_3\Vert\nabla v\Vert_{L^q(D)}^{2}
\end{equation}
where $q_0<q<2$ and $v$ is considered to be $u_p$, $v=u_p=\nabla V$ with $V$ as the CGOs satisfying $(\Delta + \kappa_{p}^{2})V =0$ and 
\begin{equation}\label{gen_ss}
   -I(v,v) 
 \geq (C_1-\mathcal{F})\Vert \nabla v\Vert_{L^2(D)}^{2} + c_1\Vert v\Vert_{L^2(D)}^{2} - c_3\Vert\nabla v\Vert_{L^q(D)}^{2}
\end{equation}
where $v$ is considered to be $u_s$, $v=u_s:=\curl W,$ $W$ as the CGOs satisfying $(\Delta+\kappa_{s}^{2})W=0$. \\
 For the mixed case the indicator function can be written of the form
\[
 \begin{split}
 & I(u_s,u_p) = - I(u_s,u_s) + I(u_s,u), \\ 
 & I(u_p,u_s) = - I(u_s,u_s) + I(u,u_s),
\end{split}
\]
where CGO $u = u_p+u_s.$ Using the Cauchy's $\epsilon$-inequality, we have
\[
 |I(u_s,u)| \leq C\left(\frac{1}{\epsilon}\|\nabla u\|_{L^2(D)}^{2} + \epsilon\|\nabla u_s\|_{L^2(D)}^{2} \right),
\]
then from Lemma \ref{ss_1} and Lemma \ref{4.3lem}, we obtain for $q_0<q<2,$
\begin{equation}\label{sp_linear}
 \begin{split}
&  I(u_s,u_p), I(u_p,u_s) \\
& \geq (C - \mathcal{F} - {\tilde C}{\epsilon})\|\nabla u_s\|_{L^2(D)}^{2} + (C - \mathcal{F} - c_2)\|u_s\|_{L^2(D)}^{2} - c_1\Vert\nabla u_s\Vert_{L^q(D)}^{2} - \frac{\tilde{C}}{\epsilon} \|\nabla u\|_{L^2(D)}^{2}.
 \end{split}
\end{equation}
\subsubsection{Proof of Theorem \ref{theorem1}}
Assume $v$ to be the $p$-part of the CGOs with linear and logarithmic phases.
  From \eqref{gen_pp} together with Lemma \ref{ne2}
 we obtain
\[
\begin{split}
   \frac{-\tau^{-1}I(v,v)}{\Vert v\Vert_{L^2(D)}^{2}} 
& \geq \tau^{-1}\left[(C_1-\mathcal{F})\frac{\|\nabla v\|_{L^2(D)}^{2}}{\Vert v\Vert_{L^2(D)}^{2}} + c_1 - c_2\frac{\|V\|_{L^2(D)}^{2}}{\Vert v\Vert_{L^2(D)}^{2}} - \kappa^2\frac{\Vert\nabla v\Vert_{L^q(\Omega)}^{2}}{\Vert v\Vert_{L^2(D)}^{2}}\right].  \\
& \geq \tau^{-1}\left[(C_1-\mathcal{F})c\tau^2 + c_1 -c_2\tau^{-2} - c_3\tau^{3-\frac{2}{q}}\right].\\
\end{split}
\]
Therefore
$
 -\tau^{-1}I(v,v) \geq (C_1-\mathcal{F})c, \ \ \tau\gg1, \ \ c>0.
$
Remark that $B$ is taken to be arbitrary (but containing $\Omega$). Hence,
we choose it such that $\partial B$ is very close to $\Omega$. Due
to the smoothness of $\Phi(x, z)$ for $x$ away from $z$, we can
choose $B$ such that $C_1-\mathcal{F}>c_0>0$. Hence
$
 \vert\tau^{-1}I(v,v)\vert\geq C >0, \ \ \tau\gg1.
$ 
Similarly, considering $v$ to be the $s$-part of the CGOs with linear and logarithmic phases, then Theorem \ref{theorem1} follows from \eqref{gen_ss} and Lemma \ref{ne2}.
\subsubsection{Proof of Theorem \ref{theorem2}}
Recall that, $u_p$ and $u_s$ are the $p$-part and $s$-part of the CGOs for both the linear and logarithmic phases constructed in Proposition \ref{CGO_lin}.
Also, the total field $u$ is $u =u_p+u_s$.
From Lemma \ref{end_lem} and the estimate \eqref{sp_linear}, we have
\[
 \begin{split}
  \frac{I(u_s,u_p)}{\|\nabla u_s\|_{L^2(D)}^{2}}
& \geq (C- \mathcal{F} -{\tilde C}{\epsilon}) + (C -\mathcal{F} - C_2)\frac{\|u_s\|_{L^2(D)}^{2}}{\|\nabla u_s\|_{L^2(D)}^{2}} - C_1\frac{\|\nabla u^s\|_{L^q(D)}^{2}}{\|\nabla u_s\|_{L^2(D)}^{2}} -\frac{\tilde C}{\epsilon} \frac{\|\nabla u\|_{L^2(D)}^{2}}{\|\nabla u_s\|_{L^2(D)}^{2}} \\
&  \geq (C- \mathcal{F} -{\tilde C}{\epsilon}) - c_1\tau^{-2} - c_2\tau^{(1-\frac{2}{q})} - \frac{\tilde c_3}{\epsilon} \tau^{-2}. 
 \end{split}
\]
We can choose $\epsilon > 0$ and $\mathcal{F}$ such that $(C- \mathcal{F} -{\tilde C}{\epsilon}) > c_0 > 0,$ where $c_0$ is another constant. Therefore
$
 \frac{I(u_s,u_p)}{\|\nabla u_s\|_{L^2(D)}^{2}} \geq c_0, \ \text{for} \ \tau\gg1
$
and together with $\|\nabla u_s\|_{L^2(D)}^{2} \geq O(\tau^3),$ we obtain our required estimate
$
 |\tau^{-3}I(u_s,u_p)|>c_0>0, \ \ \tau\gg1.
$
\begin{remark}\label{rem}
 In the case of a penetrable obstacle $D$, the interface $\partial D$ is characterized by the condition $2\mu_D+3\lambda_D\geq 0$
and $\mu_D>0$ (or $2\mu_D+3\lambda_D\leq 0$ and $\mu_D<0$). As we said in the introduction, these conditions are only needed in the vicinity of 
$\partial D,$ since the CGOs and their derivatives are exponentially decaying locally in the interior of $D,$ or its image by the local transformations \eqref{change_coordi} or \eqref{change_coordi_log}, see sections 6 and 7. 
 If $\mu_D=0$ in $D$ or near $\partial D,$ then we cannot conclude by using $s$-parts of the farfield patterns corresponding to $s$ incident waves to reconstruct $D.$
The reason is that the corresponding indicator function is lower bounded by $c\int_D|\nabla \cdot v|^2$ + lot, see \eqref{mou},
where `lot' stands for lower order terms. Since $v$ is $s$ incident type then $\nabla\cdot v=0$. Similarly, we cannot conclude by using $p$-parts of the farfield patterns corresponding to $p$ incident waves for 
the reconstruction as $\nabla\cdot v=-\kappa_{p}^{2}V$ and hence its $L^2(D)$ norm is absorbed by the term 'lot'. The same conclusion applies for the mixed cases. Hence the particular case $\mu_D=0$ and $\lambda_D\neq 0$
near $\partial D$ is not covered by our results. However, this case can be covered if we use the full farfield pattern instead of its $p$ or $s$ parts.
\end{remark}
\subsection{The impenetrable obstacle case}
\subsubsection{Key inequalities}
It is shown in Section 2 that the indicator functions have the general form:
\begin{equation}\label{General form}
 I(v, w):=\int_{\partial D}[u^s(w)\cdot (\overline{\sigma(v)}\cdot \nu)-\overline{v}\cdot(\sigma(u^s(w))\cdot \nu)]ds(x)
\end{equation}
 where $u^s(w)$ is the scattered field associated to the incident field $w$.
Using integration by parts and the boundary conditions, we can write
\[
\begin{split}
 &\int_{\partial D}u^s(w)\cdot (\overline{\sigma(v)}\cdot \nu)ds(x) \\
 & = -\int_{\partial D}u^s(w)\cdot (\overline{\sigma(u^s(v))}\cdot \nu)ds(x) \\
 &=-\int_{\partial \Omega}u^s(w)\cdot (\overline{\sigma(u^s(v))}\cdot \nu)ds(x)+\int_{\Omega \setminus{\overline{D}}} \overline{\sigma(u^s(v))}\cdot \nabla u^s(w) dx -
\kappa^2 \int_{\Omega \setminus\overline{D}} u^s(w)\cdot \overline{u^s(v)}dx \\
\end{split}
\]
and
$$
\int_{\partial D}\overline{v}\cdot(\sigma(u^s(w))\cdot \nu)ds(x)=-\int_{\partial D}\overline{v}\cdot(\sigma(w)\cdot \nu)ds(x)=
-\int_D \sigma(\overline{v})\cdot \nabla w dx +\kappa^2 \int_D w\cdot \overline{v}dx.
$$

Hence \eqref{General form} becomes:
\begin{equation}\label{Identity}
\begin{split}
 I(v, w) =
&-\int_{\partial \Omega}u^s(w)\cdot (\overline{\sigma(u^s(v))}\cdot \nu)ds(x)+\int_{\Omega \setminus\overline{D}} \overline{\sigma(u^s(v))}\cdot \nabla u^s(w) dx 
 -\kappa^2 \int_{\Omega \setminus\overline{D}} u^s(w)\cdot \overline{u^s(v)}dx \\
& +\int_D \overline{\sigma(v)}\cdot \nabla w dx -\kappa^2 \int_D w\cdot \overline{v}dx.\\
\end{split}
\end{equation}
\textbf{The inequalities for $I_{s, s}$ and $I_{p,p}$}:
In this case, we take $v=w$. Hence, we have
$$
I(v,v)\geq -\int_{\partial \Omega}u^s(v)\cdot (\overline{\sigma(u^s(v))}\cdot \nu)ds(x)+\int_D \overline{\sigma(v)}\cdot \nabla v dx-
\kappa^2 \int_{\Omega \setminus\overline{D}} \vert u^s(v)\vert^2dx -\kappa^2 \int_D \vert v\vert^2dx.
$$
By the ellipticity condition of the elasticity tensor and the Korn inequality, we have
$$
\int_D \overline{\sigma(v)}\cdot \nabla v dx =
\int_D \overline{\varrho(\epsilon(v))}\epsilon(v)dx\geq c_1 \int_D \overline{\epsilon(v)}\cdot \epsilon(v)dx\geq \frac{c_1}{C_K}\Vert \nabla v\Vert^2_{L^2(D)}-
c_1\Vert v\Vert^2_{L^2(D)}
$$
where $C_K$ is the Korn constant and $\varrho$ is the elasticity tensor.
Hence
\begin{equation}\label{Omega-v}
I(v,v)\geq -\int_{\partial \Omega}u^s(v)\cdot (\overline{\sigma(u^s(v))}\cdot \nu)ds(x)+c_2\Vert \nabla v\Vert^2_{L^2(D)}-
\kappa^2 \int_{\Omega \setminus\overline{D}} \vert u^s(v)\vert^2dx -(\kappa^2+c_3) \int_D \vert v\vert^2dx.
\end{equation}
\textbf{The inequalities for $I_{s, p}$ and $I_{p,s}$}:
In this case, we take $v\neq w$. We use then the form:
$
I(v, w)= -I(v,v) + I(v,U)
$
where $U := v+w.$ 
As in the penetrable case, using the well posedness of the forward scattering problem and the trace theorem, we show that
$
 |I(v,U)| \leq C\left(\frac{1}{\epsilon}\|\nabla U\|_{L^2(D)}^{2} + \epsilon\|\nabla v\|_{L^2(D)}^{2}\right) \ \text{for} \ 0<\epsilon\ll1.
$
Combining this estimate with \eqref{Omega-v}, we obtain
\begin{equation}\label{imps}
 -I(v,w) \geq I(v,v) - C\left(\frac{1}{\epsilon}\|\nabla U\|_{L^2(D)}^{2} + \epsilon\|\nabla v\|_{L^2(D)}^{2}\right).
\end{equation}
In the next subsection, we derive the lower bound for the indicator function $I(v,v)$ in terms of the CGO solutions only.
\subsubsection{Estimating the dominant terms}
\textbf{Step 1} \\
In this step, we prove that $ \int_{\partial \Omega}u^s(v)\cdot (\overline{\sigma(u^s(v))}\cdot \nu)ds(x)$ is dominated by $\Vert v\Vert_{H^1(D)}^2$.
To do that, we represent $u^s$ as a single layer potential
\begin{equation}\label{integral-representation}
u^s:=\int_{\partial D}\Phi(x, z)f(z)ds(z) := (Sf)(x),
\end{equation} where $f$
satisfies
\begin{equation}\label{integral-equation}
(-\frac{1}{2}I +K^*)f=\sigma_n(u^s)=-\sigma_n(v)
\end{equation}
and $K^*$ is the adjoint of the double layer potential $K$ for the
Lam\'e system.

The following lemma shows that the integral equation \eqref{integral-equation} is solvable in the
spaces $H^{-s}(\partial D)$, for $s \in [-1,1]$ if we assume that
$\kappa^2$ is not an eigenvalue of Dirichlet-Lam\'e operator stated
on $D$.

\begin{lemma}\label{esti_f}
Assume that $\kappa^2$ is not a Dirichlet eigenvalue of the Lam{\'e} operator in $D$.
Then the operator
\[
-\frac{1}{2} I + K^* \ : \ H^{-t}(\partial D) \longrightarrow
H^{-t}(\partial D), \; 0\leq t \leq 1,
\]
is invertible.
\end{lemma}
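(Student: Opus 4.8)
The plan is to invoke the Fredholm theory for boundary integral operators of the Lamé system on Lipschitz domains. First I would recall that the single and double layer potentials $S$ and $K$ for the Lamé operator $\nabla\cdot\sigma_0 + \kappa^2$ are well defined on $H^{-t}(\partial D)$ for $t\in[-1,1]$, with the jump relations $S : H^{-t}(\partial D)\to H^{1-t}(\partial D)$ bounded and the operator $-\tfrac12 I + K^*$ bounded on $H^{-t}(\partial D)$; these mapping properties are classical on Lipschitz boundaries (see the references already cited in the paper). The operator $K^*$ is not compact on a merely Lipschitz boundary, so the argument cannot be the naive ``identity plus compact'' one; instead I would first establish the claim on $L^2(\partial D)=H^0(\partial D)$ using the Lamé-adapted Rellich identities / Verchota-type estimates that give the invertibility of $-\tfrac12 I + K^*$ on $L^2$, and then bootstrap to the scale $H^{-t}$, $0\le t\le1$, by interpolation together with a duality argument relating $-\tfrac12 I + K^*$ on $H^{-t}$ to $-\tfrac12 I + K$ on $H^{t}$.

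The key steps, in order, would be: (i) reduce injectivity of $-\tfrac12 I + K^*$ on $H^{-t}(\partial D)$ to a uniqueness statement: if $(-\tfrac12 I + K^*)f = 0$ then the single layer potential $u := Sf$ solves the interior Lamé equation with vanishing traction on $\partial D$; (ii) use the assumption that $\kappa^2$ is not a Dirichlet eigenvalue of the Lamé operator on $D$ together with the mixed interior/exterior argument — $u$ has zero traction from inside, hence (by a Green's/Betti identity and the ellipticity plus Korn inequality in $D$, exactly as used to derive \eqref{Omega-v}) $u$ must be a rigid motion in $D$, but since $\kappa^2>0$ the only such solution is $u\equiv 0$ in $D$; then the continuity of the single layer potential across $\partial D$ forces $u$ to solve the exterior problem with zero Dirichlet data and the Kupradze radiation condition, so $u\equiv 0$ outside as well, and the jump of the conormal derivative gives $f=0$; (iii) for surjectivity, combine injectivity with the Fredholm index-zero property of $-\tfrac12 I + K^*$ on $L^2(\partial D)$ (which on Lipschitz domains follows from the Rellich-identity estimates that make $\pm\tfrac12 I + K^*$ semi-Fredholm with index zero), then transport the index-zero property along the scale $H^{-t}$, $0\le t\le 1$, by interpolation with the endpoint $H^{-1}$, whose treatment is dual to the $H^{1}$ mapping property of $-\tfrac12 I + K$.

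The main obstacle is step (iii) on the Lipschitz boundary: establishing that $-\tfrac12 I + K^*$ is Fredholm of index zero on the whole Sobolev scale $H^{-t}(\partial D)$, $t\in[0,1]$, without the compactness one would have in the smooth case. I expect to handle this by citing the Lamé analogue of the Dahlberg–Kenig–Verchota layer potential theory (and the corresponding results for the Helmholtz/elasticity system already referenced, e.g. \cite{CK}, \cite{A-K}), which provides the required $L^2$ boundedness and invertibility of $-\tfrac12 I + K^*$ modulo finite-dimensional kernel, together with interpolation between $L^2(\partial D)$ and $H^{-1}(\partial D)$; injectivity from step (ii) then upgrades ``index zero'' to ``invertible''. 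The remaining points — the mapping properties and the Betti identity computation in step (ii) — are routine given what is already set up in Section 5.2.
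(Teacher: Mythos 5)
Your overall architecture (Fredholm of index zero plus injectivity, then interpolation/duality across the scale $H^{-t}$, $0\le t\le 1$) matches the paper's, but your injectivity argument contains a genuine error. You start from ``$u:=Sf$ has zero traction from inside'' and claim that Betti's identity plus Korn's inequality force $u$ to be a rigid motion. That conclusion is only valid for $\kappa=0$: for $\kappa^2>0$ the identity reads $\int_D\sigma(u)\cdot(\nabla\bar u)^T\,dx=\kappa^2\int_D|u|^2\,dx$, both sides are nonnegative, and nontrivial solutions exist exactly when $\kappa^2$ is a traction (Neumann) eigenvalue of $D$ --- which is not excluded by the hypothesis. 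Moreover, with the paper's convention \eqref{integral-equation}, $(-\tfrac12 I+K^*)f=\sigma_n(Sf)$ is the \emph{exterior} traction of the radiating field $Sf$, so the correct uniqueness chain runs in the opposite order: $(-\tfrac12 I+K^*)f=0$ gives a radiating exterior solution with zero traction, hence $Sf\equiv 0$ outside $D$ by uniqueness of the exterior traction problem; continuity of the single layer across $\partial D$ then makes $Sf$ an interior solution with zero Dirichlet data, and it is \emph{here} that the hypothesis ``$\kappa^2$ is not a Dirichlet eigenvalue'' is used to conclude $Sf\equiv 0$ in $D$, whence the traction jump yields $f=0$. In your version the Dirichlet-eigenvalue hypothesis is never actually invoked, which is the symptom of the misplaced boundary condition.

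On the Fredholm side your route also has a gap the paper deliberately avoids. You propose to get index zero for $-\tfrac12 I+K^*$ at frequency $\kappa$ directly from Rellich/Verchota-type estimates, but the Lipschitz-domain layer-potential theory you would cite (Dahlberg--Kenig, Mendez--Mitrea, Mayboroda--Mitrea) is stated for the \emph{static} operator $K_0^*$. The paper instead estimates the kernel of $K^*-K_0^*$ (showing $\nabla(\Phi_\kappa-\Phi_0)$ has only a logarithmic singularity), concludes that $K^*-K_0^*$ is smoothing and hence compact on $H^{-t}(\partial D)$, imports the known invertibility of $-\tfrac12 I+K_0^*$ on $L^2$ and $H^1$ and extends it to the scale by interpolation and duality, and only then writes $-\tfrac12 I+K^*=(-\tfrac12 I+K_0^*)+(K^*-K_0^*)$ as invertible plus compact. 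This comparison also supplies the regularity bootstrap you skip: from $(-\tfrac12 I+K^*)f=0$ one gets $(-\tfrac12 I+K_0^*)f=-(K^*-K_0^*)f\in L^2(\partial D)$, hence $f\in L^2(\partial D)$, so the classical $L^2$ jump relations apply in the uniqueness argument. You should adopt this comparison-with-the-static-operator step rather than asserting positive-frequency Rellich estimates.
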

Now, we use the estimate
$$
\vert \int_{\partial \Omega}u^s(v)\cdot (\overline{\sigma(u^s(v))}\cdot \nu) ds(x)
\vert   \leq \Vert u^s\Vert_{H^{\frac{1}{2}}(\partial \Omega)} \Vert
\sigma_n(u^s)\cdot \nu\Vert_{H^{-\frac{1}{2}}(\partial \Omega)}\leq
C \Vert u^s\Vert^2_{H^{1}(B\setminus\overline{\Omega})}, \ \text{$C>0,$}\
$$
where $B$ is a measurable set containing $\Omega$.
Using the representation \eqref{integral-representation}, we obtain
$$
\Vert u^s\Vert_{H^{1}(B\setminus\overline{\Omega})}\leq \Vert f
\Vert_{H^{-t}(\partial D)}\Vert
F\Vert_{H^{1}(B\setminus\overline{\Omega})}
$$
for $0\leq t\leq 1,$
where $F(x):=\Vert \Phi(x, \cdot)\Vert_{H^{t}(\partial
D)}$. Remark that $\|F\|_{H^1(B\setminus\overline{\Omega})}$ makes sense since $\partial D$ is away from $B\setminus\overline{\Omega}.$
From \eqref{integral-equation} and Proposition \ref{Trace-Theo}, stated at the end of this section, we deduce that
\begin{equation}\label{payel}
\Vert f
\Vert_{H^{-t}(\partial D)} \leq \Vert
\sigma_n(v)\Vert_{H^{-t}(\partial D)}\leq C\Vert
v\Vert_{H^{-t+\frac{3}{2}}(D)}, \ \text{for}\ \frac{1}{2}\leq t <1.
\end{equation}
Therefore, we have:
\begin{equation}\label{us--v}
\vert \int_{\partial \Omega}u^s(v)\cdot (\overline{\sigma(u^s(v))}\cdot \nu) ds(x)
\vert   \leq C \Vert v\Vert^2_{H^{-t+\frac{3}{2}}(D)},
\end{equation}
for $\frac{1}{2}\leq t <1$.\\
\textbf{Step 2} \\
We prove that $\Vert u^s(v)\Vert_{L^2(\Omega\setminus\overline{D})}$ is dominated by $\Vert v\Vert_{H^1(D)}$.
The single layer operator 
$
 S : H^{-t}(\partial D) \rightarrow H_{loc}^{-t+\frac{3}{2}}(\mathbb{R}^3),
$ $\ 0\leq t\leq 1,$
defined in \eqref{integral-representation}, is bounded, see \cite{Mclean}. 
Hence we obtain
\begin{equation}\label{bd_mc}
 \|u^s(v)\|_{L^2(\Omega\setminus\overline{D})} \leq C \|f\|_{H^{-t}(\partial D)}, \ 0\leq t\leq 1.
\end{equation}
From \eqref{payel}, for $\frac{1}{2}\leq t <1,$ we have
\begin{equation}\label{lt}
\Vert u^s\Vert^2_{L^2(\Omega \setminus\overline{D})}\leq \Vert v\Vert^2_{H^{-t+\frac{3}{2}}(D)}. 
\end{equation}
We choose $\frac{1}{2}<t<1$, then by interpolation and using the Cauchy's $\epsilon$-inequality, we obtain:
\begin{equation}\label{us--espilon}
 \Vert v\Vert^2_{H^{-t+\frac{3}{2}}(D)}\leq \epsilon \Vert \nabla v \Vert^2_{L^2(D)}+\frac{C}{\epsilon}\Vert v \Vert^2_{L^2(D)}  
\end{equation}
with some $C>0$ fixed and every $\epsilon>0$.
Finally, combining (\ref{Omega-v}), (\ref{us--v}), \eqref{lt} and (\ref{us--espilon}), we deduce that
\begin{equation}\label{main-estimate}
 I(v, v) \geq c\Vert \nabla v \Vert^2_{L^2(D)} -C\Vert v \Vert^2_{L^2(D)}, \; \tau >>1
\end{equation}
with positive constants $c$ and $C$ independent on $v, w$ and $\tau$ \footnote{We can also replace in the right hand side of (\ref{main-estimate}) $v$ by $w$.}.
Also for two different types of CGOs $v, w$, combining the estimates \eqref{imps} and \eqref{main-estimate}, we obtain
\[
 -I(v,w) \geq (c-\epsilon)\|\nabla v\|_{L^2(D)}^{2} - C\|v\|_{L^2(D)}^{2} - \frac{c_1}{\epsilon}\|\nabla U\|_{L^2(D)}^{2},
\]
recalling that $U = v+w$ is the total field, where $c,C,c_1$ are positive constants and $\epsilon>0$ is chosen very small so that $c-\epsilon$ will be greater than some positive real number.
Therefore, the inequalities for the indicator functions are as follows:
\begin{eqnarray}
  I(u_p,u_p) &\geq& c\Vert \nabla v_p \Vert^2_{L^2(D)} -C\Vert v_p \Vert^2_{L^2(D)},\\
  -I(u_p,u_s) &\geq& (c-\epsilon)\|\nabla v_p\|_{L^2(D)}^{2} - C\|v_p\|_{L^2(D)}^{2} - \frac{c_1}{\epsilon}\|\nabla v\|_{L^2(D)}^{2},\\
 I(u_s,u_s) &\geq& c\Vert \nabla v_s \Vert^2_{L^2(D)} -C\Vert v_s \Vert^2_{L^2(D)},\\
 -I(u_s,u_p) &\geq& (c-\epsilon)\|\nabla v_s\|_{L^2(D)}^{2} - C\|v_s\|_{L^2(D)}^{2} - \frac{c_1}{\epsilon}\|\nabla v\|_{L^2(D)}^{2},
\end{eqnarray}
where $v$ is the CGO solution (with linear or logarithmic phases) for elasticity and $v_p,v_s$ are its $p$ and $s$-parts respectively.
As in the penetrable obstacle case, using the behavior of the CGOs in terms of $\tau,$ we justify Theorem \ref{theorem1} and Theorem \ref{theorem2} 
for the impenetrable obstacle case.
\begin{proof}{of \textbf{Lemma \ref{esti_f}}}
The free space fundamental solutions of the Lam\'e system in $\mathbb{R}^3$
is given by
$
\Phi_{\kappa}(x, y):=\frac{\kappa^2_s}{4\pi \kappa^2}\frac{e^{i\kappa_s \vert
x-y\vert}}{\vert x-y \vert}I+ \frac{1}{4\pi \kappa^2}\nabla_x \nabla_x^T%
\big[ \frac{e^{i\kappa_s \vert x-y\vert}}{\vert x-y \vert}-\frac{%
e^{i\kappa_p \vert x-y\vert}}{\vert x-y \vert}\big]
$
 for a positive frequency $\kappa$ and 
$
\Phi_{0}(x, y):=-\frac{A}{4\pi}\frac{1}{\vert x-y \vert}I-\frac{B}{4\pi}\nabla_x \nabla_x^T%
\big[ \frac{1}{\vert x-y \vert}\big]
$
where $A:=\frac{1}{2}(\frac{1}{\mu}+\frac{1}{\lambda+2\mu})$, $B:=\frac{1}{2}(\frac{1}{\mu}-\frac{1}{\lambda+2\mu})$ 
and $I$ is the identity matrix.
These functions satisfy the equation $[\mu \Delta  +(\lambda +\mu)\nabla \div +\kappa^2] (\Phi_{\kappa}-\Phi_{0})=-\kappa^2 \Phi_{0}, \; \mbox{ in }
\mathbb{R}^3$. Let $Q$ be a large enough domain containing $D$. Multiplying this last equation by $\Phi_{\kappa}$ and 
integrating by parts in $Q$, we obtain
$
(\Phi_{\kappa}-\Phi_{0})(x, y)=-\kappa^2\int_{\Omega}\Phi_{\kappa}(x, z)\Phi_{0}(y, z)dz + BT(\partial \Omega)
$
where the boundary term is
$
BT(\partial \Omega):=\int_{\partial \Omega}(\sigma_n(\Phi_{\kappa}-\Phi_{0})(x, z)\Phi_{\kappa}(y, z)-
\sigma_n(\Phi_{\kappa})(x, z)(\Phi_{\kappa}-\Phi_{0})(y, z))ds(z).
$
From this representation we deduce that
$$
\vert \nabla(\Phi_{\kappa}-\Phi_{0})(x, y)\vert \leq C \int_{\Omega}\vert x-z\vert^{-2}\vert y-z \vert^{-1}dz +C, \; 
\mbox{ for } x, z \mbox{ in or near } D 
$$
where $C$ is a positive constant depending on the distance of $D$ to $\partial Q$. This means that:
$
\vert \nabla(\Phi_{\kappa}-\Phi_{0})(x, y)\vert \leq C \ln(\vert x-z\vert),\; \mbox{ for } x, z \mbox{ in or near } D. 
$
With this estimate at hand, we deduce that $K^*-K^*_0$ is a smoothing operator hence it is a compact operator from $H^{-t}(\partial D)$ to itself. 
Here we denoted by $K^*_0$ the adjoint of the double layer potential for the zero frequency $\kappa=0$.

In \cite{Go-Ke}, it is shown that the operator $-\frac{1}{2}I+K_0: L^2(\partial D)
\rightarrow L^2(\partial D)$ is invertible. In Theorem 13 of \cite{Men-Mit}, see also \cite{Ma-Mit}, it is shown that this operator is also 
invertible from $ H^1(\partial D)$ to $H^1(\partial D)$. Now, arguing as in \cite{Mi}, 
by interpolation and duality, we conclude that $-\frac{1}{2}I+K^*_0: H^{-t}(\partial D)
\rightarrow H^{-t}(\partial D)$ is invertible, for $t \in [0, 1]$.


We write $-\frac{1}{2}I+K^*=(-\frac{1}{2}I+K^*_0)+(K^*- K^*_0)$. 
From the above analysis, this operator is Fredholm of index zero.
Let us now show that it is injective. Let $f\in
H^{-t}(\partial D)$ such that $(-\frac{1}{2}I+K^*)f=0$. This implies
that $(-\frac{1}{2}I+K_0^*)f=-(K^*- K^*_0)f$. Since $K^*- K^*_0$
goes from $H^{-t}$ to $H^{-t+1}$ and $H^{-t+1} \subset L^2(\partial
D)$ for $t\in [0, 1]$, then $(K^*- K^*_0)f \in L^2(\partial D)$. In
addition, $-\frac{1}{2}I+K_0^*$ is invertible from $L^2(\partial D)$
to $L^2(\partial D)$, then we deduce that $f\in L^2(\partial D)$.
Now, by the standard argument, using jumps of the adjoint of the
double layer potential with $L^2(\partial D)$ densities, and the
assumption on $\kappa^2$, we deduce that $f=0$.
\end{proof}

We need the following trace property of the CGO solutions $v$ in the Sobolev spaces of negative orders.
\begin{proposition}\label{Trace-Theo} Let $D$ be a Lipschitz domain and $v$ be a solution of the Helmholtz equation in a set containing D. Then,
there exists a positive constant $C$ independent of $v$ such that
$$
\|\sigma_n(v)\|_{H^{-t}(\partial D)}\leq C
\|v\|_{H^{-t+\frac{3}{2}}(D)}, \; \frac{1}{2} \leq t < 1.
$$
\end{proposition}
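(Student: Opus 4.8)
The plan is to define and estimate the elastic traction $\sigma_n(v)=\sigma_0(v)\cdot\nu$ \emph{weakly}, via Betti's identity \eqref{betti_identity}, so that $v$ enters the computation only through its first derivatives paired against a smooth extension of the boundary datum; the full derivative we gain on the right-hand side comes precisely from compensating the half-derivative lost by the trace with the half-derivative gained when $\sigma_0(v)$ is tested against that extension. Write $s:=\tfrac32-t$, so the claim becomes $\|\sigma_n(v)\|_{H^{-t}(\partial D)}\le C\|v\|_{H^{s}(D)}$ with $\tfrac12<s\le1$. By interior elliptic regularity $v$ is smooth in a neighbourhood of $\overline D$ (this is the case for the CGO solutions to which the statement is applied; otherwise one argues by density), so all the integrations by parts below are classical and only the uniformity of $C$ is at issue. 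Finally, recall that the CGO solutions here (their $p$- and $s$-parts, and sums of these) solve the Lam\'e system $\mu_0\Delta v+(\lambda_0+\mu_0)\nabla\div v+\kappa^2v=0$, i.e.\ $\nabla\cdot\sigma_0(v)=-\kappa^2v$.

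First, for a fixed $\phi\in H^{t}(\partial D)$ I would pick a controlled extension. Since $\tfrac12<t+\tfrac12<\tfrac32$ — this is where the range $\tfrac12\le t<1$ enters — the trace operator $\gamma_0:H^{t+\frac12}(D)\to H^{t}(\partial D)$ on the Lipschitz domain $D$ has a bounded right inverse, so there is $\Phi\in H^{t+\frac12}(D)$ with $\gamma_0\Phi=\phi$ and $\|\Phi\|_{H^{t+\frac12}(D)}\le C\|\phi\|_{H^{t}(\partial D)}$. Applying \eqref{betti_identity} on $D$ to $v$ and $\Phi$ and using $\gamma_0\Phi=\phi$ gives
\[
 \langle \sigma_n(v),\phi\rangle_{\partial D}=\int_D \sigma_0(v)\cdot(\nabla\Phi)^T\,dx+\int_D(\nabla\cdot\sigma_0(v))\cdot\Phi\,dx .
\]
Because $\sigma_0(\cdot)$ is a first-order operator, $\sigma_0(v)\in H^{s-1}(D)$ with $\|\sigma_0(v)\|_{H^{s-1}(D)}\le C\|v\|_{H^{s}(D)}$, while $\nabla\cdot\sigma_0(v)=-\kappa^2v\in H^{s}(D)\hookrightarrow H^{s-1}(D)$ with norm $\le\kappa^2\|v\|_{H^{s}(D)}$; note $s-1=\tfrac12-t\in(-\tfrac12,0]$.

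The last step is to close the estimate by duality, and this is the point that needs care: one must keep all fractional-order pairings legitimate. Since $|s-1|<\tfrac12$, the space $H^{s-1}(D)$ coincides with $\widetilde H^{\,s-1}(D)$ and its dual is $H^{1-s}(D)=H^{t-\frac12}(D)$, so the two $L^2$-pairings above are the canonical dualities, bounded by the corresponding norms; moreover $\nabla\Phi$ and $\Phi$ lie in $H^{t-\frac12}(D)$ with norms $\le\|\Phi\|_{H^{t+\frac12}(D)}$. Hence
\[
 |\langle \sigma_n(v),\phi\rangle_{\partial D}|\le \|\sigma_0(v)\|_{H^{s-1}(D)}\|\nabla\Phi\|_{H^{t-\frac12}(D)}+\|\nabla\cdot\sigma_0(v)\|_{H^{s-1}(D)}\|\Phi\|_{H^{t-\frac12}(D)}\le C\|v\|_{H^{s}(D)}\|\phi\|_{H^{t}(\partial D)},
\]
and taking the supremum over $\phi\in H^{t}(\partial D)$, together with $s=\tfrac32-t$, yields the assertion. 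The only non-elementary inputs are the interior regularity of $v$ and the bounded right inverse of the trace operator on a Lipschitz domain in the range $\tfrac12<\sigma<\tfrac32$; everything else is Sobolev-index bookkeeping, and I expect the main obstacle to be exactly that bookkeeping — choosing the test space $H^{t+\frac12}(D)$ so that $2-s=t+\tfrac12$ and checking that the indices $\pm(t-\tfrac12)$ stay inside $(-\tfrac12,\tfrac12)$, which is what forces $t\in[\tfrac12,1)$.
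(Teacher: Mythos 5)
Your argument is correct, and it is essentially the proof the paper has in mind: the paper itself gives no details for Proposition \ref{Trace-Theo}, deferring to the acoustic case in the cited work of Sini--Yoshida, and the standard argument there is exactly your weak definition of the conormal trace by duality: pick $\phi\in H^{t}(\partial D)$, lift it by the bounded right inverse of the trace map $H^{t+\frac12}(D)\to H^{t}(\partial D)$ (valid on a Lipschitz domain since $\tfrac12<t+\tfrac12<\tfrac32$), apply Betti's identity \eqref{betti_identity}, and pair $\sigma_0(v)\in H^{s-1}(D)$ against $\nabla\Phi\in H^{1-s}(D)$ using that $H^{\sigma}(D)=\widetilde H^{\sigma}(D)$ for $|\sigma|<\tfrac12$. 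The one genuine adaptation to the elastic setting is your use of the Lam\'e equation to write $\nabla\cdot\sigma_0(v)=-\kappa^2 v$, which is the right move: every field to which the paper applies the proposition (the CGO, or its $p$- and $s$-parts, as in \eqref{payel}) solves the homogeneous Lam\'e system, whereas for a vector field solving only a componentwise Helmholtz equation the term $(\lambda_0+\mu_0)\nabla\operatorname{div}v$ would not reduce to zeroth order and your second interior integral would need extra care; so your reading of the loosely worded hypothesis is the correct one for the application. Two harmless remarks: the duality constant degenerates as $t\to1^{-}$, which is fine since only uniformity in $v$ is claimed, and your index bookkeeping in fact works for all $0<t<1$, so the restriction $t\ge\tfrac12$ is imposed by the use made of the estimate (interpolation in \eqref{us--espilon}), not by this proof.
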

\begin{proof} The proof goes in exactly the same way as in the Helmholtz case, see \cite{S-Y}. 
So, we omit to repeat those arguments.
\end{proof}
\begin{section}{\textbf{Estimating the CGOs}}
For our analysis of the inverse problem we needed to estimate the remainder terms of the CGOs.  In the case of linear phase, $\varphi$ is defined as $\varphi := -x\cdot\rho$ and $\psi := -x\cdot\rho^{\bot}$,
 where $\rho, \rho^{\bot}\in \mathbb{S}^2 :=\{x\in \mathbb{R}^3 ; \vert x\vert=1\}$ and $\rho\cdot\rho^{\bot} = 0.$ 
Following the usual notations, see \cite{Ikeha, N-Y, S-Y}, 
we define $K := \partial D \cap\{x\in\mathbb{R}^3 ; x\cdot\rho = h_D(\rho)\}.$ For any $\alpha\in K$, define $B(\alpha,\delta) :=\{x\in\mathbb{R}^3 ; |x-\alpha|<\delta\} \ (\delta>0).$
Then, $K\subset \bigcup_{\alpha\in K}B(\alpha,\delta).$ Since $K$ is compact, there exist $\alpha_1, \cdots , \alpha_N \in K$ such that
$K \subset B(\alpha_1,\delta)\cup \cdots B(\alpha_N, \delta)$. Then
we define
$
 D_{j, \delta}:=D\cap B(\alpha_j, \delta), D_{\delta}:=\bigcup_{j=1}^N D_{j,\delta}.
$
Note that
$
\int_{D\setminus D_{\delta}} e^{-p\tau (h_D(\rho) - x\cdot \rho)}dx = O(e^{-pc\tau}) \ (\tau \to \infty).
$
Let $\alpha_j \in K$. By a rotation and a translation, we may assume that $\alpha_j=0$ and the vector $\alpha_j -
x_0=-x_0$ is parallel to $e_3=(0,0,1)$. Then, we consider a change of
coordinates near $\alpha_j $:
\begin{equation}\label{change_coordi}
 y' = x' , \ y_3 = h_{D}(\rho) - x\cdot \rho,
\end{equation}
where $x'=(x_1,x_2),y'=(y_1,y_2),x=(x',x_3), y=(y', y_3)$. We denote the parametrization of $\partial D,$ after transforming $D$ by \eqref{change_coordi},
 near $\alpha_j$ by $l_j(y')$. Note that we keep the same notation $D$ after the transformation.
\begin{lemma}\label{Remainder_term}
(Analysis of the remainder term for the linear phase.) \\
For $1\leq q \leq 2,$ we have the following estimates for $\tau\gg1$
\begin{enumerate}
 \item 
$
 \int_D e^{-q\tau(h_D(\rho)-x\cdot\rho)}\vert r\vert^qdx  \leq C
\begin{cases} 
  \tau^{-2q-\frac{2-q}{2}}o(1) - \tau^{-2q-\frac{2-q}{2}} e^{-q\tau\delta}, \text{for $r$ in \eqref{keti}.}\ \\
  \tau^{-3q-\frac{2-q}{2}}o(1) - \tau^{-3q-\frac{2-q}{2}} e^{-q\tau\delta}, \text{for $r$ in \eqref{monro} or \eqref{meri}.}\
\end{cases}
$
\item
$
 \int_D e^{-q\tau(h_D(\rho)-x\cdot\rho)}\vert\nabla r\vert^qdx \leq C
\begin{cases} 
 \tau^{-q-\frac{2-q}{2}}o(1) - \tau^{-q-\frac{2-q}{2}} e^{-q\tau\delta}, \text{for $r$ in \eqref{keti}.}\ \\
 \tau^{-2q-\frac{2-q}{2}}o(1) - \tau^{-2q-\frac{2-q}{2}} e^{-q\tau\delta}, \text{for $r$ in \eqref{monro} or \eqref{meri}.}\
\end{cases}
$
\item
$
 \int_D e^{-q\tau(h_D(\rho)-x\cdot\rho)}\vert\nabla\nabla r\vert^qdx \leq C
\begin{cases} 
 \tau^{-\frac{2-q}{2}}o(1) - \tau^{-\frac{2-q}{2}} e^{-q\tau\delta}, \text{for $r$ in \eqref{keti}.}\ \\
 \tau^{-q-\frac{2-q}{2}}o(1) - \tau^{-q-\frac{2-q}{2}} e^{-q\tau\delta}, \text{for $r$ in \eqref{monro} or \eqref{meri}.}\
\end{cases}
$
\item
$
 \int_D e^{-q\tau(h_D(\rho)-x\cdot\rho)}\vert\nabla(\Delta r)\vert^qdx \leq C \tau^{-\frac{2-q}{2}}o(1) - \tau^{-\frac{2-q}{2}} e^{-q\tau\delta}, \text{for $r$ in \eqref{monro} or \eqref{meri}.}
$
\end{enumerate}
The above estimates are also valid if we replace $r$ by $s$, recalling that $R :=\left
     (\begin{array}{c} 
               r \\ 
               s
     \end{array}
      \right)$ 
in \eqref{monro} or \eqref{meri}.
  
 \end{lemma}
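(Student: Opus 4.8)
The plan is to work in the local coordinates \eqref{change_coordi} near each point $\alpha_j\in K$, where $y_3=h_D(\rho)-x\cdot\rho$, so that the exponential weight becomes $e^{-q\tau y_3}$, and to combine this with the already-established $L^2(\Omega)$-bounds on the remainder term and its derivatives. Concretely, recall from Proposition \ref{ll} (and its proof via \cite{S-Y}) that for $r$ in \eqref{keti} one has $\|r\|_{L^2(\Omega)}\le C\tau^{-2}$, $\|\nabla r\|_{L^2(\Omega)}\le C\tau^{-1}$, $\|\nabla\nabla r\|_{L^2(\Omega)}\le C$; and from Proposition \ref{CGO_lin}, estimate \eqref{es_rem1}, that for $r$ (or $s$) in \eqref{monro} or \eqref{meri} one has $\|r\|_{L^2(\Omega)}\le C\tau^{-3}$, $\|\nabla r\|_{L^2(\Omega)}\le C\tau^{-2}$, $\|\nabla\nabla r\|_{L^2(\Omega)}\le C\tau^{-1}$, and in addition, from \eqref{korc}, the interior estimate $\|\nabla R\|_{H^2(\Omega_0)}\le C$ for any $\Omega_0\subset\subset\Omega$, which is what gives control of $\nabla(\Delta r)$ in $L^2$ on the relevant (interior) pieces. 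These are the only facts about the CGOs that enter.

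First I would split the integral as $\int_D = \int_{D\setminus D_\delta} + \int_{D_\delta}$. On $D\setminus D_\delta$ the weight satisfies $e^{-q\tau(h_D(\rho)-x\cdot\rho)}=O(e^{-q\tau\delta})$ uniformly, so that piece is bounded by $e^{-q\tau\delta}$ times the corresponding $L^q(D)$-norm of the (derivative of the) remainder, which is $O(1)$ by Hölder from the $L^2$-bounds above; this accounts for the $-\tau^{(\cdots)}e^{-q\tau\delta}$ terms (the sign/constant bookkeeping is cosmetic — the point is these terms are exponentially small). The genuinely substantive part is $\int_{D_\delta}$. There I would pass to the $y$-coordinates of \eqref{change_coordi}, whose Jacobian is bounded above and below since $\partial D$ is Lipschitz and $\rho$ is (close to) the outer normal direction at $\alpha_j$; the transformed domain sits in $\{y_3>0\}$ with $\partial D$ given by a Lipschitz graph $y_3=l_j(y')$, $l_j\ge 0$. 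Then I apply Hölder in the $y$-variables with exponents $2/q$ and $2/(2-q)$ to separate the exponential weight from the remainder term:
\[
\int_{D_\delta} e^{-q\tau y_3}|r|^q\,dy \le \Big(\int_{D_\delta} e^{-\frac{2\tau q}{2-q}\, y_3}\,dy\Big)^{\frac{2-q}{2}}\Big(\int_{D_\delta}|r|^2\,dy\Big)^{\frac{q}{2}}.
\]
The first factor is an explicit Gaussian-type integral over a Lipschitz region contained in $\{y_3>0\}$; integrating $y_3$ over $[0,\delta]$ (or $[l_j(y'),\delta]$, which only helps) and $y'$ over a bounded set gives $\big(\int_0^\delta e^{-\frac{2\tau q}{2-q}y_3}dy_3\big)^{(2-q)/2}\cdot C = C\,\tau^{-\frac{2-q}{2}}(1-e^{-\frac{2\tau q}{2-q}\delta})^{(2-q)/2}$, i.e. $C\tau^{-(2-q)/2}(1+o(1))$ up to the exponentially small correction, which after expanding the power produces exactly the $\tau^{-(2-q)/2}o(1)-\tau^{-(2-q)/2}e^{-q\tau\delta}$ shape stated. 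The second factor contributes $(\|r\|_{L^2(\Omega)}^2)^{q/2}$, i.e. $\tau^{-2q}$ for \eqref{keti} and $\tau^{-3q}$ for \eqref{monro}/\eqref{meri}, and analogously $\tau^{-q}$, $\tau^{0}$ (resp. $\tau^{-2q}$, $\tau^{-q}$) when $r$ is replaced by $\nabla r$, $\nabla\nabla r$; multiplying the two factors gives each line of items (1)–(3). Item (4) is identical, using that $\nabla(\Delta r)$ is controlled in $L^2$ near $K$ by the interior estimate \eqref{korc} (the points of $K$ lie in the interior $\Omega$, and $D_\delta$ can be taken compactly inside $\Omega$), which for \eqref{monro}/\eqref{meri} gives $\|\nabla(\Delta r)\|_{L^2}=O(1)$, hence the factor $\tau^{-(2-q)/2}$ only; there is no analogous item for \eqref{keti} because $\Delta r=-\kappa_l^2 r - (\text{lower order})$ already has better decay and is not needed.

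The main obstacle — really the only non-routine point — is justifying that $\nabla(\Delta r)$ (for the full-system CGOs) and $\nabla\nabla r$ are controlled in $L^2$ on the pieces $D_\delta$ in a way compatible with the change of coordinates \eqref{change_coordi}: the bound \eqref{korc} is an \emph{interior} estimate, so one must make sure $D_\delta$ stays at positive distance from $\partial\Omega$ (true, since $K\subset\partial D\subset\subset\Omega$ and $\delta$ is small) and that differentiating the transport/remainder equations \eqref{csw_es_remainder0}, resp. \eqref{es_remainder0}, and applying elliptic interior regularity as in Case 1 of the proof of Proposition \ref{CGO_lin} indeed yields the claimed order in $\tau$; the Lipschitz change of variables does not degrade this since it is smooth away from $\alpha_j$ and bi-Lipschitz with bounded Jacobian. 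For the log-phase CGOs \eqref{meri} one uses instead the coordinates \eqref{change_coordi_log} (with $d_D(x_0)$ in place of $h_D(\rho)$), but the argument — Hölder splitting into an explicit weight integral times an $L^2$-norm of the remainder, plus an exponentially small exterior piece — is verbatim the same, so I would state it once and remark that it transfers. The replacement of $r$ by $s$ is immediate because $s$ satisfies \eqref{es_rem1} with the same orders.
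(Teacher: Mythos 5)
Your proposal follows essentially the same route as the paper: split $\int_D$ into $\int_{D_\delta}$ plus an exponentially small exterior piece, apply H\"older with conjugate exponents $\tfrac{2}{q}$ and $\tfrac{2}{2-q}$ to separate the weight from the remainder, use the $L^2$-bounds $\|r\|_{L^2}\le C\tau^{-2}$ (resp.\ $\|\partial^\alpha R\|_{L^2}\le C\tau^{|\alpha|-3}$ and \eqref{korc} for $\nabla\Delta r$), and evaluate the weight integral in the coordinates \eqref{change_coordi}. The one imprecision is in the weight factor: by integrating $y_3$ over $[0,\delta]$ you obtain $C\tau^{-\frac{2-q}{2}}\cdot O(1)$, not the $\tau^{-\frac{2-q}{2}}o(1)$ asserted in the lemma; to get the $o(1)$ you must keep the lower limit $l_j(y')$, arrive at $\bigl[\tau^{-1}\sum_j\iint_{|y'|<\delta}e^{-pq\tau l_j(y')}dy'-\tau^{-1}e^{-pq\tau\delta}\bigr]^{1/p}$, and invoke dominated convergence to conclude $\sum_j\iint_{|y'|<\delta}e^{-pq\tau l_j(y')}dy'=o(1)$ as $\tau\to\infty$, which is exactly what the paper does. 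This refinement is harmless for the later applications but is needed to match the statement as written; otherwise your argument, including the separate (degenerate) treatment of $q=2$ and the use of the interior estimate for item (4), is the paper's proof.
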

\begin{proof}
\begin{enumerate}
 \item
For $1\leq q<2,$ we have
$
 \int_D e^{-q\tau(h_D(\rho)-x\cdot\rho)}\vert r\vert^qdx = \left(\int_{D_{\delta}} + \int_{D\setminus \overline{D_{\delta}}}\right)e^{-q\tau(h_D(\rho)-x\cdot\rho)}\vert r\vert^qdx.
$\\
Applying the H{\"o}lder inequality, we obtain
\[
\begin{split}
& \int_{D_{\delta}}e^{-q\tau(h_D(\rho)-x\cdot\rho)}\vert r\vert^qdx \\
& \leq \left(\int_{D_{\delta}}e^{-pq\tau(h_D(\rho)-x\cdot\rho)}\right)^{\frac{1}{p}} \left(\int_{D_{\delta}}\vert r\vert^{p'q}\right)^{\frac{1}{p'}}, \
 \text{where} \ \frac{1}{p} + \frac{1}{p'} = 1 \ \text{and consider} \ p'q = 2, \ \text{then}\\
& \leq C 
\begin{cases}
\tau^{-2q}\left[\sum_{j = 1}^{N}\iint_{|y'|<\delta}dy'\int_{l_j(y')}^{\delta}e^{-q\tau y_3}dy_3\right]^{\frac{1}{p}}, \text{since}\ \|r\|_{L^2(\Omega)}\leq C\tau^{-2}, \text{see}\ \eqref{es_r} \\
\tau^{-3q}\left[\sum_{j = 1}^{N}\iint_{|y'|<\delta}dy'\int_{l_j(y')}^{\delta}e^{-q\tau y_3}dy_3\right]^{\frac{1}{p}},  \text{since}\ \|r\|_{L^2(\Omega)}\leq C\tau^{-3}, \text{see}\ \eqref{try}
\end{cases}\\
& = C 
\begin{cases}
\tau^{-2q}\left[\tau^{-1}\sum_{j = 1}^{N}\iint_{|y'|<\delta} e^{-pq\tau l_j(y')} - \tau^{-1} e^{-pq\tau\delta} \right]^{\frac{1}{p}}, \\
\tau^{-3q}\left[\tau^{-1}\sum_{j = 1}^{N}\iint_{|y'|<\delta} e^{-pq\tau l_j(y')} - \tau^{-1} e^{-pq\tau\delta} \right]^{\frac{1}{p}}, 
\end{cases} \\
& \leq C
\begin{cases}
 \tau^{-2q-\frac{2-q}{2}}o(1) - \tau^{-2q-\frac{2-q}{2}} e^{-q\tau\delta},  \\
 \tau^{-3q-\frac{2-q}{2}}o(1) - \tau^{-3q-\frac{2-q}{2}} e^{-q\tau\delta}, 
\end{cases}
\end{split}
\]
since $\sum_{j = 1}^{N}\iint_{|y'|<\delta} e^{-pq\tau l_j(y')} = o(1) \ (\tau\rightarrow\infty)$ by the Lebesgue dominated convergence theorem.  \\
For $q=2,$ we have 
\[
\begin{split}
\int_{D_{\delta}}e^{-2\tau(h_D(\rho)-x\cdot\rho)}\vert r\vert^2dx 
& \leq C \|e^{-2\tau(h_D(\rho)-x\cdot\rho)}\|_{L^{\infty}(D_{\delta})}\|r\|_{L^2(D_{\delta})}^{2} \\
& \leq C
\begin{cases}
 \tau^{-4}, \ \text{since}\ \|r\|_{L^2(\Omega)}\leq C\tau^{-2}, \text{see}\ \eqref{es_r} \\
 \tau^{-6}, \ \text{since}\ \|r\|_{L^2(\Omega)}\leq C\tau^{-3}, \text{see}\ \eqref{try}
\end{cases} \\
\end{split}
\]
 \item
Similarly, from the estimates \eqref{winkl} and \eqref{try} with $r=1$, we obtain the required estimate.
\item
Here also, we need the estimates \eqref{winkl} and \eqref{try} with $r=2$.
\item
For this case, we use the property $ \|\nabla(\Delta r)\|_{L^2(D_{\delta})} \leq C$, see \eqref{korc}, to obtain the estimate.
\end{enumerate}
\end{proof}
In the case of logarithmic phase, i.e, $\varphi (x) = \log|x-x_0|$,
and introduce the change of coordinates corresponding to \eqref{change_coordi}:
\begin{equation}\label{change_coordi_log}
 y' = x' , \ y_3 =\log |x-x_0| - d_D(x_0),
\end{equation}
where $x'=(x_1,x_2),y'=(y_1,y_2),x=(x',x_3), y=(y', y_3)$ and 
the parametrization of $\partial D$, after transforming $D$ by \eqref{change_coordi_log},
 near $\alpha_j \in K$ is denoted by $L_j(y')$. Then, we have a similar analysis for the remainder term as in the case of linear phase.
\begin{lemma}\label{Remainder_term_log}
(Analysis of the remainder term for the logarithmic phase.) \\
For $1\leq q \leq 2,$ we have the following estimates for $\tau\gg1$
\begin{enumerate}
 \item 
$
 \int_D e^{-q\tau(\log|x-x_0|-d_D(x_0))}\vert r\vert^qdx \leq C 
\begin{cases} 
  \tau^{-2q-\frac{2-q}{2}}o(1) - \tau^{-2q-\frac{2-q}{2}} e^{-q\tau\delta}, \text{for $r$ in \eqref{keti}.}\ \\
  \tau^{-3q-\frac{2-q}{2}}o(1) - \tau^{-3q-\frac{2-q}{2}} e^{-q\tau\delta}, \text{for $r$ in \eqref{monro} or \eqref{meri}.}\
\end{cases}
$
\item
$
 \int_D e^{-q\tau(\log|x-x_0|-d_D(x_0))}\vert\nabla r\vert^qdx \leq C 
\begin{cases} 
 \tau^{-q-\frac{2-q}{2}}o(1) - \tau^{-q-\frac{2-q}{2}} e^{-q\tau\delta}, \text{for $r$ in \eqref{keti}.}\ \\
 \tau^{-2q-\frac{2-q}{2}}o(1) - \tau^{-2q-\frac{2-q}{2}} e^{-q\tau\delta}, \text{for $r$ in \eqref{monro} or \eqref{meri}.}\
\end{cases}
$
\item
$
 \int_D e^{-q\tau(\log|x-x_0|-d_D(x_0))}\vert\nabla\nabla r\vert^qdx \leq C 
\begin{cases} 
 \tau^{-\frac{2-q}{2}}o(1) - \tau^{-\frac{2-q}{2}} e^{-q\tau\delta}, \text{for $r$ in \eqref{keti}.}\ \\
 \tau^{-q-\frac{2-q}{2}}o(1) - \tau^{-q-\frac{2-q}{2}} e^{-q\tau\delta}, \text{for $r$ in \eqref{monro} or \eqref{meri}.}\
\end{cases}
$
\item
$
 \int_D e^{-q\tau(\log|x-x_0|-d_D(x_0))}\vert\nabla(\Delta r)\vert^qdx \leq C 
[\tau^{-\frac{2-q}{2}}o(1) - \tau^{-\frac{2-q}{2}} e^{-q\tau\delta}], \text{for $r$ in \eqref{monro} or \eqref{meri}.}\
$
\end{enumerate}
The above estimates are also valid if we replace $r$ by $s$, for $r$ in \eqref{monro} or \eqref{meri}.  
 \end{lemma}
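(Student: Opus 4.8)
The plan is to run the argument of the proof of Lemma~\ref{Remainder_term} almost verbatim, with the linear weight $h_D(\rho)-x\cdot\rho$ replaced throughout by the logarithmic one $\log|x-x_0|-d_D(x_0)$ and the local parametrizations $l_j$ of $\partial D$ replaced by $L_j$. That proof used only three structural properties of the phase, all of which persist here: (i) $\log|x-x_0|-d_D(x_0)\ge 0$ on $\overline D$, with equality exactly on $K=\partial D\cap\{\log|x-x_0|=d_D(x_0)\}$, so that $\int_{D\setminus D_\delta}e^{-p\tau(\log|x-x_0|-d_D(x_0))}dx=O(e^{-pc\tau})$ and the contribution of $D\setminus\overline{D_\delta}$ is negligible; (ii) near each $\alpha_j\in K$ the formula \eqref{change_coordi_log} is a genuine change of coordinates straightening the weight into $e^{-q\tau y_3}$, with Jacobian bounded above and below uniformly in $j$; and (iii) the remainder terms satisfy the global bounds \eqref{es_r}, \eqref{winkl} for $r$ as in \eqref{keti}, and \eqref{try}, \eqref{korc} for $R=(r,s)^\top$ as in \eqref{monro} or \eqref{meri}.

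Granting (ii), the computation is the same as in Lemma~\ref{Remainder_term}. For $1\le q<2$ one splits $D=D_\delta\cup(D\setminus\overline{D_\delta})$, applies H\"older's inequality on $D_\delta$ with conjugate exponents $p,p'$ chosen so that $p'q=2$, which yields the factor $\|\partial^\alpha r\|_{L^2(D_\delta)}^{q}$ (bounded by $C\tau^{-q(2-|\alpha|)}$ via \eqref{es_r}--\eqref{winkl}, respectively by $C\tau^{-q(3-|\alpha|)}$ via \eqref{try}, and by $C$ for the third-order term via \eqref{korc}), and the weight factor $\big(\int_{D_\delta}e^{-pq\tau(\log|x-x_0|-d_D(x_0))}dx\big)^{1/p}$; passing to the $(y',y_3)$ coordinates and integrating $y_3$ over $(L_j(y'),\delta)$ gives $\int_{L_j(y')}^\delta e^{-pq\tau y_3}dy_3=(pq\tau)^{-1}\big(e^{-pq\tau L_j(y')}-e^{-pq\tau\delta}\big)$, so raising to the power $1/p=(2-q)/2$ produces the stated $\tau^{-(2-q)/2}$, and $\sum_j\int\!\!\int_{|y'|<\delta}e^{-pq\tau L_j(y')}dy'=o(1)$ by dominated convergence (since $L_j\ge0$), which produces the factor $o(1)$ together with the $-\,\tau^{\cdots}e^{-q\tau\delta}$ term. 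The endpoint $q=2$ is treated directly by $\|e^{-2\tau(\log|x-x_0|-d_D(x_0))}\|_{L^\infty(D_\delta)}\,\|\partial^\alpha r\|_{L^2(D_\delta)}^2$. Since the same bounds \eqref{try}, \eqref{korc} hold componentwise for $s$, the estimates with $r$ replaced by $s$ follow identically.

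The only point genuinely new with respect to the linear case — and the one I expect to need the most care — is (ii): verifying that \eqref{change_coordi_log} is a legitimate local change of coordinates. After the rotation and translation placing $\alpha_j=0$ and $\alpha_j-x_0$ parallel to $e_3$, one has $\nabla_x\log|x-x_0|=(x-x_0)/|x-x_0|^2$, which at $\alpha_j$ is a nonzero multiple of $e_3$; hence the map $x\mapsto(x',\,\log|x-x_0|-d_D(x_0))$ has nonsingular Jacobian near $\alpha_j$, and, because $x_0\in\mathbb R^3\setminus\overline{ch(\Omega)}$ and $D\subset\subset\Omega$, the modulus of its third column is bounded above and below by positive constants uniformly on the finitely many patches $B(\alpha_j,\delta)$ for $\delta$ small. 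Once this is in place, together with the Lipschitz regularity of the resulting boundary graphs $L_j$, the rest of the proof is word-for-word that of Lemma~\ref{Remainder_term}.
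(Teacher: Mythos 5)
Your proposal is correct and follows essentially the same route as the paper, which simply invokes the proof of Lemma \ref{Remainder_term} with the weight $h_D(\rho)-x\cdot\rho$ replaced by $\log|x-x_0|-d_D(x_0)$ and $l_j$ by $L_j$, relying on the same estimates \eqref{es_r}, \eqref{winkl}, \eqref{try} and \eqref{korc}. Your additional verification that \eqref{change_coordi_log} is a nondegenerate change of coordinates (via $\nabla\log|x-x_0|\neq 0$ near $\partial D$ since $x_0\notin\overline{ch(\Omega)}$) is a point the paper leaves implicit, but it does not change the argument.
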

The proof is similar to the one of Lemma \ref{Remainder_term} since the estimates are due to the ones provided in proposition \ref{ll} 
and \ref{CGO_lin}, see \eqref{es_r}, \eqref{try} and \eqref{korc}.
\subsection{Estimating the CGOs for $I_{pp}$ and $I_{ss}$}
\ ~ \
\vspace{1mm}
\begin{lemma}\label{ne1}
 Let $v$ be the $p$-part or $s$-part of the CGOs of the linear phase introduced in Proposition \ref{ll}. For $1\leq q \leq 2$, we have the following estimates
for $\tau\gg1$:
\begin{enumerate}
 \item 
$
 \int_D|v|^2dx \geq C\tau\sum_{j=1}^N \iint_{|y'|<\delta }e^{-2\tau l_j(y')}dy' - \tau^{-2}o(1),
$
\item
$
 \int_D|v|^qdx \leq C\tau^{q-1}\sum_{j=1}^N \iint_{|y'|<\delta }e^{-q\tau l_j(y')}dy' + \tau^{-q-\frac{2-q}{2}}o(1),
$
\item
$
\int_D|\nabla v|^2dx \geq C\tau^3\sum_{j=1}^N \iint_{|y'|<\delta }e^{-2\tau l_j(y')}dy' - o(1), 
$
\item
$
 \int_D|\nabla v|^qdx \leq C\tau^{2q-1}\sum_{j=1}^N \iint_{|y'|<\delta }e^{-q\tau l_j(y')}dy' + \tau^{-\frac{2-q}{2}}o(1),
$
where $C$ is a positive constant.
\end{enumerate}
The above estimates are all valid also for logarithmic phase. For this case $l_j$ should be replaced by $L_j.$
\end{lemma}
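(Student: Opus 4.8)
The plan is to reduce all four estimates — for the $p$-type and the $s$-type CGOs, and for both the linear and the logarithmic phase — to three ingredients: an explicit description of the dominant part of $v$ and of $\nabla v$; the change of coordinates \eqref{change_coordi} (resp.\ \eqref{change_coordi_log}), which has constant Jacobian and makes $\varphi$ equal to the linear coordinate $y_3\ge 0$; and the remainder estimates already proved in Lemma \ref{Remainder_term} and Lemma \ref{Remainder_term_log}. First I would record the form of $v$. For the linear phase of Proposition \ref{ll} there is no remainder: with $U=e^{\tau(x\cdot\rho-t)+i\sqrt{\tau^2+\kappa_l^2}\,x\cdot\rho^\perp}$ one has $\nabla U=(\tau\rho+i\sqrt{\tau^2+\kappa_l^2}\rho^\perp)U$ and $\nabla\nabla U=(\tau\rho+i\sqrt{\tau^2+\kappa_l^2}\rho^\perp)\otimes(\tau\rho+i\sqrt{\tau^2+\kappa_l^2}\rho^\perp)U$, so, since $\rho\perp\rho^\perp$ are unit vectors, $|\nabla U|=\sqrt{2\tau^2+\kappa_l^2}\,|U|$, $|\nabla\nabla U|=(2\tau^2+\kappa_l^2)|U|$, and $|U|=e^{-\tau(h_D(\rho)-x\cdot\rho)}=e^{-\tau y_3}$ at $t=h_D(\rho)$; the $s$-type CGO is $\curl(cU)=\nabla U\times c$ for a fixed unit vector $c$ (e.g.\ $c=\rho\times\rho^\perp$), for which a one-line computation gives the same moduli. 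For the logarithmic phase I would expand $v=e^{-\tau\varphi-i\tau\psi}(\tau\beta_0+\beta_1+\tau^{-1}\beta_2+R_v)$ with $\varphi=\log|x-x_0|-d_D(x_0)$, dominant amplitude $\beta_0=-(\nabla\varphi+i\nabla\psi)a_0$ (resp.\ $-a_0\,(\nabla\varphi+i\nabla\psi)\times c$ in the $s$-type case), smooth $\beta_1,\beta_2$, and $R_v=-\tau(\nabla\varphi+i\nabla\psi)r+\nabla r$; the eikonal relations $|\nabla\varphi|^2=|\nabla\psi|^2$, $\nabla\varphi\cdot\nabla\psi=0$ give $|\beta_0|\ge c|\nabla\varphi|=c|x-x_0|^{-1}\ge c'>0$ on $\overline D$, using that $a_0$ is smooth and non-vanishing there (clear from its explicit form, since the line $\{x-x_0=\lambda\omega_0\}$ avoids $\partial\Omega$ and $\overline D\subset\subset\Omega$). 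Similarly $\nabla v=e^{-\tau\varphi-i\tau\psi}(\tau^2\gamma_0+O(\tau)+R_{\nabla v})$ with $|\gamma_0|\ge c''>0$ and $R_{\nabla v}$ a combination of $\tau^2(\cdots)r$, $\tau(\cdots)\nabla r$ and $(\cdots)\nabla\nabla r$.

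The core computation is the layer-cake estimate after the change of coordinates: $\int_D e^{-q\tau\varphi}f\,dx=\int e^{-q\tau y_3}\tilde f\,dy$, and since $D_{j,\delta}\subset\{|y'|<\delta,\ y_3>l_j(y')\}$ one gets $\int_{D_{j,\delta}}e^{-q\tau y_3}\,dy\le \frac{1}{q\tau}\iint_{|y'|<\delta}e^{-q\tau l_j(y')}\,dy'$ and the reverse inequality on a slightly smaller cylinder, while $\int_{D\setminus D_\delta}e^{-q\tau y_3}\,dy=O(e^{-c\tau})$. For the upper bounds in parts 2 and 4 I would expand $|v|^q$, resp.\ $|\nabla v|^q$, via $(a+b+c+d)^q\le C(a^q+b^q+c^q+d^q)$: the $\tau\beta_0$, resp.\ $\tau^2\gamma_0$, term produces the main term $C\tau^{q-1}\sum_j\iint e^{-q\tau l_j}$, resp.\ $C\tau^{2q-1}\sum_j\iint e^{-q\tau l_j}$; the smooth lower-order terms are smaller by a factor $\tau^{-q}$ and are absorbed; and the $R_v$, resp.\ $R_{\nabla v}$, term is estimated, using the bounds on $\int_D e^{-q\tau y_3}|r|^q$, $\int_D e^{-q\tau y_3}|\nabla r|^q$ and $\int_D e^{-q\tau y_3}|\nabla\nabla r|^q$ from Lemma \ref{Remainder_term}/\ref{Remainder_term_log}, by exactly $C\tau^{-q-\frac{2-q}{2}}o(1)$, resp.\ $C\tau^{-\frac{2-q}{2}}o(1)$ — the error terms in the statement. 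In the linear-phase case there is no $R_v$, so only the exponentially small tail enters, which is smaller than those $o(1)$-errors.

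For the lower bounds in parts 1 and 3 I would write $|v|^2=|X+Y|^2\ge|X|^2-2|X||Y|$ pointwise, with $X=e^{-\tau\varphi}\tau\beta_0$, resp.\ $X=e^{-\tau\varphi}\tau^2\gamma_0$, and $Y$ the rest; integrating, $\int_D|X|^2\ge c\tau^2\int_D e^{-2\tau y_3}\,dx\ge c'\tau\sum_j\iint e^{-2\tau l_j}-Ce^{-c\tau}$, resp.\ $\ge c\tau^3\sum_j\iint e^{-2\tau l_j}-Ce^{-c\tau}$, by the non-vanishing of $\beta_0$ (resp.\ $\gamma_0$) together with the layer-cake lower bound, while $\int_D|X||Y|$ is controlled by the Cauchy--Schwarz inequality and the same remainder estimates, and then absorbed — partly into $c'\tau\sum_j\iint e^{-2\tau l_j}$ by Young's inequality and partly into $\tau^{-2}o(1)$, resp.\ $o(1)$. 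This last absorption is the point I expect to be the main obstacle: because $\partial D$ is only Lipschitz, the main term $\tau\sum_j\iint e^{-q\tau l_j}$ may be as small as $O(\tau^{-1})$ (a conical point of $\partial D$ on $K$), so one must check that the remainder and cross-term contributions to $\int_D|v|^2$, resp.\ $\int_D|\nabla v|^2$, are of strictly smaller order. This is exactly why the refined estimates of Lemma \ref{Remainder_term}/\ref{Remainder_term_log}, which rely on the semiclassical estimate \eqref{es_r} of Proposition \ref{ll}, are needed: they push the remainder part of $\int_D|v|^2$ down to $\tau^{-2}o(1)$ and that of $\int_D|\nabla v|^2$ down to $o(1)$, both $o(\tau^{-1})$. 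A secondary, purely bookkeeping, matter is the passage between $D_{j,\delta}=D\cap B(\alpha_j,\delta)$ and the cylinders $\{|y'|<\delta,\ y_3>l_j(y')\}$ and the mild overlap of the $D_{j,\delta}$'s; this only affects the constant $C$ and is dealt with by shrinking $\delta$. The logarithmic-phase statements follow verbatim with $l_j$ replaced by $L_j$.
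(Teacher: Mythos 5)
Your proposal is correct and follows essentially the same route as the paper: isolate the dominant amplitude of $v$ and $\nabla v$, pass to the coordinates \eqref{change_coordi}/\eqref{change_coordi_log} and the layer-cake bounds for $\int e^{-q\tau y_3}$, and control the cross and remainder contributions through the weighted estimates of Lemmas \ref{Remainder_term} and \ref{Remainder_term_log}, which yields exactly the stated error orders. The only notable deviation is in the $s$-type logarithmic case, where you obtain the positivity of the leading amplitude, $|(\nabla\varphi+i\nabla\psi)\times c|^2=|\nabla\varphi\times c|^2+|\nabla\psi\times c|^2\geq|\nabla\varphi|^2|c|^2$, directly from the eikonal relations (a correct, slightly more elementary argument), whereas the paper instead chooses $(\beta_1,\beta_2,\beta_3)=\omega_0$ so that the line $\{x-x_0=\lambda\beta\}$ avoids $\overline{D}$; also note that the Jacobian of \eqref{change_coordi_log} is only bounded above and below rather than constant, which, as in the paper, is harmless.
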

\begin{proof}
We show the proof for the points (1.) and (2.). Similar arguments can be used to justify (3.) and (4.).
\begin{SCase}(Estimating the CGOs for $I_{pp}$ case.)
\end{SCase}
Recall that the CGOs of Proposition \ref{ll} are of the form
$
 V(x,\tau, t) = e^{-\tau(\varphi+i\psi)}(a_0 + \tau^{-1}a_1 + r),
$
where $a_0, a_1$ are smooth on $\overline{\Omega},$ $a_0 > 0$ and the remainder term $r\in H_{scl}^{2}(\Omega)$ has the estimate
$
  \Vert r\Vert_{H_{scl}^{2}(\Omega)} \leq C\tau^{-2} 
$
and then by interpolation
$
  \Vert r\Vert_{H^{s}(\Omega)} \leq C\tau^{-(2-s)}, \ 0 \leq s \leq 2. 
$
The $p$-part of the CGOs for elasticity is of the form
$
u_p = \nabla V
 =e^{-\tau(\varphi + i\psi)}[-\tau(\nabla\varphi + i\nabla\psi)(a_0 + \tau^{-1}a_1 + r) + \nabla(a_0 + \tau^{-1}a_1 + r)]
$
and then for $j=1,2,3$, we have
\[
\begin{split}
 \frac{\partial u_p}{\partial x_j} 
& = -\tau(\frac{\partial\varphi}{\partial x_j} + i\frac{\partial\psi}{\partial x_j})u_p 
 + e^{-\tau(\varphi + i\psi)}[-\tau(\frac{\partial}{\partial x_j}(\nabla\varphi + i\nabla\psi))(a_0 + \tau^{-1}a_1 + r) \\
& - \tau(\nabla\varphi + i\nabla\psi)(\frac{\partial}{\partial x_j}(a_0 + \tau^{-1}a_1 + r)) + \frac{\partial}{\partial x_j}(\nabla(a_0 + \tau^{-1}a_1 + r))]. \\
\end{split}
\]
\begin{enumerate}
 \item 
Applying Cauchy's $\epsilon$-inequality with $\epsilon = \frac{1}{2}$, we have
\begin{equation}\label{logpp}
 \begin{split}
  \int_D|u_p|^2
& \geq \frac{1}{2}\int_De^{-2\tau\varphi}\{\tau^2|\nabla\varphi+i\nabla\psi|^2|a_0|^2- \\
&|[(-\tau)(\nabla\varphi+i\nabla\psi)(\tau^{-1}a_1+r)+\nabla(a_0+\tau^{-1}a_1+r)]|^2\}\\
& \geq \int_D|a_0|^2\tau^2e^{-2\tau\varphi}-\int_De^{-2\tau\varphi}[\tau^2(\tau^{-2}|a_1|^2+|r|^2)+|\nabla a_0|^2+\tau^{-2}|\nabla a_1|^2+|\nabla r|^2]\\
& \geq \tau^2\left[c_1\int_De^{-2\tau\varphi}-c_2\int_De^{-2\tau\varphi}\{\tau^{-2}+|r|^2+\tau^{-4}+\tau^{-2}|\nabla r|^2\}\right].
 \end{split}
\end{equation}
\item[(i)]
$I_{pp}$ case using the logarithmic phase:\\
Replacing $\varphi(x)$ by $\log|x-x_0|-d_D(x_0)$ in \eqref{logpp} and using the Lemma \ref{Remainder_term_log}
 we have for $\tau\gg1,$
\begin{equation}\label{logpp1}
\begin{split}
 \int_D|u_p|^2 \geq
& \tau^2\left[\{c_1-c_2\tau^{-2}-c_3\tau^{-4}\}\int_De^{-2\tau(\log|x-x_0|-d_D(x_0))}dx\right] \\
& -c[\tau^{-2}o(1)-\tau^{-2}e^{-2\tau\delta}].
\end{split}
\end{equation}
  Therefore for $\tau\gg1$, combining the following estimate
\begin{equation}\label{cincro}
\begin{split}
&\int_De^{-2\tau(\log|x-x_0|-d_D(x_0))}dx \\
&\geq\int_{D_\delta}e^{-2\tau(\log|x-x_0|-d_D(x_0))}dx 
 \geq C \sum_{j=1}^N \iint_{|y'|<\delta }dy' \int_{L_j(y')}^{\delta } e^{-2\tau y_3} dy_3 \\
& \geq C\tau^{-1}\sum_{j=1}^N \iint_{|y'|<\delta }e^{-2\tau L_j(y')}dy' - \frac{C}{2}\tau^{-1}e^{-2\delta \tau}
\end{split}
\end{equation}
together with \eqref{logpp1}, we obtain
\begin{equation}\label{pota}
 \int_D|u_p|^2\geq
 C\tau\sum_{j=1}^N \iint_{|y'|<\delta }e^{-2\tau L_j(y')}dy'- \tau^{-2}o(1) + \text{exponentially decaying term}.
\end{equation}
Observe that in \eqref{pota} the first term is the dominating term as $\sum_{j=1}^N \iint_{|y'|<\delta }e^{-2\tau L_j(y')}dy' \geq C\tau^{-2},$
see \eqref{low_v}.
\item[(ii)]
$I_{pp}$ case using the linear phase:\\
In the linear case, the $p$-part of the CGO solution is explicitly given
\[
 u_p = (\tau\rho + i\sqrt{\tau^2+\kappa_{p}^{2}}\rho^{\perp})e^{\tau(x\cdot\rho - t)+ i\sqrt{\tau^2+\kappa_{p}^{2}}x\cdot\rho^{\perp}}.
\]
The estimate for the lower bound follows as in (i). In this case $a_0,a_1$ and the remainder terms are not appearing, so
the computations are easier. Then replacing $L_j$ by $l_j$
we obtain the required form.
\item
We have for $1\leq q \leq 2,$
\begin{equation}\label{lop1}
   \int_D|u_p|^q
 \leq c_1\int_De^{-q\tau\varphi}\left[\tau^q(1+\tau^{-q}+|r|^q)+1+\tau^{-q}+|\nabla r|^q\right].
\end{equation}
\item[(i)]
$I_{pp}$ case using the logarithmic phase:\\
For $\tau\gg1,$ replacing $\varphi(x)$ by $\log|x-x_0|-d_D(x_0)$ in \eqref{lop1} and using Lemma \ref{Remainder_term_log} 
 we have 
\begin{equation}\label{com}
\begin{split}
 \int_D|u_p|^q \leq 
& c_1\tau^q\left[\int_De^{-q\tau(\log|x-x_0|-d_D(x_0))}(1+\tau^{-q}+\tau^{-2q})\right]\\
& + c[\tau^{-q-\frac{2-q}{2}}o(1)-\tau^{-q-\frac{2-q}{2}}e^{-q\tau\delta}].
\end{split}
\end{equation}
Combining the estimate
\begin{equation}\label{lll1}
\begin{split}
\int_De^{-q\tau(\log|x-x_0|-d_D(x_0))}dx  
& =\left(\int_{D_{\delta} } + \int_{D \setminus D_{\delta }}\right)e^{-q\tau(\log|x-x_0|-d_D(x_0))}dx \\
& \leq C \sum_{j=1}^N \iint_{|y'|<\delta }dy' \int_{L_j(y')}^{\delta } e^{-q\tau y_3} dy_3 + O(e^{-qc\tau}) \\
&\leq C\tau^{-1}\sum_{j=1}^N \iint_{|y'|<\delta }e^{-q\tau L_j(y')}dy' -\frac{C}{q}\tau^{-1} e^{-q\delta \tau} + Ce^{-qc\tau}
\end{split}
\end{equation}
with \eqref{com}, for $\tau\gg1,$ we obtain 
\[
 \int_D|u_p|^q \leq
C\tau^{q-1}\sum_{j=1}^N \iint_{|y'|<\delta }e^{-q\tau L_j(y')}dy' + \tau^{-q-\frac{2-q}{2}}o(1) + \text{exponentially decaying term}.
\]
\item[(ii)]
$I_{pp}$ case using the linear phase:\\
The procedure is similar to the logarithmic case.
\end{enumerate}
\begin{SCase}(Estimating the CGOs for $I_{ss}$ case.)
  \end{SCase}
Let now $v$ be the $s$-part of the CGOs for elasticity. 
Recall that it is given by
$\curl U$ where $U$ is the CGO solution of the vector Helmholtz equation $\Delta + \kappa_{s}^{2}$ of the form
$
U(x,\tau, t) = e^{-\tau(\varphi+i\psi)}(a_0 + \tau^{-1}a_1 + r) (\beta_1, ~\beta_2, ~\beta_3)^\top,
$
 where $a_0, a_1, r$ depend on $\kappa_s$ and $a_0, a_1$ are smooth on $\overline{\Omega}$ and $r$ satisfies \eqref{winkl}. 
Therefore 
$
 u_s = \curl U = (X_{2,3},X_{3,1},X_{1,2}),
$
where
\[
\begin{split}
 X_{l,m} :=
& e^{-\tau(\varphi+i\psi)}\{(\beta_m\frac{\partial a_0}{\partial x_l} - \beta_l\frac{\partial a_0}{\partial x_m}) + \tau^{-1}(\beta_m\frac{\partial a_1}{\partial x_l} - \beta_l\frac{\partial a_1}{\partial x_m}) + (\beta_m\frac{\partial r}{\partial x_l} - \beta_l\frac{\partial r}{\partial x_m}) \\
& -\tau(a_0 + \tau^{-1}a_1 + r)((\beta_m\frac{\partial\varphi}{\partial x_l} - \beta_l\frac{\partial\varphi}{\partial x_m}) + i(\beta_m\frac{\partial\psi}{\partial x_l} - \beta_l\frac{\partial\psi}{\partial x_m}))\} 
\end{split}
\]
with $(l,m) = (2,3), (3,1)$ and $(1,2).$ 
Also the gradient term can be written as 
$
 \nabla u_s = (\nabla X_{2,3},\nabla X_{3,1},\nabla X_{1,2}),
$
where
\[
\begin{split}
 \frac{\partial X_{l,m}}{\partial x_j} = 
&-\tau (\frac{\partial\varphi}{\partial x_j} + i\frac{\partial\psi}{\partial x_j})X_{l,m}+ e^{-\tau(\varphi+i\psi)}\{(\beta_m\frac{\partial^2a_0}{\partial x_j\partial x_l} - \beta_l\frac{\partial^2a_0}{\partial x_j\partial x_m}) \\
&  + \tau^{-1}(\beta_m\frac{\partial^2a_1}{\partial x_j\partial x_l} - \beta_l\frac{\partial^2a_1}{\partial x_j\partial x_m}) 
+ (\beta_m\frac{\partial^2 r}{\partial x_j\partial x_l} - \beta_l\frac{\partial^2 r}{\partial x_j\partial x_m}) \\
& -\tau(\frac{\partial a_0}{\partial x_j} + \tau^{-1}\frac{\partial a_1}{\partial x_j} + \frac{\partial r}{\partial x_j})((\beta_m\frac{\partial\varphi}{\partial x_l} - \beta_l\frac{\partial\varphi}{\partial x_m}) + i(\beta_m\frac{\partial\psi}{\partial x_l} - \beta_l\frac{\partial\psi}{\partial x_m})) \\
& -\tau(a_0 + \tau^{-1}a_1 + r)((\beta_m\frac{\partial^2\varphi}{\partial x_j\partial x_l} - \beta_l\frac{\partial^2\varphi}{\partial x_j\partial x_m}) + i(\beta_m\frac{\partial^2\psi}{\partial x_j\partial x_l}-\beta_l\frac{\partial^2\psi}{\partial x_j\partial x_m}))
\}.
\end{split}
\]
\begin{enumerate}
 \item 
Arguing as in Case 1, for $\tau\gg1,$ using Cauchy's $\epsilon$-inequality and Lemma \ref{Remainder_term}, the dominant term for the lower bound of $\| v_s\|_{L^2(D)}^{2}$ has the following form
\[
 \int_D|u_s|^2 \geq
 C\tau^2\int_De^{-2\tau\varphi}\sum_{l,m}\left[(\beta_m\frac{\partial\varphi}{\partial x_l} - \beta_l\frac{\partial\varphi}{\partial x_m})^2 + (\beta_m\frac{\partial\psi}{\partial x_l} - \beta_l\frac{\partial\psi}{\partial x_m})^2\right] -\tau^{-2}o(1).
\]
\item[(i)]
$I_{ss}$ case using the logarithmic phase:\\
Using the explicit forms of $\varphi$ and $\psi$, we need the term
\begin{equation}\label{mili}
\sum_{l,m}\left[(\beta_m\frac{\partial\varphi}{\partial x_l} - \beta_l\frac{\partial\varphi}{\partial x_m})^2 + (\beta_m\frac{\partial\psi}{\partial x_l} - \beta_l\frac{\partial\psi}{\partial x_m})^2\right] 
\end{equation}
to be greater than some positive constant in $D$. If the term \eqref{mili} is equal to zero, then both the square terms will be zero. However
$(\beta_m\frac{\partial\varphi}{\partial x_l} - \beta_l\frac{\partial\varphi}{\partial x_m})^2(x) = 0$ implies $x$ lies on the line
$\{x\in\mathbb{R}^3; x-x_0 = \frac{x_1-x_{01}}{\beta_1}(\beta_1, \beta_2, \beta_3)\}$ which we write as
$\{x\in\mathbb{R}^3, x-x_0=\lambda(\beta_1,\beta_2,\beta_3), \ \lambda\in\mathbb{R}\}$. 
So, it is enough to choose $(\beta_1,\beta_2,\beta_3)$ such that $\{x\in\mathbb{R}^3, x-x_0=\lambda(\beta_1,\beta_2,\beta_3), \ \lambda\in\mathbb{R}\}\cap\overline{D} =\emptyset.$
A natural choice is $(\beta_1,\beta_2,\beta_3) :=\omega_0$ where $\omega_0$ is given in \eqref{mcon} in Proposition \ref{CGO_lin} since $\overline{D}\subset\Omega$ and
$\{x\in\mathbb{R}^3, x-x_0=\lambda\omega_0, \ \lambda\in\mathbb{R}\}\cap\partial\Omega=\emptyset.$ 
The remaining part of the proof is the same as in Case 1.
\item[(ii)]
$I_{ss}$ case using the linear phase:\\
In this case, the CGO has a simple form, see Proposition \ref{ll} and \eqref{mili} is equal to \\
 $\sum_{l,m}\left[(\rho_l-\rho_m)^2+(\rho_{l}^{\bot}-\rho_{m}^{\bot})^2\right]$, so it is positive in $D.$
\item
As in Case 1, the leading order term is $\tau^{2q}$ for $1\leq q \leq2$, so using the previous argument we have the result for both the linear and logarithmic phases.
\end{enumerate}
\end{proof}
\begin{lemma}\label{ne2}
Let $v$ be the $p$-part or $s$-part of the CGOs of both 
the linear and logarithmic phases introduced in Section \ref{cgo_PP}. Let also $V$ be the solution of $(\Delta + \kappa_{p}^{2})V=0$
introduced in Section \ref{cgo_PP}.
Then we have the following estimates 
\begin{enumerate}
 \item 
$
 \frac{\Vert\nabla v\Vert_{L^2(D)}^{2}}{\Vert v\Vert_{L^2(D)}^{2}} \geq C\tau^2, \ \ \tau\gg1
$
\item
$
 \frac{\Vert V\Vert_{L^2(D)}^{2}}{\Vert v\Vert_{L^2(D)}^{2}} \leq C\tau^{-2} \ \text{and} \ \Vert v\Vert_{L^2(D)}^{2} \geq C\tau^{-1}, \ \ \tau\gg1
$
\item For $q_0<q<2$, 
$
 \frac{\Vert\nabla v\Vert_{L^q(\Omega)}^{2}}{\Vert v\Vert_{L^2(D)}^{2}}\leq C\tau^{3-\frac{2}{q}}, \ \ \tau\gg1.
$
\end{enumerate}
\end{lemma}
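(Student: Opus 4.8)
The plan is to reduce all three estimates to the two‑sided bounds already established in Lemma \ref{ne1} and in Lemmas \ref{Remainder_term}--\ref{Remainder_term_log}, combined with the behaviour of the quantity $\Sigma(\tau):=\sum_{j=1}^{N}\iint_{|y'|<\delta}e^{-2\tau l_j(y')}\,dy'$ (with $l_j$ replaced by $L_j$ for the logarithmic phase). Two elementary facts about $\Sigma(\tau)$ will be used repeatedly: $\Sigma(\tau)\le C$ for all $\tau$, since each integrand is $\le 1$ on a set of finite measure, and $\Sigma(\tau)\ge c\tau^{-2}$ for $\tau\gg1$ — this last is exactly the lower bound already invoked in the proof of Lemma \ref{ne1}, and it is what forces every error term below to be of strictly lower order than the corresponding main term. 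Recall that $v$ is the $p$-part $u_p$ or the $s$-part $u_s$ of the CGO; when $v=u_p$ we write $v=\nabla V$ with $(\Delta+\kappa_p^2)V=0$, and the estimate involving $V$ in (2) is needed only in this case.

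First I would prove (1) together with the lower bound $\Vert v\Vert_{L^2(D)}^2\ge C\tau^{-1}$ in (2). Lemma \ref{ne1}(1) gives $\Vert v\Vert_{L^2(D)}^2\ge c\tau\Sigma(\tau)-o(\tau^{-2})$; since $\tau\Sigma(\tau)\ge c\tau^{-1}$ dominates $o(\tau^{-2})$, this yields $\Vert v\Vert_{L^2(D)}^2\ge c\tau\Sigma(\tau)\ge c\tau^{-1}$ for $\tau\gg1$. Lemma \ref{ne1}(2) with $q=2$ gives in the same way the matching upper bound $\Vert v\Vert_{L^2(D)}^2\le C\tau\Sigma(\tau)$, and Lemma \ref{ne1}(3) gives $\Vert\nabla v\Vert_{L^2(D)}^2\ge c\tau^3\Sigma(\tau)-o(1)\ge c\tau^3\Sigma(\tau)$, because $\tau^3\Sigma(\tau)\ge c\tau\to\infty$. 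Dividing the last inequality by the upper bound for $\Vert v\Vert_{L^2(D)}^2$ produces $\Vert\nabla v\Vert_{L^2(D)}^2/\Vert v\Vert_{L^2(D)}^2\ge c\tau^2$, which is (1).

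Next, for the remaining inequality in (2), $\Vert V\Vert_{L^2(D)}^2\le C\tau^{-2}\Vert v\Vert_{L^2(D)}^2$, I would use $v=\nabla V$ directly. In the linear case $V=e^{\tau(x\cdot\rho-t)+i\sqrt{\tau^2+\kappa_p^2}\,x\cdot\rho^\perp}$, so $\nabla V=(\tau\rho+i\sqrt{\tau^2+\kappa_p^2}\,\rho^\perp)V$ and $|\nabla V|^2=(2\tau^2+\kappa_p^2)|V|^2$ pointwise, giving the ratio $(2\tau^2+\kappa_p^2)^{-1}\le C\tau^{-2}$ at once. In the logarithmic case $V=e^{-\tau(\varphi+i\psi)}(a_0+\tau^{-1}a_1+r)$ with $\Vert r\Vert_{H_{scl}^{2}(\Omega)}\le C\tau^{-2}$ by Proposition \ref{ll}; bounding $|V|^2\le Ce^{-2\tau\varphi'}(1+|r|^2)$ for the normalized phase $\varphi'$, then using the elementary integral bound $\int_D e^{-2\tau\varphi'}\,dx\le C\tau^{-1}\Sigma(\tau)$ up to exponentially small terms together with Lemma \ref{Remainder_term_log}(1) with $q=2$ (which gives $\int_D e^{-2\tau\varphi'}|r|^2\,dx\le C\tau^{-4}$), I would get $\Vert V\Vert_{L^2(D)}^2\le C\tau^{-1}\Sigma(\tau)+C\tau^{-4}$; dividing by $\Vert v\Vert_{L^2(D)}^2\ge c\tau\Sigma(\tau)$ and invoking $\Sigma(\tau)\ge c\tau^{-2}$ yields $\Vert V\Vert_{L^2(D)}^2/\Vert v\Vert_{L^2(D)}^2\le C\tau^{-2}+C\tau^{-5}\Sigma(\tau)^{-1}\le C\tau^{-2}$.

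Finally, for (3), Lemma \ref{ne1}(4) gives $\Vert\nabla v\Vert_{L^q(D)}^{q}\le C\tau^{2q-1}\Sigma(\tau)+\tau^{-\frac{2-q}{2}}o(1)$; raising to the power $2/q$ and using $(a+b)^{2/q}\le C(a^{2/q}+b^{2/q})$ I would obtain $\Vert\nabla v\Vert_{L^q(D)}^{2}\le C\tau^{4-\frac{2}{q}}\Sigma(\tau)^{2/q}+\tau^{1-\frac{2}{q}}o(1)$, and dividing by $\Vert v\Vert_{L^2(D)}^2\ge c\tau\Sigma(\tau)$ leaves $C\tau^{3-\frac{2}{q}}\Sigma(\tau)^{\frac{2}{q}-1}+c^{-1}\Sigma(\tau)^{-1}\tau^{-\frac{2}{q}}o(1)$; since $\frac{2}{q}-1>0$ and $\Sigma(\tau)$ is bounded the first term is $\le C\tau^{3-2/q}$, and $\Sigma(\tau)\ge c\tau^{-2}$ makes the second $O(\tau^{2-2/q})o(1)=o(\tau^{3-2/q})$, so (3) follows for $L^q(D)$, and the stated $L^q(\Omega)$ estimate follows identically from the corresponding global upper bounds provided by Lemma \ref{ne1} and Lemma \ref{Remainder_term}. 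The only genuinely delicate point in the whole argument is this bookkeeping of the orders of the error terms $o(1)$, $o(\tau^{-2})$ and the exponentially small contributions against the dominant factors $\tau\Sigma(\tau)$ and $\tau^{3}\Sigma(\tau)$; the sharp lower bound $\Sigma(\tau)\gtrsim\tau^{-2}$ is precisely what settles it, after which each of the three claims reduces to a single division.
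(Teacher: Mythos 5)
Your treatment of parts (1) and (2) is correct and follows essentially the same route as the paper: divide the lower bound of Lemma \ref{ne1}(3) by the upper bound from Lemma \ref{ne1}(2) with $q=2$, and control all error terms through $\Sigma(\tau)\ge c\tau^{-2}$ (the bound \eqref{low_v}); your explicit computation for the $V$-estimate in (2) (pointwise $|\nabla V|^2=(2\tau^2+\kappa_p^2)|V|^2$ in the linear case, and $\Vert V\Vert_{L^2(D)}^2\le C\tau^{-1}\Sigma(\tau)+C\tau^{-4}$ in the logarithmic case) is a legitimate fleshing-out of what the paper dismisses as ``similar to (1)''.

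There is, however, a genuine gap in your part (3). You quote Lemma \ref{ne1}(4) as $\Vert\nabla v\Vert_{L^q(D)}^{q}\le C\tau^{2q-1}\Sigma(\tau)+\tau^{-\frac{2-q}{2}}o(1)$ with $\Sigma(\tau)=\sum_j\iint_{|y'|<\delta}e^{-2\tau l_j(y')}dy'$, but the lemma actually bounds $\Vert\nabla v\Vert_{L^q(D)}^{q}$ by $C\tau^{2q-1}\Sigma_q(\tau)+\dots$ with $\Sigma_q(\tau):=\sum_j\iint_{|y'|<\delta}e^{-q\tau l_j(y')}dy'$, and since $q\le 2$ one has $\Sigma_q(\tau)\ge\Sigma(\tau)$, so your version is not a consequence of the lemma. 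With the correct quantity your division yields $C\tau^{3-\frac{2}{q}}\,\Sigma_q(\tau)^{2/q}/\Sigma(\tau)$, and using only ``$\Sigma_q$ bounded'' and ``$\Sigma\ge c\tau^{-2}$'' (your two stated facts) gives the insufficient bound $C\tau^{5-\frac{2}{q}}$. The missing ingredient — and precisely the key step in the paper's proof of (3) — is H\"older's inequality on the bounded set $\{|y'|<\delta\}$, which gives $\Sigma_q(\tau)\le C\,\Sigma(\tau)^{q/2}$, hence $\Sigma_q(\tau)^{2/q}\le C\,\Sigma(\tau)$, after which your bookkeeping closes exactly as you wrote it. A secondary remark: your last sentence, asserting that the stated $L^q(\Omega)$ version follows ``identically from the corresponding global upper bounds'', is not supported — Lemma \ref{ne1} only provides bounds over $D$, and at $t=h_D(\rho)$ the CGO is exponentially large on the part of $\Omega$ beyond the supporting plane; the paper's own proof in fact only establishes the $L^q(D)$ estimate, which is what is used later.
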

\begin{proof}
Note that
 \begin{equation}\label{low_v}
\begin{split}
\sum_{j=1}^N \iint_{|y'|<\delta} e^{-2\tau l_j(y')} dy' 
&\geq C\sum_{j=1}^N \iint_{|y'|<\delta} e^{-2\tau |y'|} dy' \\
&\geq C\tau^{-2} \sum_{j=1}^N \iint_{|y'|<\tau \delta} e^{-2 |y'|} dy' 
=C\tau^{-2} \ \ (\tau \gg 1),
\end{split}
\end{equation}
since we have $l_j(y')\leq C|y'|$ if $\partial D$ is Lipschitz.
\begin{enumerate}
 \item 
Using Lemma \ref{ne1} and \eqref{low_v}, we obtain
\[
\begin{split}
\frac{\int_D |\nabla v|^2 dx}{\int_D |v|^2dx} &\geq
C\frac{\tau^3 \sum_{j=1}^N \iint_{|y'|<\delta} e^{-2\tau l_j(y')} dy'- o(1) }{\tau \sum_{j=1}^N \iint_{|y'|<\delta} e^{-2\tau l_j(y')} dy' + \tau^{-2}o(1)} \\
&=C\tau^2 \frac{1-\frac{\tau^{-3}o(1)}{\sum_{j=1}^N \iint_{|y'|<\delta} e^{-2\tau l_j(y')} dy'} }{ 1 +\frac{\tau^{-3}o(1)}{\sum_{j=1}^N \iint_{|y'|<\delta} e^{-2\tau l_j(y')} dy'}} 
=C\tau^2 \ \ (\tau \gg 1).
\end{split}
\]
\item
The proof of this part is similar to (1.).
\item
Using the H{\"o}lder inequality with exponent $\tilde q=\frac{2}{q}>1,$ we have
\[
 \sum_{j=1}^N \iint_{|y'|<\delta} e^{-q\tau l_j(y')} dy'\leq C [\sum_{j=1}^N \iint_{|y'|<\delta} e^{-2\tau l_j(y')} dy']^{\frac{q}{2}}.
\]
Therefore
\[
 \begin{split}
 \frac{\Vert\nabla v\Vert_{L^q(\Omega)}^{2}}{\Vert v\Vert_{L^2(D)}^{2}}
& \leq \frac{[C\tau^{2q-1}\sum_{j=1}^N \iint_{|y'|<\delta} e^{-q\tau l_j(y')} dy' + \tau^{-\frac{2-q}{2}}o(1) ]^{\frac{2}{q}}}
{[C\tau\sum_{j=1}^N \iint_{|y'|<\delta} e^{-2\tau l_j(y')} dy' - \tau^{-2}o(1)]} \\
& \leq \frac{C\tau^{(2q-1)\frac{2}{q}}\sum_{j=1}^N \iint_{|y'|<\delta} e^{-2\tau l_j(y')} dy' + \tau^{-\frac{2-q}{q}}o(1)}
{C\tau\sum_{j=1}^N \iint_{|y'|<\delta} e^{-2\tau l_j(y')} dy'- \tau^{-2}o(1)} 
 \leq C\tau^{3-\frac{2}{q}}, \ \ (\tau \gg 1). 
\end{split}
\]
\end{enumerate}
Note that the proof of the logarithmic case is similar, the only change is to replace $l_j$ by $L_j$.
\end{proof}
\end{section}
\subsection{\textbf{Estimating the CGOs for $I_{ps}$ and $I_{sp}$}}
\ ~ \
\vspace{1mm}
\begin{lemma}\label{analysis_lin}
 Consider $u_s$ to be the $s$-part of the CGOs with linear and logarithmic phases, introduced
in Proposition \ref{CGO_lin}. Then we have the following estimates for the linear phase: For $1\leq q \leq 2$, we have
\begin{enumerate}
\item 
$
\int_D |u_s|^q dx \leq C\tau^{2q-1} \sum_{j=1}^N \iint_{|y'|<\delta} e^{-q\tau l_j(y')} dy'  + \tau^{-q-\frac{2-q}{2}}o(1),
$
\item
$
\int_D |u_s|^2 dx \geq C\tau^3 \sum_{j=1}^N \iint_{|y'|<\delta} e^{-2\tau l_j(y')} dy' -\tau^{-2}o(1),
$
\item 
$
\int_D |\nabla u_s|^q dx \leq C\tau^{3q-1} \sum_{j=1}^N \iint_{|y'|<\delta} e^{-q\tau l_j(y')} dy'+ \tau^{-\frac{2-q}{2}}o(1),
$
\item
$
\int_D |\nabla u_s|^2 dx \geq C\tau^5 \sum_{j=1}^N \iint_{|y'|<\delta} e^{-2\tau l_j(y')} dy' - o(1).
$
\end{enumerate}
In the logarithmic case, the all above estimates hold replacing $l_j$ in each estimate by $L_j$.
\end{lemma}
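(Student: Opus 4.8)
The plan is to imitate the structure of the proof of Lemma~\ref{ne1}, Case~2 (the $s$-part estimates), but now using the improved three-term CGO expansion \eqref{monro} (respectively \eqref{meri}) together with the sharper remainder bounds \eqref{try} (and \eqref{korc} for the second-derivative control), and the integral estimates of Lemma~\ref{Remainder_term} (respectively Lemma~\ref{Remainder_term_log}). First I would write $u_s$ explicitly from \eqref{ls}, i.e. $u_s = -\kappa_s^{-2}\mu_0^{-1/2}[\nabla(\nabla\cdot w)-\Delta w]$, and substitute $w = e^{-\tau(\varphi+i\psi)}(a_0+\tau^{-1}a_1+\tau^{-2}a_2+r)$ with $\varphi = -x\cdot\rho$, $\psi = -x\cdot\rho^\perp$. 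Each derivative hitting the exponential produces a factor $\tau(\nabla\varphi+i\nabla\psi) = -\tau(\rho+i\rho^\perp)$, so the leading term of $u_s$ is of order $\tau^2$: explicitly the coefficient of $e^{-\tau(\varphi+i\psi)}$ at order $\tau^2$ is $-\kappa_s^{-2}\mu_0^{-1/2}[(\rho+i\rho^\perp)((\rho+i\rho^\perp)\cdot a_0) - |\rho+i\rho^\perp|^2 a_0]$, and since $|\rho+i\rho^\perp|^2 = \rho\cdot\rho - \rho^\perp\cdot\rho^\perp + 2i\rho\cdot\rho^\perp = 0$ this equals $-\kappa_s^{-2}\mu_0^{-1/2}(\rho+i\rho^\perp)((\rho+i\rho^\perp)\cdot a_0)$, which is nonzero for generic $a_0$ (one checks $(\rho+i\rho^\perp)\cdot a_0\neq 0$ by the same eikonal-nondegeneracy reasoning used for the $\beta$-choice in Lemma~\ref{ne1}). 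Thus $|u_s|^2$ has a genuine $\tau^4$-size principal part multiplying $e^{-2\tau\varphi}=e^{-2\tau(h_D(\rho)-x\cdot\rho)}$; similarly $|\nabla u_s|^2$ acquires an extra $\tau^2$ from the outer derivative and is of size $\tau^6$. Wait — this needs care: the statement claims $\int_D|u_s|^2\gtrsim\tau^3\sum_j\iint e^{-2\tau l_j}$ and $\int_D|\nabla u_s|^2\gtrsim\tau^5\sum_j\iint e^{-2\tau l_j}$, which is exactly $\tau^4$ and $\tau^6$ pointwise, dropped by one power of $\tau$ after the $y_3$-integration $\int_{l_j(y')}^\delta e^{-2\tau y_3}\,dy_3 \approx \tau^{-1}e^{-2\tau l_j(y')}$ that produces the boundary-layer sum. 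So the pointwise orders are consistent; good.

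The execution then proceeds in the four standard sub-steps. For the upper bounds (1) and (3): expand $|u_s|^q$ (resp.\ $|\nabla u_s|^q$) by the triangle inequality into the leading $\tau^{2q}$-term (resp.\ $\tau^{3q}$) times $e^{-q\tau\varphi}$ plus strictly lower-order terms in $\tau$ that carry the remainder pieces $r,\nabla r,\nabla\nabla r$ (and for $\nabla u_s$ one also meets $\nabla\nabla\nabla r$-type terms controlled through $\nabla(\Delta r)$ via Lemma~\ref{Remainder_term}(4) combined with $\nabla(\nabla\cdot r)=\nabla\div r$ and the identity $\nabla\nabla\cdot - \Delta = \curl\curl$, so that only $\nabla r,\nabla\nabla r,\nabla\Delta r$ actually appear, never a raw $\nabla^3 r$). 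Integrating the leading term over $D$, splitting $D = D_\delta\cup(D\setminus D_\delta)$, flattening near each $\alpha_j$ by the change of coordinates \eqref{change_coordi} and performing the elementary $y_3$-integral gives $C\tau^{2q-1}\sum_j\iint_{|y'|<\delta}e^{-q\tau l_j(y')}dy'$; the remainder contributions are absorbed into $\tau^{-q-(2-q)/2}o(1)$ (resp.\ $\tau^{-(2-q)/2}o(1)$) by Lemma~\ref{Remainder_term}(1)--(4). For the lower bounds (2) and (4): apply Cauchy's $\epsilon$-inequality with $\epsilon=\tfrac12$ to isolate half the square of the $\tau^2$- (resp.\ $\tau^3$-) leading term minus the squares of all lower-order terms, exactly as in \eqref{logpp}; the leading square integrates to $c\,\tau^4\int_D e^{-2\tau\varphi}$ (resp.\ $c\,\tau^6\int_D e^{-2\tau\varphi}$), and by the boundary-layer lower bound \eqref{cincro}-type estimate this is $\geq C\tau^3\sum_j\iint e^{-2\tau l_j(y')}dy'$ (resp.\ $\geq C\tau^5\sum_j\iint e^{-2\tau l_j(y')}dy'$), while the subtracted lower-order terms are controlled by Lemma~\ref{Remainder_term} to be $\tau^{-2}o(1)$ (resp.\ $o(1)$) relative to the dominant term, using \eqref{low_v} to see that $\sum_j\iint e^{-2\tau l_j(y')}dy'\geq C\tau^{-2}$ so that the dominant term really dominates.

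The logarithmic case is handled verbatim with $\varphi(x)=\log|x-x_0|$, $\psi=d_{\mathbb S^2}(\tfrac{x-x_0}{|x-x_0|},\omega_0)$, the change of coordinates \eqref{change_coordi_log}, $l_j$ replaced by $L_j$, and Lemma~\ref{Remainder_term_log} in place of Lemma~\ref{Remainder_term}; the only extra point, already settled in the proof of Lemma~\ref{ne1}(i) for the $I_{ss}$ case, is that the eikonal gradients $\nabla\varphi+i\nabla\psi$ do not degenerate on $\overline D$ — here this forces the leading coefficient $(\nabla\varphi+i\nabla\psi)((\nabla\varphi+i\nabla\psi)\cdot a_0)$ of $u_s$ to stay bounded below on $D$, which holds once $a_0$ is taken as in \eqref{solCR} with $(\beta_1,\beta_2,\beta_3)=\omega_0$. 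The main obstacle I anticipate is purely bookkeeping: verifying that after expanding $u_s$ and $\nabla u_s$ the \emph{only} remainder derivatives that occur are those for which Lemma~\ref{Remainder_term} supplies a bound (i.e.\ up to $\nabla\nabla r$ and $\nabla\Delta r$, never $\nabla^3 r$), and that every lower-order term is genuinely lower-order in $\tau$ relative to the claimed dominant term after the $D_\delta$-localization — both follow from the structure of \eqref{ls} and the semiclassical estimates, but require care to state cleanly.
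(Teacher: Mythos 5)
Your proposal is correct and follows essentially the same route as the paper: the paper also expands $u_s=\kappa_s^{-2}\mu_0^{-1/2}[\nabla(\nabla\cdot w)-\Delta w]$ pointwise (this bookkeeping is carried out in the Appendix, yielding \eqref{sne1} and \eqref{upper_q1}), isolates the $\tau^2(\nabla(\varphi+i\psi))((\nabla(\varphi+i\psi))\cdot a_0)$ leading term via Cauchy's $\epsilon$-inequality, absorbs the remainder contributions through Lemma \ref{Remainder_term} (resp.\ Lemma \ref{Remainder_term_log}), and converts $\int_D e^{-q\tau\varphi}$ into the boundary-layer sums via \eqref{change_coordi} and \eqref{cincro}/\eqref{lll1}. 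One small caveat: your claim that ``never a raw $\nabla^3 r$'' appears is not literally true (the expansion of $\nabla u_s$ produces terms like $\nabla\partial_{x_j}(\nabla\cdot r)$), but this causes no gap since \eqref{korc} bounds $\|\nabla R\|_{H^2(\Omega_0)}$, i.e.\ all third derivatives of $r$ on a neighbourhood of $D$, which is exactly how the paper controls them.
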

 \begin{proof}
We give the proofs for the points (1) and (2). The proof of (3) and (4) are similar.
\begin{enumerate}
 \item Assume that $\varphi$ be the linear phase, i.e $\varphi(x)=t-x\cdot\rho, t>0$.
Now, for $t=h_D(\rho),$ for the $\tau\gg1,$ the estimate \eqref{upper_q1} in Appendix together with the behavior of the remainder term in Lemma \ref{Remainder_term},
gives
\[
 \begin{split}
  \int_D \vert u_s\vert^q dx 
&\leq C\tau^{2q}\int_D e^{-q\tau(h_D(\rho)-x\cdot\rho)}dx + \tau^{-q-\frac{2-q}{q}}o(1)\\
&\leq C\tau^{2q-1}\sum_{j=1}^N \iint_{|y'|<\delta }e^{-q\tau l_j(y')}dy' + \tau^{-q-\frac{2-q}{q}}o(1)+ \ \text{exponentially decaying terms}.
 \end{split}
\]
Note that the last inequality follows as in \eqref{lll1}.
Similarly, applying Lemma \ref{Remainder_term_log} and the estimate \eqref{upper_q1} in Appendix, we prove the required estimate for the case of logarithmic phase.
\item
\item[(i)](Linear case.)
Consider $\varphi$ to be the linear phase. From the estimate \eqref{sne1} see Appendix and Lemma \ref{Remainder_term}, we have, for $\tau\gg1,$
\begin{equation}\label{mi}
\int_D|u_s|^2dx
 \geq C\tau^4\int_De^{-2\tau(h_D(\rho)-x\cdot\rho)}|a_0|^2 -\tau^{-2}o(1).
\end{equation}
Recall that $a_0= \rho$, then $|a_0|=|\rho|=1.$ Therefore for large $\tau$, from \eqref{mi} and \eqref{cincro} we obtain
\[
 \begin{split}
\int_D|u_s|^2dx
& \geq  C\tau^4\int_De^{-2\tau(h_D(\rho)-x\cdot\rho)}dx -\tau^{-2}o(1) 
\geq C\tau^4\int_{D_{\delta} } e^{-2\tau (h_D(\rho)-x\cdot \rho)} dx-\tau^{-2}o(1) \\
&\geq C\tau^4\sum_{j=1}^N \iint_{|y'|<\delta }dy' \int_{l_j(y')}^{\delta } e^{-2\tau y_3} dy_3 -\tau^{-2}o(1)+ \ \text{exponentially decaying terms} \\
&=C\tau^3\sum_{j=1}^N \iint_{|y'|<\delta }e^{-2\tau l_j(y')}dy' -\tau^{-2}o(1)+ \ \text{exponentially decaying terms}.
 \end{split}
\]
\item[(ii)](Logarithmic case.)
Consider $\varphi(x) = \log|x-x_0|$.
Recall that the support function has the following form 
$
 d_D(x_0) := \inf_{x\in D}\log|x-x_0|. \ (x_0 \in \mathbb{R}^3\setminus\overline{ch(\Omega)}).
$
From \eqref{sne1} in Appendix and Lemma \ref{Remainder_term_log}, we have, for $\tau\gg1,$
\begin{equation}\label{mim}
\int_D|u_s|^2dx
 \geq C\tau^4\int_De^{-2\tau(d_D(x_0)-\log|x-x_0|)}|a_0|^2 -\tau^{-2}o(1).
\end{equation}
 Since $|a_0|>c_0>0$ in $D$, see \eqref{solCR}, therefore \eqref{mim} reduces to
\[
  \begin{split}
\int_D|u_s|^2dx
& \geq C\tau^4\int_De^{-2\tau(d_D(x_0)-\log|x-x_0|)}dx -\tau^{-2}o(1)\\
&\geq C\tau^3\sum_{j=1}^N \iint_{|y'|<\delta }e^{-2\tau L_j(y')}dy'-\tau^{-2}o(1) + \ \text{exponentially decaying terms}.
\end{split}
\]
Note that the last inequality follows from \eqref{cincro}.
\end{enumerate}
\end{proof}
\begin{lemma}\label{linear_u}
Let $u$ be the CGO solution constructed in Proposition \ref{CGO_lin}. If $u$ has a linear phase then we have
\[ 
\Vert \nabla u\Vert_{L^2(D)}^{2} \leq C\tau^3\sum_{j = 1}^{N}\iint_{|y'|<\delta}e^{-2\tau l_j(y')}dy' + \text{exponentially decaying term}.
\]
The estimate is true for the CGOs with logarithmic phase as well replacing $l_j$ by $L_j$.
\end{lemma}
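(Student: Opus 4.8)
The statement is an upper bound, so unlike the lower bounds in the theorems it only requires dominating every term; the plan is to substitute the explicit WKB representation of the CGO from Proposition~\ref{CGO_lin} into $\nabla u$, isolate the contribution of highest order in $\tau$, and then integrate. Recall that $u=\mu_{0}^{-1/2}w+\mu_{0}^{-1}\nabla g$ with $w,g$ given by \eqref{monro}, so that $\nabla u=\mu_{0}^{-1/2}\nabla w+\mu_{0}^{-1}\nabla\nabla g$. Writing $\Phi(x):=(x\cdot\rho-t)+i\,x\cdot\rho^{\bot}$, which is linear so that $\nabla\Phi=\rho+i\rho^{\bot}$ is constant (and $|\nabla\Phi|^{2}=2$), one differentiates \eqref{monro} directly. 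The term of highest order in $\tau$ lies in $\nabla\nabla g$ and equals $\tau^{2}e^{\tau\Phi}(\nabla\Phi\otimes\nabla\Phi)b_{0}$, of size $O(\tau^{2})$, while $\nabla w$ reaches only order $\tau$; the remaining smooth terms are of order $\tau$ and $\tau^{0}$, and the terms carrying the remainder $R=(r,s)^{\top}$ are $\tau\,r$ and $\nabla r$ from $\nabla w$, together with $\tau^{2}s$, $\tau\,\nabla s$ and $\nabla\nabla s$ from $\nabla\nabla g$. Collecting everything and bounding the smooth coefficients by their $L^{\infty}(\overline{\Omega})$-norms, I would record, with $t=h_{D}(\rho)$ (a general $t$ only introduces the overall factor $e^{2\tau(h_{D}(\rho)-t)}$, which cancels in the way the lemma is used), the pointwise bound
\[
|\nabla u|^{2}\leq C\,e^{-2\tau(h_{D}(\rho)-x\cdot\rho)}\Big[\tau^{4}+\tau^{4}\big(|r|^{2}+|s|^{2}\big)+\tau^{2}\big(|\nabla r|^{2}+|\nabla s|^{2}\big)+|\nabla\nabla s|^{2}\Big],\qquad\tau\gg1 .
\]

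Next I would integrate this inequality over $D$. For the smooth part $\tau^{4}\int_{D}e^{-2\tau(h_{D}(\rho)-x\cdot\rho)}dx$, I split $D=D_{\delta}\cup(D\setminus D_{\delta})$: on $D\setminus D_{\delta}$ one has $h_{D}(\rho)-x\cdot\rho\geq c\delta$, giving an $O(\tau^{4}e^{-2c\delta\tau})$ contribution, i.e.\ an exponentially decaying term; on each $D_{j,\delta}$, the change of coordinates \eqref{change_coordi} together with $\int_{l_{j}(y')}^{\delta}e^{-2\tau y_{3}}\,dy_{3}=\tfrac{1}{2\tau}(e^{-2\tau l_{j}(y')}-e^{-2\tau\delta})$ yields, after summing over $j$, the term $\tfrac{\tau^{3}}{2}\sum_{j=1}^{N}\iint_{|y'|<\delta}e^{-2\tau l_{j}(y')}\,dy'$ plus exponentially decaying terms. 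For the remainder contributions I would invoke Lemma~\ref{Remainder_term} with $q=2$: points (1)--(3), applied to $r$ and, by the last sentence of that lemma, to $s$, give $\int_{D}e^{-2\tau(\cdots)}(|r|^{2}+|s|^{2})\,dx=O(\tau^{-6})$, $\int_{D}e^{-2\tau(\cdots)}(|\nabla r|^{2}+|\nabla s|^{2})\,dx=O(\tau^{-4})$ and $\int_{D}e^{-2\tau(\cdots)}|\nabla\nabla s|^{2}\,dx=O(\tau^{-2})$, so after multiplying by $\tau^{4}$, $\tau^{2}$ and $1$ respectively each of these is $O(\tau^{-2})$. Finally, by \eqref{low_v} one has $\sum_{j}\iint_{|y'|<\delta}e^{-2\tau l_{j}(y')}\,dy'\geq C\tau^{-2}$, hence $\tau^{3}\sum_{j}\iint_{|y'|<\delta}e^{-2\tau l_{j}(y')}\,dy'\geq C\tau\to\infty$, and so for $\tau\gg1$ all the $O(\tau^{-2})$ pieces are absorbed into $C\tau^{3}\sum_{j}\iint_{|y'|<\delta}e^{-2\tau l_{j}(y')}\,dy'$; assembling the three estimates gives the claimed bound.

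For the logarithmic phase the argument is verbatim the same, replacing $\varphi(x)=t-x\cdot\rho$ by $\varphi(x)=\log|x-x_{0}|$, $h_{D}(\rho)$ by $d_{D}(x_{0})$, the change of coordinates \eqref{change_coordi} by \eqref{change_coordi_log}, Lemma~\ref{Remainder_term} by Lemma~\ref{Remainder_term_log}, and $l_{j}$ by $L_{j}$. I do not expect a genuine obstacle here beyond careful bookkeeping of the powers of $\tau$: the one place where the refined construction of Proposition~\ref{CGO_lin} is really needed is that $\nabla u$ contains $\nabla\nabla g$, hence the second derivatives $\nabla\nabla s$ of the remainder, and it is precisely the sharpened estimate $\|\partial^{\alpha}R\|_{L^{2}(\Omega)}\leq C\tau^{|\alpha|-3}$ for $|\alpha|\leq2$ from \eqref{try} --- rather than the classical $\tau^{|\alpha|-1}$ --- that keeps these contributions strictly below the $\tau^{3}$ threshold.
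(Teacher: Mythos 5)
Your proposal is correct and follows essentially the same route as the paper's own (much more condensed) proof: bound $\nabla u$ by $\nabla w$ and $\nabla\nabla g$, dominate the smooth WKB part by $C\tau^4 e^{-2\tau(h_D(\rho)-x\cdot\rho)}$, control the remainder contributions via Lemma~\ref{Remainder_term} (resp.\ Lemma~\ref{Remainder_term_log}), and convert the integral to $C\tau^{3}\sum_{j}\iint_{|y'|<\delta}e^{-2\tau l_j(y')}dy'$ plus exponentially decaying terms by the change of coordinates \eqref{change_coordi}. Your explicit absorption of the $O(\tau^{-2})$ remainder pieces using \eqref{low_v} simply spells out what the paper leaves implicit.
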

\begin{proof}
 Let us recall that the CGO solution for the elasticity model is of the form $u = \mu_{0}^{-\frac{1}{2}}w + \mu_{0}^{-1}\nabla g,$ where $w$ and $g$ are defined as in Proposition \ref{CGO_lin}.
Considering the behavior of the remainder term from Lemma \ref{Remainder_term} and doing the same analysis as given in the proof of Lemma \ref{analysis_lin}, we deduce that
\[
 \begin{split}
  \|\nabla u\|_{L^2(D)}^{2} 
& \leq C \left[\|\nabla w\|_{L^2(D)}^{2} + \|\nabla\nabla g\|_{L^2(D)}^{2} \right] 
 \leq C\tau^4\int_D e^{-2\tau(h_D(\rho)-x\cdot\rho)} dx \\
& \leq C\tau^3\sum_{j = 1}^{N}\iint_{|y'|<\delta}e^{-2\tau l_j(y')}dy' + \text{exponentially decaying term}.
 \end{split}
\]
\end{proof}
\begin{lemma}\label{end_lem}
Let $u$ be the CGO solution constructed in Proposition \ref{CGO_lin} and $u_s$ be its $s$-part.
 Then we have the following estimates:
\begin{enumerate}
 \item 
$
\frac{\|\nabla u\|_{L^2(D)}^{2}}{\|\nabla u_s\|_{L^2(D)}^{2}} \leq C \tau^{-2} \ \text{and} \
 \frac{\|u_s\|_{L^2(D)}^{2}}{\|\nabla u_s\|_{L^2(D)}^{2}} \leq C \tau^{-2}.
$
\item
For $q_0<q< 2,$
$
 \frac{\|\nabla u_s\|_{L^q(D)}^{2}}{\|\nabla u_s\|_{L^2(D)}^{2}} \leq C \tau^{(1-\frac{2}{q})}, \ (\tau\rightarrow\infty).
$
\end{enumerate}
\end{lemma}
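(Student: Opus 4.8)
The plan is to reduce the statement to three facts already available in the excerpt: the two-sided $\tau$-power estimates on $\|u_s\|_{L^q(D)}$ and $\|\nabla u_s\|_{L^q(D)}$ from Lemma \ref{analysis_lin}, the upper bound on $\|\nabla u\|_{L^2(D)}^2$ from Lemma \ref{linear_u}, and the Lipschitz lower bound $\Sigma(\tau):=\sum_{j=1}^N\iint_{|y'|<\delta}e^{-2\tau l_j(y')}\,dy'\ge C\tau^{-2}$ valid for $\tau\gg1$, which is exactly \eqref{low_v}. All estimates below are written in the linear-phase normalization; the logarithmic case follows verbatim after replacing $l_j$ by $L_j$.

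First I would clean up the inputs by absorbing the $o(1)$ and exponentially small remainders. Since $\tau^5\Sigma(\tau)\ge C\tau^3\to\infty$, the $-o(1)$ in Lemma \ref{analysis_lin}(4) is negligible and $\|\nabla u_s\|_{L^2(D)}^2\ge C\tau^5\Sigma(\tau)$ for $\tau\gg1$; similarly $\tau^3\Sigma(\tau)\ge C\tau$ dominates the remainders in Lemma \ref{analysis_lin}(1) with $q=2$ and in Lemma \ref{linear_u}, so $\|u_s\|_{L^2(D)}^2\le C\tau^3\Sigma(\tau)$ and $\|\nabla u\|_{L^2(D)}^2\le C\tau^3\Sigma(\tau)$ for $\tau\gg1$. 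Dividing these yields both inequalities of part (1): $\|\nabla u\|_{L^2(D)}^2/\|\nabla u_s\|_{L^2(D)}^2\le C\tau^{3}\Sigma(\tau)/(C\tau^{5}\Sigma(\tau))=C\tau^{-2}$, and identically $\|u_s\|_{L^2(D)}^2/\|\nabla u_s\|_{L^2(D)}^2\le C\tau^{-2}$.

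For part (2) I would first feed Lemma \ref{analysis_lin}(3) through H{\"o}lder's inequality with exponent $2/q>1$, exactly as in the proof of Lemma \ref{ne2}(3): $\sum_{j}\iint_{|y'|<\delta}e^{-q\tau l_j(y')}\,dy'\le C\,\Sigma(\tau)^{q/2}$, whence $\int_D|\nabla u_s|^q\,dx\le C\tau^{3q-1}\Sigma(\tau)^{q/2}+\tau^{-(2-q)/2}o(1)$. Raising to the power $2/q$ and using $(a+b)^{2/q}\le C(a^{2/q}+b^{2/q})$ gives $\|\nabla u_s\|_{L^q(D)}^2\le C\tau^{6-2/q}\Sigma(\tau)+\tau^{-(2-q)/q}o(1)$. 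Dividing by $\|\nabla u_s\|_{L^2(D)}^2\ge C\tau^5\Sigma(\tau)$ and using $\Sigma(\tau)\ge C\tau^{-2}$ once more to absorb the remainder, the ratio is $\le C\tau^{(6-2/q)-5}=C\tau^{1-2/q}$ for $\tau\gg1$, which is the claim.

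The computations are routine once the correct inputs are isolated; the only point demanding care is the bookkeeping of the $o(1)$ error terms. At each division one must check that the leading term --- which, after invoking the Lipschitz bound $\Sigma(\tau)\ge C\tau^{-2}$, is a genuine positive power of $\tau$ --- really dominates the remainder, and that raising the $L^q$ estimate to the power $2/q$ does not destroy this domination. No idea beyond Lemma \ref{analysis_lin}, Lemma \ref{linear_u} and \eqref{low_v} is needed.
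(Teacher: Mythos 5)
Your proposal is correct and follows essentially the same route as the paper: the paper's proof of this lemma simply invokes Lemma \ref{analysis_lin} and Lemma \ref{linear_u}, and the bookkeeping you supply (absorbing the $o(1)$ remainders via \eqref{low_v} and applying H\"older's inequality to pass from $\sum_j\iint e^{-q\tau l_j}$ to $\bigl(\sum_j\iint e^{-2\tau l_j}\bigr)^{q/2}$) is exactly the computation the paper carries out in the analogous Lemma \ref{ne2}.
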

\begin{proof}
 The proof follows from Lemma \ref{analysis_lin} and Lemma \ref{linear_u}.
\end{proof}
\begin{section}{Appendix}
In this section, we are only concerned with the CGOs constructed in Proposition \ref{CGO_lin}.
 Let $\varphi + i\psi$ be the phase function, where $\varphi$ is either linear or logarithmic. Recall that 
the $s$-part of the CGO solution is
\begin{equation}\label{cris}
 u_s = \kappa_{s}^{-2}\mu_{0}^{-\frac{1}{2}}\left[\nabla(\nabla\cdot w) - \Delta w\right]
\end{equation}
where $ w = e^{-\tau(\varphi+i\psi}(a_0 + \tau^{-1}a_1 + \tau^{-2}a_2 + r)$. 
Writing \eqref{cris} explicitly we obtain
$u_s = (u_{s}^{1}, ~u_{s}^{2}, ~u_{s}^{3}) ^\top$
where 
$
 u_{s}^{j} := \kappa_{s}^{-2}\mu_{0}^{-\frac{1}{2}}e^{-\tau(\varphi+i\psi)}\left[ I - II\right]
$
with
 \begin{eqnarray}\label{I}
\begin{split}
 & I := \left[\tau^2(\nabla(\varphi+i\psi))\cdot(a_0 + \tau^{-1}a_1 + \tau^{-2}a_2 + r)-\tau\nabla\cdot(a_0 + \tau^{-1}a_1 + \tau^{-2}a_2 + r)\right]\frac{\partial}{\partial x_j}(\varphi+i\psi) \\
 & + \frac{\partial}{\partial x_j}\left[ -\tau(\nabla(\varphi+i\psi))\cdot(a_0 + \tau^{-1}a_1 + \tau^{-2}a_2 + r) + \nabla\cdot(a_0 + \tau^{-1}a_1 + \tau^{-2}a_2 + r)\right], \ \text{and}\
\end{split}
\\ \label{II}
 \begin{split}
  & II := -\tau(\Delta(\varphi+i\psi))(a_{0}^{j} + \tau^{-1}a_{1}^{j} + \tau^{-2}a_{2}^{j} + r^j) - 2\tau(\nabla(\varphi+i\psi))\cdot\nabla(a_{0}^{j} + \tau^{-1}a_{1}^{j} + \tau^{-2}a_{2}^{j} + r^j) \\
 & + (\Delta a_{0}^{j} + \tau^{-1}\Delta a_{1}^{j} + \tau^{-2}\Delta a_{2}^{j} + \Delta r^j).
 \end{split}
\end{eqnarray}
Here $a_{0}^{j}, a_{1}^{j}, a_{2}^{j}$ and $r^j$ are the $j^{th}$ component of $a_0, a_1, a_2$ and $r$ respectively, $j = 1, 2, 3.$ 
\subsection{Estimating the lower bound for $\Vert u_s\Vert_{L^2(D)}$}
Applying Cauchy's $\epsilon$ inequality with $\epsilon = \frac{1}{2}$, we have
$
 \vert u_{s}^{j}\vert^2 \geq \kappa_{s}^{-4}\mu_{0}^{-1}e^{-2\tau\varphi}( \frac{1}{2} \vert I\vert^2 - \vert II\vert^2).
$
To get the lower estimate for $\vert I\vert^2$, using again Cauchy's $\epsilon$ inequality with $\epsilon = \frac{1}{2}$, we have
\begin{equation}\label{epsilon_1}
 \begin{split}
&\sum_{j = 1}^{3}\int_D e^{-2\tau\varphi} \vert I\vert^2  \geq \\
&\frac{1}{2}\sum_{j = 1}^{3}\int_D e^{-2\tau\varphi}\vert\tau^2(\nabla(\varphi+i\psi)\cdot a_0)\frac{\partial}{\partial x_j}(\varphi+i\psi)\vert^2 -\sum_{j = 1}^{3}\int_D e^{-2\tau\varphi}\{c_1\vert\tau^2(\nabla(\varphi+i\psi)\cdot\tau^{-1} a_1)\frac{\partial}{\partial x_j}(\varphi+i\psi)\vert^2 \\
& + c_2 \vert\tau^2(\nabla(\varphi+i\psi)\cdot\tau^{-2} a_2)\frac{\partial}{\partial x_j}(\varphi+i\psi)\vert^2 + c_3\vert\tau^2(\nabla(\varphi+i\psi)\cdot r)\frac{\partial}{\partial x_j}(\varphi+i\psi)\vert^2\}\\
& - C \sum_{j = 1}^{3}\int_D e^{-2\tau\varphi}\{\vert\tau(\nabla\cdot a_0)\frac{\partial}{\partial x_j}(\varphi+i\psi)\vert^2 + \vert\tau(\nabla\cdot\tau^{-1} a_1)\frac{\partial}{\partial x_j}(\varphi+i\psi)\vert^2 
+ \vert\tau(\nabla\cdot\tau^{-2}a_2)\frac{\partial}{\partial x_j}(\varphi+i\psi)\vert^2\} 
\end{split}
\end{equation}
\begin{equation}
\begin{split}
& + \vert\tau(\nabla\cdot r)\frac{\partial}{\partial x_j}(\varphi+i\psi)\vert^2\} 
 - C \{\vert\frac{\partial}{\partial x_j}(-\tau\nabla(\varphi+i\psi)\cdot a_0)\vert^2 + \vert\frac{\partial}{\partial x_j}(-\tau\nabla(\varphi+i\psi)\cdot\tau^{-1} a_1)\vert^2 \\ 
& + \vert\frac{\partial}{\partial x_j}(-\tau\nabla(\varphi+i\psi)\cdot \tau^{-2}a_2)\vert^2 + \vert\frac{\partial}{\partial x_j}(-\tau\nabla(\varphi+i\psi)\cdot r)\vert^2 \\ 
& + \vert\frac{\partial}{\partial x_j}(\nabla\cdot a_0)\vert^2 + \vert\frac{\partial}{\partial x_j}(\nabla\cdot\tau^{-1} a_1)\vert^2 + \vert\frac{\partial}{\partial x_j}(\nabla\cdot\tau^{-2} a_2)\vert^2 
+ \vert\frac{\partial}{\partial x_j}(\nabla\cdot r)\vert^2\}\\
&\geq \int_De^{-2\tau\varphi}\left[\frac{1}{2}\tau^4|a_0|^2-c_1(1+\tau^2+\tau^{-2}+\tau^{-4})-c_2(\tau^4|r|^2+\tau^2|\nabla r|^2+|\nabla\nabla r|^2)\right].
 \end{split}
\end{equation}
Computations for the term $II:$
\begin{equation}\label{epsilon_4}
\begin{split}
& \sum_{j = 1}^{3}\int_D e^{-2\tau\varphi} \vert II\vert^2 
  \leq C \sum_{j = 1}^{3}\int_D e^{-2\tau\varphi} \{\tau^2\vert(\Delta(\varphi+i\psi))a_{0}^{j}\vert^2 + \vert(\Delta(\varphi+i\psi))a_{1}^{j}\vert^2 + \tau^{-2}\vert(\Delta(\varphi+i\psi))a_{2}^{j}\vert^2 \\
& +\tau^2\vert(\Delta(\varphi+i\psi))r^j\vert^2 + \tau^2\vert\nabla(\varphi+i\psi)\cdot\nabla a_{0}^{j}\vert^2 + \vert\nabla(\varphi+i\psi)\cdot\nabla a_{1}^{j}\vert^2
+ \tau^{-2}\vert\nabla(\varphi+i\psi)\cdot\nabla a_{2}^{j}\vert^2 \\
& + \tau^2\vert\nabla(\varphi+i\psi)\cdot\nabla r^j\vert^2  + \vert\Delta a_{0}^{j}\vert^2 + \tau^{-2}\vert\Delta a_{1}^{j}\vert^2 + \tau^{-4}\vert\Delta a_{2}^{j}\vert^2 + \vert\Delta r^j\vert^2\} \\
& \leq C \int_D e^{-2\tau\varphi}\left[ 1 + \tau^2 + \tau^{-2} + \tau^{-4}\right]dx + C \left[\int_D \tau^2 e^{-2\tau\varphi}\vert r\vert^2 + \int_D \tau^2 e^{-2\tau\varphi}\vert\nabla r\vert^2 + \int_D e^{-2\tau\varphi}\vert\Delta r\vert^2 \right].
\end{split}
\end{equation}
Therefore from \eqref{epsilon_1} and \eqref{epsilon_4} we obtain the lower bound of $\Vert u_s\Vert_{L^2(D)}$ as follows:
\begin{equation}\label{sne1}
\begin{split}
\int_D|u_s|^2dx
& \geq c\tau^4\int_De^{-2\tau\varphi}[|a_0|^2-c_1\{\tau^{-2}+\tau^{-4}+\tau^{-6}+\tau^{-8}\}\\
& -c_2\{\tau^{-2}|r|^2+\tau^{-2}|\nabla r|^2+\tau^{-4}|\nabla\nabla r|^2+|r|^2+\tau^{-2}|\nabla\cdot r|^2 \}].\\
 \end{split}
\end{equation}
 \subsection{Estimating the upper bound for $\Vert u_s\Vert_{L^q(D)}$} 
 For $1\leq q \leq 2,$ we have
the following upper estimate:
  \begin{equation}\label{upper_q1}
  \begin{split}
\int_D \vert u_s\vert^q
& \leq C \int_D e^{-q\tau\varphi}[1 + \tau^q +\tau^{2q}+ \tau^{-q} + \tau^{-2q}] \\
& + \int_D e^{-q\tau\varphi}\left[\tau^q\vert r\vert^q +\tau^{2q}\vert r\vert^{q}+ \tau^q\vert\nabla r\vert^q + \tau^q\vert\nabla\cdot r\vert^q +\vert\nabla\nabla r\vert^q + \vert\Delta r\vert^q\right].
\end{split}
\end{equation}
\subsection{Estimating the lower bound for $\Vert\nabla u_s\Vert_{L^2(D)}$} 
We have $(\nabla u_{s}^{1}, ~\nabla u_{s}^{2}, ~\nabla u_{s}^{3}) ^\top $
where, for $j=1,2,3,$
$
\nabla u_{s}^{j} 
  = \kappa_{s}^{-2}\mu_{0}^{-\frac{1}{2}}\left[\nabla e^{-\tau(\varphi+i\psi)}(I - II) + e^{-\tau(\varphi+i\psi)}(\nabla I-\nabla II)\right] 
  =  \kappa_{s}^{-2}\mu_{0}^{-\frac{1}{2}} e^{-\tau(\varphi+i\psi)}[A + B]
$
with $A := -\tau(\nabla\varphi+i\nabla\psi)(I - II),$ $B := \nabla I - \nabla II$ and $I,II$  are defined as in \eqref{I} and \eqref{II} respectively.
Therefore, 
\begin{equation}\label{grad_2}
 \int_D\vert\nabla u_s\vert^2dx
\geq \kappa_{s}^{-4}\mu_{0}^{-1}\int_D e^{-2\tau\varphi}\sum_{j=1}^{3}[\frac{1}{2}\vert A\vert^2 - \vert B\vert^2].
\end{equation}
To get the estimate for the term $A$, we use \eqref{epsilon_1}, \eqref{epsilon_4} and the following estimate
\begin{equation}\label{grad_3}
 \int_De^{-2\tau\varphi}\sum_{j=1}^{3}\vert A\vert^2 \geq \tau^2\int_De^{-2\tau\varphi}\sum_{j=1}^{3}[\frac{1}{2}\vert I\vert^2 - \vert II\vert^2].
\end{equation}
Again we have 
\begin{equation}\label{grad_4}
\begin{split}
 &\int_D e^{-2\tau\varphi}\sum_{j=1}^{3}\vert B\vert^2dx 
 \leq C \int_D e^{-2\tau\varphi}\sum_{j=1}^{3}\left[\vert\nabla I\vert^2 + \vert\nabla II\vert^2\right] 
 \leq C\{\int_D e^{-2\tau\varphi}\left[ 1+ \tau^2 + \tau^4 + \tau^{-2} + \tau^{-4}\right] \\
& + \int_D e^{-2\tau\varphi}\left[ \tau^4\vert\nabla(\nabla(\varphi+i\psi)\cdot r)\vert^2 + \tau^2 \vert\nabla(\nabla\cdot r)\vert^2 + \tau^2\sum_{j=1}^{3}\vert\nabla(\frac{\partial}{\partial x_j}(\nabla(\varphi + i\psi)\cdot r))\vert^2\right] \\
\end{split}
\end{equation}
\begin{equation*}
\begin{split}
& + \int_D e^{-2\tau\varphi}\sum_{j=1}^{3}\left[\vert \nabla(\frac{\partial}{\partial x_j}(\nabla\cdot r))\vert^2 + \tau^2\vert \nabla(r^j\Delta(\varphi+i\psi))\vert^2 +\tau^2\vert\nabla(\nabla(\varphi+i\psi)\cdot \nabla r^j)\vert^2\right] \\
& + \int_D e^{-2\tau\varphi}\sum_{j=1}^{3} \vert \nabla(\Delta r^j)\vert^2\}.
\end{split}
\end{equation*}
Combining \eqref{grad_2}, \eqref{grad_3} and \eqref{grad_4}, we get the lower bound for the gradient term as follows:
\begin{equation*}
 \begin{split}
  \int_D|\nabla u_s|^2dx
& \geq c\tau^6\int_De^{-2\tau\varphi}[|a_0|^2-c_1\{\tau^{-2}+\tau^{-4}+\tau^{-6}+\tau^{-8}+\tau^{-10}\} \\
&-c_2\{\tau^{-2}|r|^2+\tau^{-2}|\nabla r|^2+\tau^{-4}|\Delta r|^2+|r|^2+\tau^{-2}|\nabla\cdot r|^2+\tau^{-4}\sum_{j=1}^{3}|\frac{\partial}{\partial x_j}(\nabla\cdot r)|^2\}\\
& -c_3\{\tau^{-2}|\nabla(\nabla(\varphi+i\psi)\cdot r)|^2+\tau^{-4}|\nabla(\nabla\cdot r)|^2+\tau^{-4}\sum_{j=1}^{3}|\nabla(\frac{\partial}{\partial x_j}(\nabla(\varphi+i\psi)\cdot r))|^2\\ 
&+\tau^{-6}\sum_{j=1}^{3}|\nabla(\frac{\partial}{\partial x_j}(\nabla\cdot r))|^2+\tau^{-4}\sum_{j=1}^{3}|\nabla(r^j(\Delta(\varphi+i\psi)))|^2 \\
&+\tau^{-4}\sum_{j=1}^{3}|\nabla(\nabla(\varphi+i\psi)\cdot\nabla r^j)|^2+\tau^{-6}\sum_{j=1}^{3}|\nabla(\Delta r^j)|^2\}].
 \end{split}
\end{equation*}
 
\subsection{Estimating the upper bound for $\Vert\nabla u_s\Vert_{L^q(D)}$}
For $1\leq q \leq 2,$
\begin{equation}\label{upper_grad}
\begin{split}
& \int_D \vert \nabla u_s\vert^qdx \leq C \int_D e^{-\tau q \varphi}\sum_{j=1}^{3}[\vert A\vert^q + \vert B\vert^q]
 \leq C \int_De^{-\tau q \varphi}[1 + \tau^{q} + \tau^{-q}+ \tau^{2q} + \tau^{-2q}+\tau^{3q}]  \\
& + \tau^q\int_D e^{-\tau q \varphi}[\tau^{2q}\vert\nabla(\varphi +i\psi)\cdot r\vert^q + \tau^q\vert\nabla\cdot r\vert^q + \sum_{j=1}^{3}\tau^q\vert\frac{\partial}{\partial x_j}(\nabla(\varphi+i\psi)\cdot r)\vert^q] \\
& + \tau^q\int_D\sum_{j=1}^{3} e^{-\tau q \varphi}[\vert\frac{\partial}{\partial x_j}(\nabla\cdot r)\vert^q + \tau^q\vert r^j\Delta(\varphi+i\psi)\vert^q + \tau^q\vert\nabla r^j\vert^q + \vert\Delta r^j\vert^q]\\
& + C \int_D e^{-\tau q \varphi}[\tau^{2q}\vert\nabla(\nabla(\varphi+i\psi)\cdot r)\vert^q + \tau^q\vert\nabla(\nabla\cdot r)\vert^q + \tau^q\sum_{j=1}^{3}\vert\nabla(\frac{\partial}{\partial x_j}(\nabla(\varphi+i\psi)\cdot r))\vert^q] \\
& + C \int_D e^{-\tau q \varphi}\sum_{j=1}^{3}[\nabla(\frac{\partial}{\partial x_j}(\nabla\cdot r)) + \tau^q\vert\nabla(r^j\Delta(\varphi+i\psi))\vert^q + \tau\vert\nabla(\nabla(\varphi+i\psi)\cdot\nabla r^j)\vert^q + \vert\nabla(\Delta r^j)\vert^q].
\end{split}
\end{equation}
By simplifying, we get
\begin{equation}\label{lasteq1}
 \begin{split}
  \int_D|\nabla u_s|^qdx
& \leq c\tau^{3q}\int_De^{-q\tau\varphi}[1+\tau^{-q}+\tau^{-2q}+\tau^{-3q}+\tau^{-4q}+\tau^{-5q}\\
& +c_1\{(1+\tau^{-q})|r|^q+\tau^{-q}|\nabla r|^q+(\tau^{-2q}+\tau^{-3q})|\nabla\nabla r|^q+\tau^{-3q}|\nabla(\Delta r)|^q\}].
 \end{split}
\end{equation}
\end{section}

\end{document}